\documentclass[12pt,a4paper]{article}%
\usepackage{amssymb}
\usepackage{amsmath}
\usepackage{amsfonts}
\usepackage[dvips]{graphicx}
\usepackage{mathrsfs}
\usepackage[pdftex]{hyperref}
\usepackage{pdfpages}%
\usepackage[all]{xy}
\setcounter{MaxMatrixCols}{30}
\providecommand{\U}[1]{\protect\rule{.1in}{.1in}}
\hypersetup{
	colorlinks=true,
	citecolor = blue
}
\newtheorem{theorem}{Theorem}

\newtheorem{corollary}[theorem]{Corollary}

\newtheorem{definition}[theorem]{Definition}

\newtheorem{lemma}[theorem]{Lemma}

\newtheorem{proposition}[theorem]{Proposition}
\newtheorem{remark}[theorem]{Remark}

\newenvironment{proof}[1][Proof]{\noindent\textbf{#1.} }{\ \rule{0.5em}{0.5em}}

\textwidth 165mm
\textheight 230.7mm
\oddsidemargin=0mm
\evensidemargin=0mm
\addtolength{\topmargin}{-1in}
\begin{document}
	
	\author{V\'{\i}ctor A. Vicente-Ben\'{\i}{}tez\\{\small Instituto de Matemáticas de la U.N.A.M. Campus Juriquilla}\\{\small Boulevard Juriquilla 3001, Juriquilla, Querétaro C.P. 076230 M\'{e}xico }\\ {\small  va.vicentebenitez@im.unam.mx } }
	\title{Complete systems of solutions, transmutations and Darboux transform for Sturm-Liouville equations in impedance form}
	\date{}
	\maketitle

	\begin{abstract}
		We present the construction of a complete system of functions associated with the Sturm–Liouville equation in impedance form on a finite interval $I$, given an impedance function $a\in L^2(I)$. The system, known as the {\it formal powers}, is generated through recursive integration of the impedance function $a$ and its reciprocal. We establish the completeness of this system in the space $L^p$ with the weight function $a^2$. Under additional conditions on $a$, we extend this completeness to Sobolev spaces $ W^ {1,p}(I)$, along with a generalized Taylor formula. 
		
		We show that the completeness of the formal powers implies key analytic properties for a transmutation operator associated with the Sturm-Liouville equation in impedance form, including the existence of a continuous inverse. Finally, we introduce a formulation of the Darboux-transformed equation and establish a relation between the transmutation operators to the original and transformed equations.
	\end{abstract}
	
	\textbf{Keywords: } Complete system of functions; Sturm-Liouville equation in impedance form; Transmutation operators; Darboux transform.
	\newline 
	
	\textbf{MSC Classification:} 34B24; 34A25; 34L40; 41A30; 42A65; 47G20.
	
	\section{Introduction}
	
	In this work, we present the construction of a complete system of functions associated with the Sturm-Liouville equation in the form
	\begin{equation}\label{eq:impedanceintro}
		-\frac{d}{dx}\left(a^2(x)\frac{du(x)}{dx}\right)=\lambda a^2(x)u(x),\quad x\in I, \lambda\in \mathbb{C},
	\end{equation}
	where $I\subset \mathbb{R}$ is a finite interval, $\lambda$ is called the {\it spectral parameter} of Eq. \eqref{eq:impedanceintro}, and the coefficient $a$ is referred to as the {\it impedance function}. We assume that $a\in L^2(I)$ is positive and satisfies $\frac{1}{a}\in L^2(I)$, such functions are often called \textit{proper impedance functions}. A general method for solving Sturm–Liouville equations was introduced, known as the spectral parameter power series (SPPS) method. This approach was initially developed for Sturm–Liouville equations with continuous coefficients, and later extended in \cite{sppscampos} to equations with locally integrable coefficients. The SPPS method constructs solutions to Eq.~\eqref{eq:impedanceintro} by expanding them into Taylor series with respect to the spectral parameter. This expansion yields a sequence of Taylor coefficients $\{\varphi_k\}_{k=0}^{\infty}$ which are functions of the variable $x$. In the SPPS literature, these coefficients are referred to as formal powers \cite{camposlbases}, and are closely related to the concept of an $L$-basis for a differential operator, as introduced in \cite{fage}. The formal powers associated with a regular differential equation generalize several key properties of standard monomials, including completeness in certain functional spaces. For instance, in \cite{kravchenkocompleteness}, it was shown that the formal powers corresponding to a Schr\"odinger equation with a continuous potential form a complete system in $L^2(a,b)$ and $C[a,b]$. Furthermore, in \cite{tremblay}, various properties of these formal powers—such as binomial-type expansions and generalized trigonometric identities—are investigated. The completeness property of formal powers also facilitates the construction of complete systems of solutions for partial differential equations that can be reduced, via separation of variables, to the Schr\"odinger equation \cite{camposlbases, josafath, minerunge}. One of the key properties of formal powers is that their completeness ensures the existence of a continuous transmutation operator, that is, a Volterra integral operator that transforms the solutions of the equation $v''+\lambda v=0$ into the solutions of the Schr\"odinger equation \cite{marchenko,povzner}. Moreover, in \cite{camposlbases} it was shown that the formal powers are the image of the standard monomials under the transmutation operator. This property is known as the {\it transmutation property of the formal powers}, and was extended for Schr\"odinger equations with integrable potentials in \cite{camposstandard}, where it was shown that certain transmutation operators are completely determined by the corresponding associated formal powers. The significance of this transmutation property lies in that it allows us to construct analytical approximations, both for the transmutation operator itself and for the solutions of the equation to be studied, without requiring an explicit expression for the full operator \cite{NSBF1, kravchenkotorba1}. Transmutation operators have proven to be powerful tools in the practical solution of direct and inverse spectral problems,  particularly in the context of the Schr\"odinger equation, as developed over the past decade (see, e.g., \cite{kravchenkolibro, NSBF1, spps, kravchenkotorbainverse}). For the impedance-form equation, the existence of certain integral representations for its solutions was established in \cite{carroll}, while the fundamental properties of a transmutation operator in the case where $a\in C^1[0,\ell]$, was investigated in \cite{mineimpedance1}. It is worth mentioning that, even though for an impedance $a \in W^{1,2}$, the impedance equation is unitarily equivalent to the Schrödinger equation via the Liouville transformation, and one can thus apply the transmutation theory developed for the latter, if the impedance does not possess a second weak derivative in $ L^p$, the associated Schrödinger equation involves a distributional potential. There are some works concerning the existence of transmutation operators for certain classes of distributional potentials \cite{minedelta, shkalikov}, as well as integral representations for solutions of Eq. \eqref{eq:impedanceintro} satisfying specific initial conditions \cite{albeverio}. However, it is of particular interest to develop a transmutation theory directly for the Sturm-Liouville equation in impedance form, as this allows for explicit analytical representations and provides tools for the efficient solution of both direct and inverse spectral problems.
	
	This work aims to extend the construction of the formal powers to the impedance equation \eqref{eq:impedanceintro} with a proper integrable impedance. The system is defined by
	\begin{equation*}
		\varphi_a^{(0)}\equiv 1, \;\; \varphi_a^{(1)}(x):=\int_{x_0}^x\frac{dt}{a^2(t)},\;\;  \varphi_a^{(k)}(x):=k(k-1)\int_{x_0}^{x}\frac{dt}{a^2(t)}\int_{x_0}^ta^2(s)\varphi_a^{(k-2)}(s)ds, \; k\geq 2,
	\end{equation*}
	where $x_0\in \overline{I}$. We show that the formal powers $\{\varphi_a^{(k)}\}_{k=0}^{\infty}$ satisfy recursive relations involving the differential operators $\mathbf{D}_{a^{(-1)^k}}:=(a^2)^{(-1)^k}\frac{d}{dx}$, analogous to the relations between the standard monomials and the classical derivative operator. Moreover, we establish conditions for the existence of a generalized Taylor formula for solutions of the equation $\mathbf{D}^{(m)}_au=g$, where $\mathbf{D}^{(m)}u:=\prod_{j=1}^{m}\mathbf{D}_{a^{(-1)^{(j-1)}}}$.  We prove the completeness of the formal powers in the $L^p$-space with weight function $a^2$, and under additional hypotheses (that we establish in Sec. \ref{Sec: Completeness}), the completeness in Sobolev spaces $W^{m,p}(I)$. For completeness in $L^p$, we first consider a Sturm-Liouville problem with regular boundary conditions, associated to Eq. \eqref{eq:impedanceintro}. Using the SPPS method, we demonstrate that the system of formal powers is complete in the eigenfunction space of the associated problem. Subsequently, we study the completeness of the eigenfunctions in $L^p$ spaces, obtaining the completeness of the formal powers by transitivity. For completeness in $W^{1,p}$, we analyze the conditions under which a function admits a generalized Taylor approximation in terms of formal powers.
	
	As an application, we show how the completeness of the formal powers enables the construction of a transmutation operator for Eq. \eqref{eq:impedanceintro} when the impedance function belongs to $W^{1,\infty}$, along with some analytical properties such as the existence of a continuous inverse. Similarly, we will analyze the relationship between the formal powers for Eq. \eqref{eq:impedanceintro} and those corresponding powers for the equation with impedance $a^{-1}$, which, following \cite{mineimpedance1}, we refer to as {\it the associated Darboux equation}. These connections allow us to derive relations between the transmutation operator of Eq. \eqref{eq:impedanceintro} and its associated Darboux equation. It is worth mentioning that the existence of relations between the transmutation operator and the associated Darboux operator plays a crucial role in constructing transmutation operators and complete systems of solutions for systems of partial equations, such as Vekua-type equations with separable coefficients \cite{camposlbases,kravchenkotorbahyperbolic,minevekua1}. Finally, we study the existence of a transmutation operator for the case where $a\in W^{1,2}(0,\ell)$, along with its analytical properties.
	
	The paper is structured as follows. Section 2 provides background on the main properties of the solutions and factorizations of the impedance operator. Section 3 develops the definition and key properties of the formal powers, generalized derivatives, and formal Taylor polynomials.  Section 4 is devoted to proving the completeness of the formal powers in the $L^p$ and Sobolev spaces. In Section 5, we discuss the main properties of a transmutation operator for the case when $a\in W^{1,\infty}(-\ell,\ell)$, including the existence of a continuous inverse operator, and the relations with the transmutation operator for the Darboux-transformed equation. Finally, Section 6 presents the construction of a transmutation operator for the case $a\in W^{1,2}(0,\ell)$.

	\section{Types of solutions, factorization, Darboux transformation, and formal powers}
	
	Through the text, we use the notation $\mathbb{N}_0:=\mathbb{N}\cup\{0\}$.  Given Banach spaces $X$ and $Y$, $\mathcal{B}(X,Y)$ denotes the space of bounded linear operators. When $X=Y$, we write $\mathcal{B}(X):=\mathcal{B}(X,X)$, and the group of invertible operators with bounded inverse is denoted by $\mathcal{G}(X)$.
	
	Let $I\subset \mathbb{R}$ be a bounded interval and let $a\in L^2(I)$ be a positive measurable function such that $\frac{1}{a}\in L^2(I)$. A function satisfying this condition will be called a {\bf proper impedance}.
	
	For $1\leq p<\infty$, we use the notation $L_a^p(I)$ for the space of integrable measurable functions with respect to the weight function $a^2$. Hence, the norm is given by
	\[
	\|u\|_{L_a^p(I)}:=\left(\int_{I}|u(x)|^pa^2(x)dx\right)^{\frac{1}{p}}.
	\]
	For $p=2$, the corresponding inner product is given by
	\[
	\langle u,v\rangle_{L_a^2(I)}:=\int_{I}u(x)\overline{v(x)}a^2(x)dx.
	\]
	Since $a^2(x)dx$ defines a finite measure on $I$, the embedding $L^p_a(I)\hookrightarrow L_a^q(I)$ is continuous, whenever $q\leq p$ \cite[Prop. 6. 12]{folland}. If, in addition,  $a,\frac{1}{a}\in L^{\infty}(I)$, then $L_a^p(I)=L^p(I)$ with equivalent norms. We use the standard notation $W^{1,p}(I)$ for the Sobolev space of functions in $L^p(I)$ whose first $k$ distributional derivatives lie in $L^p(I)$. The space of absolutely continuous functions on $\overline{I}$ is denoted by $AC(\overline{I})$.

	The {\it Sturm-Liouville equation in impedance form} is given by
	
	\begin{equation}\label{eq:impedanceeq}
		-\frac{1}{a^2(x)}\frac{d}{dx}\left(a^2(x)\frac{du}{dx}\right)=\lambda u,\quad x\in I, \lambda \in \mathbb{C}.
	\end{equation}
	The function $a$ is called the {\it impedance} of Eq. \eqref{eq:impedanceeq}, and $\lambda$ is the spectral parameter. In some contexts, it is common to work with the {\it conductivity function} associated to $a$, that is, $\kappa(x):=a^2(x)$. Under our assumptions, we have that $\kappa,\frac{1}{\kappa}\in L^1(I)$ and $L_a^p(I)$ is just the $L^p$ space over $I$ with the measure $k(x)dx$.
	
	In general, we consider equations of the form
	\begin{equation}\label{eq:impedancegeneral}
		-\frac{1}{a^2(x)}\frac{d}{dx}\left(a^2(x)\frac{du}{dx}\right)=\lambda u+g(x),\quad x\in I, \lambda \in \mathbb{C},
	\end{equation}
	with $g\in L_a^p(I)$.  A {\it weak} solution of Eq. \eqref{eq:impedancegeneral}, is a function $u\in W^{1,1}(I)$ such that $u'\in L_a^p(I)$ and satisfies the following condition:
	\begin{equation}\label{eq:weakformulationimpedance}
		\int_{I}a^2(x)u'(x)\phi'(x)dx= \int_{I}a^2(x)\left(\lambda u(x)+g(x)\right)\phi(x)dx\qquad \forall \phi\in C_0^{\infty}(I).
	\end{equation}
	
	\begin{proposition}\label{Prop:Regularityofsol}
		If $u\in W^{1,1}(I)$ is a weak solution of an equation of type \eqref{eq:impedancegeneral}, then $a^2u\in AC(\overline{I})$ with $\frac{(a^2u')'}{a^2}\in L_a^p(I)$ and $u$ satisfies Eq. \eqref{eq:impedancegeneral} a.e. in $I$.
		
		If $a\in W^{1,p}(I)$, then $u\in W^{2,p}(I)$. If $a\in C^1(\overline{I})$ and $g\in C(\overline{I})$, then $u\in C^2(\overline{I})$ and is a classical solution.
	\end{proposition}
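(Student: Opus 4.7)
The plan is to split the claim into a baseline a.e.\ regularity assertion and two independent upgrades under additional hypotheses on $a$ and $g$.

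For the first assertion, I would read the weak formulation \eqref{eq:weakformulationimpedance} as a distributional identity. Setting $v := a^2 u'$, the equation states that, in the sense of distributions on $I$, $v' = -a^2(\lambda u + g)$. Since $u \in W^{1,1}(I) \hookrightarrow C(\overline{I})$ is bounded on the compact $\overline{I}$, we have $a^2 u \in L^1(I)$; and because $a^2\,dx$ is a finite measure, the continuous embedding $L_a^p(I) \hookrightarrow L_a^1(I)$ ensures $a^2 g \in L^1(I)$. Hence $v'$ is represented by an $L^1$ function, which forces $v \in W^{1,1}(I) \hookrightarrow AC(\overline{I})$. The pointwise identity $(a^2 u')' = -a^2(\lambda u + g)$ then holds almost everywhere, and dividing by $a^2$ yields $(a^2 u')'/a^2 = -(\lambda u + g) \in L_a^p(I)$.

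For the second assertion, assume $a \in W^{1,p}(I)$. The one-dimensional Sobolev embedding places $a \in C(\overline{I})$, so that together with the positivity assumption and the compactness of $\overline{I}$, $a$ is bounded between two positive constants and $L_a^p(I) = L^p(I)$ with equivalent norms. From $u' = v / a^2$ with $v \in AC(\overline{I})$ and $a^2 \in W^{1,p}(I)$, standard product-rule differentiation gives
\[
u'' \;=\; \frac{v'}{a^2} \;-\; \frac{2 a'}{a}\, u',
\]
where $v'/a^2 = -(\lambda u + g) \in L^p(I)$, $a'/a \in L^p(I)$, and $u' = v/a^2 \in C(\overline{I}) \subset L^\infty(I)$. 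Both terms belong to $L^p(I)$, whence $u \in W^{2,p}(I)$.

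For the last assertion, under $a \in C^1(\overline{I})$ and $g \in C(\overline{I})$ the function $u$ is continuous, so $-a^2(\lambda u + g) \in C(\overline{I})$, which makes $v = a^2 u'$ of class $C^1(\overline{I})$; dividing by the non-vanishing $C^1$ function $a^2$ gives $u' \in C^1(\overline{I})$, hence $u \in C^2(\overline{I})$ and $u$ satisfies Eq.~\eqref{eq:impedancegeneral} in the classical sense. The only subtle step is the initial one: carefully converting the weak formulation into an $L^1$ distributional identity for $a^2 u'$ so that one may invoke the characterization of $AC$ functions as primitives of $L^1$ functions. Once this is in place, the remainder is routine bootstrapping using either bounds on $a, 1/a$ or classical calculus on $C^1$ functions.
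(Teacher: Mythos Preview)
Your proof is correct and follows essentially the same route as the paper's: interpret the weak formulation as the distributional identity $(a^2u')' = -a^2(\lambda u + g)$ to obtain $a^2u' \in AC(\overline{I})$, then bootstrap via the product/quotient rule under the extra hypotheses on $a$. If anything, you are more careful than the paper in two places: you correctly argue only that $a^2(\lambda u+g)\in L^1(I)$ (the paper writes $L^p(I)$, which is not justified for a general proper impedance), and your $C^2$ argument---first showing $v=a^2u'\in C^1$ and then dividing---is cleaner than the paper's somewhat elliptical formula for $u''$.
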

	\begin{proof}
		Suppose that $u\in W^{1,1}(I)$ is a weak solution of \eqref{eq:impedancegeneral}. Condition \eqref{eq:weakformulationimpedance} is equivalent to saying that $-a^2(\lambda u+g)$ is the distributional derivative of $a^2u'$. Hence $(a^2u')'=-a^2(\lambda u+g)\in L^p(I)$, which implies that $a^2u'\in AC(\overline{I})$ with $\frac{(a^2u')'}{a^2}=-(\lambda u+g)\in L_a^p(I)$. Therefore, $u$ satisfies \eqref{eq:impedancegeneral} a.e. in $I$ (see \cite[Th. 8.2]{brezis}).
		
		Now suppose that $a\in W^{1,p}(I)$. Hence $a\in AC(\overline{I})$ (see \cite[Th. 8.2]{brezis}) and $(\frac{1}{a})'=\frac{-a'}{a^2}\in L^p(I)$, so $\frac{1}{a}\in W^{1,p}(I)$ and we can assume that $\frac{1}{a}\in C(\overline{I})$. Consequently, $L^p(I)=L_a^p(I)$ with equivalent norms. Also note that $(a^2)'=2aa'\in L^p(I)$, so $a^2\in W^{1,p}(I)$. Hence $-a^2(\rho^2u+g)\in L^p(I)$ and $a^2u'\in W^{1,p}(I)$.  Thus, by \cite[Cor. 8.10]{brezis}, we have $u'=\frac{a^2u'}{a^2}\in W^{1,p}(I)$, so $u\in W^{2,p}(I)$.
		
		Finally, if $a',g\in C(\overline{I})$, then $u'=-\frac{2a'}{a}u'-a^2(\lambda u+g)\in C(\overline{I})$, and $u\in C^2(\overline{I})$ is a classical solution.
	\end{proof}

	Solutions $u\in W^{1,1}(I)$ such that $a^2u'\in AC(\overline{I})$, with $\frac{(a^2u')'}{a^2}\in L_a^p(I)$, and which satisfy the equation a.e. in $I$ are called {\it strong solutions}. The previous result then shows that any weak solution of Eq. \eqref{eq:impedancegeneral} is, in fact, a strong solution.
	
	Denote $D=\frac{d}{dx}$. Based on the previous results, we define the Sturm-Liouville operator $\mathbf{L}_a:\mathscr{D}_p(\mathbf{L}_a)\subset L_a^p(I) \rightarrow L_a^p(I)$ with domain consisting of functions $u\in W^{1,1}(I)$ and such that $a^2u'\in AC(\overline{I})$ with $\frac{(a^2u')'}{a^2}\in L_a^p(I)$, and with action given by
	\begin{equation}\label{eq:Impedancea}
		\mathbf{L}_{a}:=\frac{1}{a^2}Da^2D.
	\end{equation}
	Consider the differential operator $\mathbf{D}_a: \mathscr{D}_p(\mathbf{D}_a)\subset L_a^p(I)\rightarrow L_{\frac{1}{a}}^p(I)$ whose domain consists of functions $u\in W^{1,1}(I)$ with $a^2u'\in L_{\frac{1}{a}}^p(I)$ and with action given by
	\begin{equation}\label{eq:1storderdifferentialoperators}
		\mathbf{D}_a:= a^2D.
	\end{equation}
	Note that the operator $\mathbf{L}_a$ admits the factorization
	\begin{equation}
		\mathbf{L}_a=\mathbf{D}_{\frac{1}{a}}\mathbf{D}_a.
	\end{equation}
	A similar factorization holds for the differential operator $\mathbf{L}_{\frac{1}{a}}$. Actually, if $a\in W^{2,p}(I)$, we have the factorization
	\[
	\mathbf{L}_au=\mathbf{D}_{\frac{1}{a}}\mathbf{D}_a u= \frac{1}{a}\left(\frac{1}{a}Da\right)\left(aD\frac{1}{a}\right)(au)=\frac{1}{a}\left(D+\frac{a'}{a}\right)\left(D-\frac{a'}{a}\right)(au)=\frac{1}{a}\left(D^2-\frac{a''}{a}\right)(au).
	\]
	which involves the Schr\"odinger operator $\mathbf{S}_a:=D^2-\frac{a''}{a}$. In the case of $\mathbf{L}_{\frac{1}{a}}$, we obtain
	\[
	\mathbf{L}_{\frac{1}{a}}u=a\left(D^2-a\left(\frac{1}{a}\right)''\right)\left(\frac{u}{a}\right).
	\]
	The operator $\mathbf{S}_{\frac{1}{a}}=D^2-a\left(\frac{1}{a}\right)''$ is known as the {\it Darboux Sch\"odinger operator} associated with $\mathbf{S}_a$. For this reason, the operator $\mathbf{L}_{\frac{1}{a}}$ will be called the {\it Darboux impedance operator} associated with $\mathbf{L}_a$.
	
	\begin{remark}
		Let $u\in \mathscr{D}_p(\mathbf{L}_a)$ be a solution of Eq. \eqref{eq:impedanceeq}, and set $v=\mathbf{D}_au\in \mathscr{D}_p(\mathbf{D}_{\frac{1}{a}})$. Note that
		\[
		\mathbf{D}_{\frac{1}{a}}v=\mathbf{D}_{\frac{1}{a}}\mathbf{D}_au=\mathbf{L}_au=-\lambda u\in \mathscr{D}_p(\mathbf{D}_a).
		\]
		Consequently, $v$ belongs to the domain of the operator $\mathbf{L}_{\frac{1}{a}}$ and
		\[
		\mathbf{L}_{\frac{1}{a}}v=\mathbf{D}_a\mathbf{D}_{\frac{1}{a}}\mathbf{D}_au=-\lambda \mathbf{D}_au=-\lambda v.
		\]
		That is, $\mathbf{D}_a$ maps solutions of Eq. \eqref{eq:impedanceeq} to solutions of the Darboux equation  $\mathbf{L}_{\frac{1}{a}}v=-\lambda v$. For this reason, $\mathbf{D}_a$ is referred to as the {\bf Darboux transform} of Eq. \eqref{eq:impedanceeq}.
	\end{remark}
	
	Fix $x_0\in \overline{I}$ and set
	\[
	\mathbf{J}_av(x):= \int_{x_0}^x\frac{v(t)}{a^2(t)}ds,\quad v\in L_{\frac{1}{a}}^p(I),
	\]
	Since $\frac{dx}{a^2(x)}$ is a finite measure on $I$, the embedding $L_{\frac{1}{a}}^p(I)\hookrightarrow L_{\frac{1}{a}}^1(I)$ is continuous \cite[Prop. 6.13]{folland} and the integral is well defined with $\mathbf{J}_a\in \mathcal{B}(L_{\frac{1}{a}}^p(I), L_a^p(I))$. Furthermore, $\mathbf{J}_a: L_{\frac{1}{a}}^p(I)\rightarrow \mathscr{D}_p(\mathbf{D}_a)$ and is a right-inverse for the operator $\mathbf{D}_a$.

	For $u,v\in \mathscr{D}_p(\mathbf{L}_a)$, the $a$-Wronskian of $u,v$ is defined by
	\[
	W_a[u,v](x)=\begin{vmatrix}
		u(x) & v(x)\\
		a^2(x)u'(x) & a^2(x)v'(x)
	\end{vmatrix}=u(x)\mathbf{D}_av(x)-v(x)\mathbf{D}_au(x).
	\]
	A straightforward calculation shows that if $u$ and $v$ are solutions of \eqref{eq:impedanceeq}, then $W_a[u,v]$ is constant on $I$ and $u$ and $v$ are linearly independent iff their $a$-Wronskian is not zero. In particular, the function $\mathbf{J}_a[1]$ is a solution of Eq. \eqref{eq:impedanceeq} with $\lambda=0$, linearly independent with respect to the constant solution $1$, and is known as the {\it Abel solution}.

	Since for $u\in \mathscr{D}_p(\mathbf{L}_a)$, $\mathbf{D}_au\in C(\overline{I})$, it is natural and convenient to consider initial value problems in terms of $u$ and $\mathbf{D}_au$.
	\begin{theorem}
		Given $\lambda\in \mathbb{C}$, $g\in L_a^p(I)$, $\xi\in \overline{I}$, and $(u_0,u_1)\in \mathbb{C}^2$, the Cauchy problem for Eq. \eqref{eq:impedancegeneral} with  initial conditions
		\[
		u(\xi)=u_0,\quad \mathbf{D}_au(\xi)=u_1,
		\]
		has a unique solution $u\in \mathscr{D}(\mathbf{L}_a)$.
	\end{theorem}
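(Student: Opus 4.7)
The plan is to reduce the second-order Cauchy problem to an equivalent Volterra integral equation and then to solve it by a standard iteration argument, exploiting the factorization $\mathbf{L}_a=\mathbf{D}_{1/a}\mathbf{D}_a$ together with the right-inverse $\mathbf{J}_a$ of $\mathbf{D}_a$. Specifically, if $u$ is a strong solution of \eqref{eq:impedancegeneral} with the prescribed initial data, then integrating $(a^2u')'=-a^2(\lambda u+g)$ from $\xi$ to $x$ and using $\mathbf{D}_au(\xi)=u_1$ gives $\mathbf{D}_au(x)=u_1-\int_\xi^x a^2(t)(\lambda u(t)+g(t))\,dt$, and a second integration against $1/a^2$, together with $u(\xi)=u_0$, yields
\[
u(x)=u_0+u_1\int_\xi^x\frac{ds}{a^2(s)}-\int_\xi^x\frac{1}{a^2(s)}\int_\xi^s a^2(t)\bigl(\lambda u(t)+g(t)\bigr)\,dt\,ds.
\]
Conversely, any continuous solution of this integral equation automatically lies in $\mathscr{D}_p(\mathbf{L}_a)$, satisfies \eqref{eq:impedancegeneral} a.e., and meets the two initial conditions, so the two problems are equivalent.

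Next I would set up the Banach fixed point argument in $C(\overline{I})$. Denote by $T$ the operator sending $u$ to the right-hand side above. The proper-impedance assumption ($a,1/a\in L^2(I)$, hence $a^2$ and $1/a^2$ are in $L^1(I)$) ensures, via Fubini and the continuous embeddings $L_a^p\hookrightarrow L_a^1$ and $L_{1/a}^p\hookrightarrow L_{1/a}^1$, that $T$ maps $C(\overline{I})$ into itself continuously. For contractivity I would use the weighted norm
\[
\|u\|_M:=\sup_{x\in\overline{I}}|u(x)|\,\exp\!\Bigl(-M\,\Phi(x)\Bigr),\qquad \Phi(x):=\Bigl|\int_{\xi}^{x}\tfrac{ds}{a^2(s)}\int_{\xi}^{s}a^2(t)\,dt\Bigr|,
\]
which is equivalent to the uniform norm since $\Phi$ is bounded on $\overline{I}$; choosing $M>|\lambda|$ then makes $T$ a contraction on $(C(\overline{I}),\|\cdot\|_M)$. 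Alternatively, direct Picard iteration gives $|T^nu-T^nv|(x)\le(|\lambda|\Phi(x))^n/n!\,\|u-v\|_\infty$, again yielding a unique fixed point by summing the resulting geometric series.

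Finally, to close the argument I would verify the regularity: for the fixed point $u$, the identity $\mathbf{D}_au(x)=u_1-\int_\xi^x a^2(t)(\lambda u(t)+g(t))\,dt$ shows $\mathbf{D}_au\in AC(\overline{I})$ with $(\mathbf{D}_au)'/a^2=-\lambda u-g\in L_a^p(I)$, so $u\in\mathscr{D}_p(\mathbf{L}_a)$; evaluating at $\xi$ recovers the two initial conditions. Uniqueness is immediate because any solution is a fixed point of $T$, but it can also be obtained from the homogeneous problem by a Gronwall-type estimate applied to the integral equation. The main subtlety, and the only point where the proof differs from the classical smooth-coefficient case, is that $a^2$ and $1/a^2$ are merely $L^1$; handling this requires checking by Fubini that the double integrals in the kernel are absolutely convergent and produce continuous functions of $x$, so that the iteration genuinely takes place in $C(\overline{I})$ rather than in some larger space.
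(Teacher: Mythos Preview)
Your argument is correct and complete. The reduction to the Volterra integral equation is the right move, the weighted-norm contraction in $C(\overline{I})$ works exactly as you describe (the key computation $\int_\xi^x e^{M\Phi(s)}\Phi'(s)\,ds=M^{-1}(e^{M\Phi(x)}-1)$ uses only that $\Phi'(s)=a^{-2}(s)\int_\xi^s a^2$, which holds under the mere $L^1$ hypotheses on $a^2$ and $a^{-2}$), and your regularity check that the fixed point lands in $\mathscr{D}_p(\mathbf{L}_a)$ is accurate.

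By contrast, the paper does not prove this theorem at all: it simply records that the statement is a special case of a general existence--uniqueness result for first-order systems with $L^1$ coefficients, citing \cite[Th.~4.1.1]{benewitz}. What you gain by writing the argument out is a self-contained treatment tailored to the impedance structure, making explicit why only $a^2,a^{-2}\in L^1(I)$ is needed and showing directly that the solution inherits the precise regularity $u\in\mathscr{D}_p(\mathbf{L}_a)$; what the citation buys is brevity and the observation that nothing here is specific to the impedance form---the result sits inside the standard Sturm--Liouville existence theory for integrable coefficients.
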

	\begin{proof}
		The result is a particular case of \cite[Th. 4.1.1]{benewitz}.
	\end{proof}
	
	\begin{remark}\label{Remark:nonhomogenouseq}
		Let  $\mathbf{R}_a: L_a^p(I)\rightarrow L_a^p(I)$ be the operator defined by
		\[
		\mathbf{R}_av(x):=\mathbf{J}_a\mathbf{J}_{\frac{1}{a}}v(x)= \int_{x_0}^x\frac{1}{a^2(t)}\left[\int_{x_0}^ta^2(s)v(s)ds\right]dt,\quad v\in L_a^p(I).
		\]
		Then $\mathcal{B}(L^p_a(I))$ and $\mathbf{R}_av\in \mathscr{D}_p(\mathbf{L}_a)$ for all $v\in L_a^p(I)$, with $\mathbf{L}_a\mathbf{R}_av=v$. Furthermore, for $v\in L_a^p(I)$ and $(u_0,u_1)\in \mathbb{C}^2$, the unique solution of the Cauchy problem $\mathbf{L}_au=v$, $u(x_0)=u_0, \mathbf{D}_au(x_0)=u_1$, is given by
		\begin{equation}\label{eq:Solnonhomogenouscauchy}
			u=\mathbf{R}_av+u_1\mathbf{J}_a[1]+u_0.
		\end{equation}
	\end{remark}

	\section{Formal powers, generalized derivatives and Taylor polynomials}

	Let $\rho\in \mathbb{C}$ satisfying $\lambda=\rho^2$, and let $e_a(\rho,x)$ be the unique solution of Eq. \eqref{eq:impedanceeq} satisfying the initial conditions
	\begin{equation}\label{eq:initialconditionssole}
		e_a(\rho,x_0)=1,\quad \mathbf{D}_ae_a(\rho,x_0)=i\rho.
	\end{equation}
	Define the functions
	\begin{equation}\label{eq:definitionsolutioncosineandsine}
		C_a(\rho,x):=\frac{e_a(\rho,x)+e_a(-\rho,x)}{2},\quad S_a(\rho,x):= \frac{e_a(\rho,x)-e_a(-\rho,x)}{2i\rho},\quad x\in I, \; \rho \in \mathbb{C}.
	\end{equation}
	Hence $C_a(\rho,x)$ and $S_a(\rho,x)$ are the solutions of Eq. \eqref{eq:impedanceeq} satisfying the initial conditions
	\begin{align}
		C_a(\rho,x_0)=1,  & \;\; \mathbf{D}_aC_a (\rho,x_0)=0, \label{eq:conditionscosine}\\
		S_a(\rho,x_0)=0, & \;\;\mathbf{D}_aS_a(\rho,x_0)=1. \label{eq:conditionssine}
	\end{align}
	
	The following result is the well-known spectral parameter power series representation (SPPS) for the solution $e(\rho,x)$. The proof can be found in \cite[Th. 7]{sppscampos} and \cite[Th. 1]{spps}.
	
	\begin{theorem}\label{Th:SPPS}
		The solutions $e_a(\rho,x)$ admits the SPPS representation
		\begin{equation}\label{eq:SPPSseries}
			e_a(\rho,x)=\sum_{k=0}^{\infty}\frac{(i\rho)^k\varphi_a^{(k)}(x)}{k!},\qquad x\in I, \; \rho \in \mathbb{C},
		\end{equation}
		where
		\begin{align}
			\varphi_{a}^{(0)}(x)& :=1, \label{eq:1stformalpower}\\
			\varphi_{a}^{(1)}(x) &:= \mathbf{J}_a[1](x),\label{eq:2ndformalpower}\\
			\varphi_{a}^{(k)}(x) & :=k(k-1)\mathbf{R}_a\varphi_a^{(k-1)}(x),\quad k\geq 2. \label{eq:kformalpower}
		\end{align}
		Consequently,
		\begin{equation}
			C_a(\rho,x)=\sum_{k=0}^{\infty}\frac{(-1)^k\rho^{2k}\varphi_a^{(2k)}}{(2k)!},\quad S_a(\rho,x)=\sum_{k=0}^{\infty}\frac{(-1)^k\rho^{2k}\varphi_a^{(2k+1)}}{(2k+1)!},\;\; x\in I, \; \rho \in \mathbb{C}.
		\end{equation}
		The series converges uniformly on $\overline{I}$ with respect to the variable $x$, and the series of the first and second derivatives converges in the norms of $L_{\frac{1}{a}}^p(I)$ and $L_a^p(I)$, respectively. The series converges uniformly and absolutely in the spectral parameter $\rho$ on compact subsets of $\mathbb{C}$.
	\end{theorem}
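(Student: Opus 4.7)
The plan is to check that the SPPS series \eqref{eq:SPPSseries} defines a strong solution of Eq. \eqref{eq:impedanceeq} satisfying the initial conditions \eqref{eq:initialconditionssole}, and then invoke uniqueness of the Cauchy problem. The key algebraic ingredient is the identity $\mathbf{L}_a\varphi_a^{(k)}=k(k-1)\varphi_a^{(k-2)}$ for $k\geq 2$, which follows directly from the recursive definition (read with $\varphi_a^{(k-2)}$ on the right, as stated in the introduction) together with $\mathbf{L}_a\mathbf{R}_a=\text{id}$ from Remark \ref{Remark:nonhomogenouseq}. Granted termwise differentiation, this identity yields
\[
\mathbf{L}_a\left(\sum_{k=0}^{\infty}\frac{(i\rho)^k\varphi_a^{(k)}}{k!}\right)=\sum_{k=2}^{\infty}\frac{(i\rho)^k}{(k-2)!}\varphi_a^{(k-2)}=-\rho^2\sum_{m=0}^{\infty}\frac{(i\rho)^m\varphi_a^{(m)}}{m!},
\]
so the series formally solves \eqref{eq:impedanceeq}. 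The required initial conditions are immediate: by induction on the recursion \eqref{eq:1stformalpower}--\eqref{eq:kformalpower} one has $\varphi_a^{(k)}(x_0)=0$ for $k\geq 1$ and $\mathbf{D}_a\varphi_a^{(k)}(x_0)=0$ for $k\neq 1$, while $\mathbf{D}_a\varphi_a^{(1)}(x_0)=\mathbf{D}_a\mathbf{J}_a[1](x_0)=1$.

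The analytic content lies in justifying termwise application of $\mathbf{D}_a$ and $\mathbf{L}_a$, which amounts to controlling three series simultaneously: the series itself uniformly on $\overline{I}$, the first-order series $\mathbf{D}_ae_a$ in $L_{1/a}^p(I)$, and the second-order series $\mathbf{L}_ae_a$ in $L_a^p(I)$. I would obtain all three from pointwise bounds on the formal powers by induction. Setting $A(x):=\left|\int_{x_0}^xa^2(t)\,dt\right|$ and $B(x):=\left|\int_{x_0}^x\frac{dt}{a^2(t)}\right|$, which are bounded and continuous on $\overline{I}$ since $a,1/a\in L^1(I)$, the target estimates are of the form
\[
|\varphi_a^{(2k)}(x)|\leq \frac{(2k)!}{(k!)^2}[A(x)B(x)]^k,\qquad |\varphi_a^{(2k+1)}(x)|\leq \frac{(2k+1)!}{k!(k+1)!}B(x)[A(x)B(x)]^k,
\]
so that $|\rho|^k|\varphi_a^{(k)}(x)|/k!$ is bounded by the $k$-th term of a series of entire order $1$ in $\rho$, with majorant uniform on compact sets in $\mathbb{C}$ and on $\overline{I}$. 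Combined with $\mathbf{D}_a\varphi_a^{(k)}=k(k-1)\mathbf{J}_{1/a}\varphi_a^{(k-2)}$ and the boundedness of $\mathbf{J}_{1/a}:L_a^p(I)\to L_{1/a}^p(I)$, and with $\mathbf{L}_a\varphi_a^{(k)}=k(k-1)\varphi_a^{(k-2)}$, the same majorant controls the derivative series in the prescribed weighted norms.

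Once the representation of $e_a(\rho,x)$ is established, the formulas for $C_a(\rho,x)$ and $S_a(\rho,x)$ drop out of the definitions \eqref{eq:definitionsolutioncosineandsine}: since $e_a(-\rho,x)=\sum_{k=0}^{\infty}(-i\rho)^k\varphi_a^{(k)}/k!$, averaging and differencing separates the even and odd powers of $\rho$, producing exactly the stated series. The step I expect to cost the most care is the clean derivation of the factorial bounds above, because neither $a$ nor $1/a$ admits a pointwise bound and the operators $\mathbf{J}_a$, $\mathbf{J}_{1/a}$ are not bounded on $L^\infty$. The estimate must therefore be pushed through using only the continuity of $A$ and $B$ on $\overline{I}$ together with the integration-by-parts identity $A(x)B(x)=\int_{x_0}^x(A'B+AB')\,dt$, yielding the correct binomial-type constants at each step of the induction; this is the technical heart of the SPPS argument in \cite[Th. 7]{sppscampos}, while everything else reduces to termwise manipulation and the Cauchy uniqueness already recorded.
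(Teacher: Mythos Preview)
Your proposal is correct and is precisely the standard SPPS argument to which the paper defers via the citations \cite[Th.~7]{sppscampos} and \cite[Th.~1]{spps}; the paper itself gives no self-contained proof here. Your parenthetical catching the index slip in \eqref{eq:kformalpower} (the recursion should read $\varphi_a^{(k-2)}$, as in the Introduction and consistent with \eqref{eq:Lbasis}) is apt; the only cosmetic imprecision is that boundedness of $A$ and $B$ on $\overline{I}$ uses $a^2,1/a^2\in L^1(I)$ (which holds because $a,1/a\in L^2(I)$), not merely $a,1/a\in L^1(I)$ as you wrote.
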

	\begin{definition}
		The functions $\{\varphi_{a}^{(k)}\}_{k=0}^{\infty}$ are called the {\bf formal powers associated with} $a$ and are said to be {\it centered at} the point $x_0$.
	\end{definition}
	
	\begin{remark}
		From equations \eqref{eq:1stformalpower}-\eqref{eq:kformalpower}, we obtain that
		\begin{equation}\label{eq:Lbasis}
			\mathbf{L}_a\varphi_{a}^{(0)}=\mathbf{L}_a\varphi_{a}^{(1)}=0,\quad \mathbf{L}_a\varphi_{a}^{(k)}=k(k-1)\varphi_{a}^{(k-2)},\;\; k\geq 2.
		\end{equation}
		Following \cite{camposstandard,fage}, we say that $\{\varphi_{a}^{(k)}\}_{k=0}^{\infty}$ is an $\mathbf{L}_a$-basis.
	\end{remark}
	
	The following proposition establishes a relation between formal powers $\{\varphi_a^{(k)}\}_{k=0}^{\infty}$ and $\{\varphi_{\frac{1}{a}}^{(k)}\}_{k=0}$.
	
	\begin{proposition}
		The following relations hold:
		\begin{align}
			\mathbf{D}_a\varphi_a^{(k)}& =k\varphi_{\frac{1}{a}}^{(k-1)} \qquad \forall k\in \mathbb{N},\label{eq:derivativeformalpower}\\
			\mathbf{J}_a\varphi_{\frac{1}{a}}^{(k-1)} & =\frac{\varphi_a^{(k)}}{k}\qquad \forall k\in \mathbb{N}.\label{eq:integralformalpower}
		\end{align}
	\end{proposition}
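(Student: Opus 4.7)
The plan is to prove both relations simultaneously by strong induction on $k$, exploiting the symmetry between $a$ and $1/a$ in the recursive definition of the formal powers. Before starting the induction, I would record two preliminary observations. First, from the defining formulas \eqref{eq:2ndformalpower}--\eqref{eq:kformalpower}, each $\varphi_a^{(k)}$ with $k\geq 1$ is an integral starting at $x_0$, hence $\varphi_a^{(k)}(x_0)=0$. Second, since $\mathbf{D}_a\mathbf{J}_a=I$ on $L^p_{1/a}(I)$, for any $k\geq 1$ we have $\mathbf{J}_a\mathbf{D}_a\varphi_a^{(k)}(x)=\varphi_a^{(k)}(x)-\varphi_a^{(k)}(x_0)=\varphi_a^{(k)}(x)$. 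As a consequence, \eqref{eq:integralformalpower} follows from \eqref{eq:derivativeformalpower} at the same index by a single application of $\mathbf{J}_a$.

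For the base case $k=1$, the identity $\mathbf{D}_a\varphi_a^{(1)}=\mathbf{D}_a\mathbf{J}_a[1]=1=\varphi_{1/a}^{(0)}$ is immediate from the definitions, and \eqref{eq:integralformalpower} at $k=1$ is likewise $\mathbf{J}_a\varphi_{1/a}^{(0)}=\mathbf{J}_a[1]=\varphi_a^{(1)}$. For the inductive step at $k\geq 2$, I would apply $\mathbf{D}_a$ to the recursion $\varphi_a^{(k)}=k(k-1)\mathbf{J}_a\mathbf{J}_{1/a}\varphi_a^{(k-2)}$ to obtain
\[
\mathbf{D}_a\varphi_a^{(k)}=k(k-1)\,\mathbf{J}_{1/a}\varphi_a^{(k-2)}.
\]
The key move is then to invoke \eqref{eq:integralformalpower} with the roles of $a$ and $1/a$ interchanged at index $k-1$ (which is available by the induction hypothesis, since the definitions and the previous paragraphs are valid for any proper impedance, and $1/a$ is proper whenever $a$ is); this reads $\mathbf{J}_{1/a}\varphi_{a}^{(k-2)}=\varphi_{1/a}^{(k-1)}/(k-1)$. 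Substituting yields $\mathbf{D}_a\varphi_a^{(k)}=k\varphi_{1/a}^{(k-1)}$, which is \eqref{eq:derivativeformalpower} at level $k$; then \eqref{eq:integralformalpower} at level $k$ follows by the preliminary observation.

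The main conceptual obstacle, if there is one, is just recognizing that the two identities must be coupled through the $a\leftrightarrow 1/a$ duality: neither can be propagated in isolation, because the recursion for $\varphi_a^{(k)}$ mixes $\mathbf{J}_a$ and $\mathbf{J}_{1/a}$, so one needs the swapped version of \eqref{eq:integralformalpower} at level $k-1$ to close the step for \eqref{eq:derivativeformalpower} at level $k$. Once that symmetry is used, the computation is a direct manipulation of the right-inverse property $\mathbf{D}_a\mathbf{J}_a=I$ and the vanishing of $\varphi_a^{(k)}$ at $x_0$.
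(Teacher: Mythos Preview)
Your proof is correct and follows essentially the same strategy as the paper: strong induction on $k$, the same preliminary observation that \eqref{eq:derivativeformalpower} and \eqref{eq:integralformalpower} are equivalent via $\mathbf{J}_a\mathbf{D}_a\varphi_a^{(k)}=\varphi_a^{(k)}$, the same base case, and the same opening computation $\mathbf{D}_a\varphi_a^{(k)}=k(k-1)\mathbf{J}_{1/a}\varphi_a^{(k-2)}$ in the step. The only organizational difference is that you close the step by invoking the swapped integral identity $\mathbf{J}_{1/a}\varphi_a^{(k-2)}=\varphi_{1/a}^{(k-1)}/(k-1)$ as part of a simultaneous induction over $a$ and $1/a$, whereas the paper keeps the induction for the fixed impedance $a$ only: it substitutes $\varphi_a^{(k-1)}=(k-1)\mathbf{J}_a\varphi_{1/a}^{(k-2)}$ and then recognizes $k(k-1)\mathbf{J}_{1/a}\mathbf{J}_a\varphi_{1/a}^{(k-2)}=\varphi_{1/a}^{(k)}$ directly from the defining recursion for the $1/a$-powers---so no swapped hypothesis is needed. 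Both routes are valid and amount to the same computation viewed from slightly different angles; one minor wording slip is that the identity $\mathbf{J}_a\mathbf{D}_a u=u-u(x_0)$ you use comes from the fundamental theorem of calculus, not from $\mathbf{D}_a\mathbf{J}_a=I$ (which is the other direction), but your conclusion from it is correct.
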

	\begin{proof}
		First, observe that since $\varphi_a^{(k)}(0)=0$ for all $k\in \mathbb{N}$, we have $\mathbf{J}_a\mathbf{D}_a\varphi_{a}^{(k)}=\varphi_{a}^{(k)}$, so equations \eqref{eq:derivativeformalpower} and \eqref{eq:integralformalpower} are equivalent. We prove \eqref{eq:derivativeformalpower} by strong induction on $k$. For $k=1$ we have
		\[
		\mathbf{D}_a\varphi_a^{(1)}=\mathbf{D}_a\mathbf{J}_a[1]=1=\varphi_{\frac{1}{a}}^{(0)}.
		\]
		Now, assume that  \eqref{eq:derivativeformalpower} is valid for $1\leq j\leq k$, $k>1$. We show that it also holds for \eqref{eq:derivativeformalpower} for $k+1$:
		\begin{align*}
			\mathbf{D}_a\varphi_a^{(k+1)}&= (k+1)k\mathbf{D}_a\mathbf{J}_a\mathbf{J}_{\frac{1}{a}}\varphi_a^{(k-1)} = (k+1)k\mathbf{J}_{\frac{1}{a}}\varphi_a^{(k-1)}.
		\end{align*}
		By the induction hypothesis, $\mathbf{D}_a\varphi_a^{(k-1)}=(k-1)\varphi_{\frac{1}{a}}^{(k-2)}$, and since $k-1>0$, we have the relation $\varphi_a^{(k-1)}=(k-1)\mathbf{J}_a\varphi_{\frac{1}{a}}^{(k-2)}$.
		Thus,
		\begin{align*}
			\mathbf{D}_a\varphi_a^{(k+1)}  &= (k+1)(k)(k-1)\mathbf{J}_{\frac{1}{a}}\mathbf{J}_a\varphi_{\frac{1}{a}}^{(k-2)}= (k+1)\varphi_{\frac{1}{a}}^{(k)}.
		\end{align*}
		This concludes the induction and proves \eqref{eq:derivativeformalpower} for all $k\in \mathbb{N}$, as desired.
	\end{proof}
	
	\begin{remark}
		Using the SPPS series of $e_a(\rho,x)$, we have
		\[
		\mathbf{D}_ae_a(\rho,x)= \sum_{k=0}^{\infty}\frac{(i\rho)^k}{k!}\mathbf{D}_a\varphi_a^{(k)}(x)=\sum_{k=1}^{\infty}\frac{(i\rho)^k}{(k-1)!}\varphi_{\frac{1}{a}}^{(k-1)}(x)=i\rho e_{\frac{1}{a}}(\rho,x).
		\]
		Consequently,
		\begin{align*}
			\mathbf{D}_aC_a(\rho,x) & = \frac{i\rho e_{\frac{1}{a}}(\rho,x)-i\rho e_{\frac{1}{a}}(-\rho,x)}{2}=-\rho^2 S_{\frac{1}{a}}(\rho,x),\\
			\mathbf{D}_aS_a(\rho,x) & = \frac{i\rho e_{\frac{1}{a}}(\rho,x)+i\rho e_{\frac{1}{a}}(-\rho,x)}{2i\rho}=C_{\frac{1}{a}}(\rho,x).
		\end{align*}
	\end{remark}
	\begin{theorem}\label{Prop:convergenceformalpowers}
		Suppose that $\{a_n\}_{n=0}^{\infty}$ is a sequence of proper impedance functions such that
		\begin{equation}\label{eq:conditionconvergenceona}
			a_n^{2j}\rightarrow a^{2j} \; \; \mbox{in } L^1(I) \; \text{ as } n\rightarrow \infty\text{ for } j=-1,1.
		\end{equation}
		Then, for each  $k\in \mathbb{N}$, we have $\varphi_{a_n}^{(k)}\rightarrow \varphi_a^{(k)}$ and $\varphi_{\frac{1}{a_n}}^{(k)}\rightarrow \varphi_{\frac{1}{a}}^{(k)}$ as $n\rightarrow \infty$ uniformly on $\overline{I}$.
		In particular, the conclusion is valid when $a_n\rightarrow a$ uniformly on $\overline{I}$.
	\end{theorem}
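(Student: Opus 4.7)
The plan is to proceed by induction on $k$, handling the two sequences $\{\varphi_{a_n}^{(k)}\}_n$ and $\{\varphi_{1/a_n}^{(k)}\}_n$ simultaneously. I would use the one-step identity $\varphi_a^{(k)}=k\mathbf{J}_a\varphi_{1/a}^{(k-1)}$ from \eqref{eq:integralformalpower} (and its ``reciprocal'' obtained by interchanging $a$ and $1/a$) in place of the two-step recursion \eqref{eq:kformalpower}, so that each step of the induction involves only a single integration, against the weight $1/a^2$ or $a^2$. The hypotheses on $a_n^2$ and $1/a_n^2$ in \eqref{eq:conditionconvergenceona} are symmetric, which is exactly what is needed to run the joint induction.

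The base case $k=0$ is immediate since $\varphi_{a_n}^{(0)}\equiv \varphi_a^{(0)}\equiv 1$. For the inductive step, assuming the uniform convergence at stage $k-1$ for both families, I would split
\begin{align*}
\varphi_{a_n}^{(k)}(x)-\varphi_a^{(k)}(x) &= k\int_{x_0}^x\frac{\varphi_{1/a_n}^{(k-1)}(t)-\varphi_{1/a}^{(k-1)}(t)}{a_n^2(t)}\,dt\\
&\qquad + k\int_{x_0}^x \varphi_{1/a}^{(k-1)}(t)\!\left(\frac{1}{a_n^2(t)}-\frac{1}{a^2(t)}\right)dt,
\end{align*}
and bound the absolute value uniformly in $x\in\overline{I}$ by
\[
k\bigl\|\varphi_{1/a_n}^{(k-1)}-\varphi_{1/a}^{(k-1)}\bigr\|_\infty \bigl\|1/a_n^2\bigr\|_{L^1(I)} + k\bigl\|\varphi_{1/a}^{(k-1)}\bigr\|_\infty \bigl\|1/a_n^2-1/a^2\bigr\|_{L^1(I)}.
\]
The first term tends to zero by the inductive hypothesis combined with the fact that any $L^1$-convergent sequence is norm-bounded, so $\sup_n\|1/a_n^2\|_{L^1(I)}<\infty$; the second tends to zero directly by \eqref{eq:conditionconvergenceona}. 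The finiteness of $\|\varphi_{1/a}^{(k-1)}\|_\infty$ is clear from its definition as an iterated integral on the bounded interval $\overline{I}$. Swapping the roles of $a$ and $1/a$ in the same argument yields $\varphi_{1/a_n}^{(k)}\to \varphi_{1/a}^{(k)}$ uniformly, which closes the induction.

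For the ``in particular'' statement, uniform convergence $a_n\to a$ on $\overline{I}$ furnishes a uniform upper bound on the $a_n$, so $a_n^2\to a^2$ uniformly on $\overline{I}$ and hence in $L^1(I)$; under the natural supplementary assumption that $a$ is bounded below (which is implicit when one speaks of uniform convergence of $1/a_n$ to $1/a$), the same reasoning yields $\|1/a_n^2-1/a^2\|_{L^1(I)}\to 0$, so the hypotheses of the theorem apply.

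The main obstacle is really just bookkeeping: once the parallel induction is organized through the one-step identity \eqref{eq:integralformalpower}, each estimate reduces to the standard ``triangle + Hölder'' decomposition $\|f_n g_n - fg\|_{L^1}\le \|f_n-f\|_{L^1}\|g_n\|_\infty + \|f\|_\infty\|g_n-g\|_{L^1}$, so no subtler tools (dominated convergence, Vitali, compactness) are required. The only point worth highlighting is that attempting to iterate the two-step recurrence \eqref{eq:kformalpower} directly forces one to deal with a double integral at once and to track boundedness of both families at the same time; choosing \eqref{eq:integralformalpower} as the inductive vehicle avoids this complication.
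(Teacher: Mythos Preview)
Your proof is correct and follows essentially the same route as the paper: induction on $k$, the triangle-inequality splitting of $\varphi_{a_n}^{(k)}-\varphi_a^{(k)}$ into an ``integrand'' term and a ``weight'' term, and the observation that $L^1$-convergent sequences are norm-bounded. The only organizational difference is that the paper inducts via the two-step recursion \eqref{eq:kformalpower} (so it bounds $\mathbf{J}_{1/a_n}\varphi_{a_n}^{(k-2)}-\mathbf{J}_{1/a}\varphi_a^{(k-2)}$ first and then repeats the estimate for the outer $\mathbf{J}_{a_n}$), whereas you run a joint induction through the one-step identity \eqref{eq:integralformalpower}; your packaging is marginally tidier but the substance is the same.
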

	\begin{proof}
		Indeed, the result is trivially true for $k=0$ and
		\begin{align*}
			\|\varphi_{a_n}^{(1)}-\varphi_a^{(1)}\|_{L^{\infty}(I)}\leq \int_I\left|\frac{1}{a^2_n(t)}-\frac{1}{a^2(t)}\right| dt\rightarrow 0.
		\end{align*}
		Suppose that $\varphi_{a_n}^{(j)}\rightarrow \varphi_a^{(j)}$ uniformly for $1\leq j< k$. Hence
		\begin{align*}
			\|\mathbf{J}_{\frac{1}{a_n}}\varphi_{a_n}^{(k-2)}-\mathbf{J}_{\frac{1}{a}}\varphi_a^{(k-2)}\|_{L^{\infty}(I)} \leq & \|\mathbf{J}_{\frac{1}{a_n}}\varphi_{a_n}^{(k-2)}-\mathbf{J}_{\frac{1}{a_n}}\varphi_a^{(k-2)}\|_{L^{\infty}(I)}+\|\mathbf{J}_{\frac{1}{a_n}}\varphi_a^{(k-2)}-\mathbf{J}_{\frac{1}{a}}\varphi_a^{(k-2)}\|_{L^{\infty}(I)}\\
			\leq & \|a_n^2\|_{L^1(I)}\|\varphi_{a_n}^{(k-2)}-\varphi_a^{(k-2)}\|_{L^{\infty}(I)}\\
			& \; +\|a_n^2-a^2\|_{L^1(I)}\|\varphi_a^{(k-2)}\|_{L^{\infty}(I)}.
		\end{align*}
		By \eqref{eq:conditionconvergenceona}, the sequence $\{\|a_n^2\|_{L^1(I)}\}$ is bounded, and the induction hypothesis along with condition \eqref{eq:conditionconvergenceona} implies $\|\mathbf{J}_{\frac{1}{a_n}}\varphi_{a_n}^{(k-2)}-\mathbf{J}_{\frac{1}{a}}\varphi_a^{(k-2)}\|_{L^{\infty}(I)}\rightarrow 0$. Hence $\mathbf{J}_{\frac{1}{a_n}}\varphi_{a_n}^{(k-2)}$ converges uniformly to $\mathbf{J}_{\frac{1}{a}}\varphi_a^{(k-2)}$. Applying the same procedure, we obtain that
		\[
		\varphi_{a_n}^{(k)}=k(k-1)\mathbf{J}_{a_n}\mathbf{J}_{\frac{1}{a_n}}\varphi_{a_n}^{(k-2)} \rightarrow k(k-1)\mathbf{J}_a\mathbf{J}_{\frac{1}{a}}\varphi_a^{(k-2)}=\varphi_a^{(k)}\text{   uniformly on } \overline{I}.
		\]
	\end{proof}

	We denote by $\Pi(\overline{I})$ the set of polynomial functions defined on $\overline{I}$. We extend this concept to the subspace generated by the formal powers.
	
	\begin{definition}
		A {\bf formal polynomial associated with} $a$, is a function of the form
		\[
		\widehat{p}=\sum_{k=0}^{N}\alpha_k\varphi_{a}^{(k)},\quad \alpha_0,\dots, \alpha_N\in \mathbb{C},
		\]
		or equivalently, an element of $\operatorname{Span}\{\varphi_{a}^{(k)}\}_{k=0}^{\infty}$. The set of formal polynomials associated with $a$ is denoted by $\Pi_a(\overline{I})$.
	\end{definition}
	\begin{remark}\label{Remark:operatorIa}
		\begin{itemize}
			\item[(i)] $\mathbf{J}_a\left(\Pi_{\frac{1}{a}}(\overline{I})\right)\subset \Pi_a(\overline{I})$. Indeed, let $\widehat{p}\in \Pi_{\frac{1}{a}}(\overline{I})$. Hence, there exist $\alpha_1,\dots, \alpha_N\in \mathbb{C}$ such that $\widehat{p}=\sum_{k=0}^{N}\alpha_k\varphi_{\frac{1}{a}}^{(k)}$. By equation \eqref{eq:integralformalpower} we have
			\[
			\mathbf{J}_a\widehat{p}=\sum_{k=0}^{N}\alpha_k\mathbf{J}_a\varphi_{\frac{1}{a}}^{(k)}=\sum_{k=0}^N\frac{\alpha_k}{k+1}\varphi_{a}^{(k+1)}\in \Pi_{a}(\overline{I}).
			\]
			\item[(ii)]  The equality cannot hold because  $\mathbf{J}_ay(x_0)=0$ for all $y\in L^p(I)$, and then there is no function such that $\mathbf{J}_ay=1\in \Pi_a(\overline{I})$.
			\item[(ii)] Since $\mathbf{R}_a=\mathbf{J}_a\mathbf{J}_{\frac{1}{a}}$, by the previous point $\mathbf{R}_a\left(\Pi_a(\overline{I}))\right)\subset \Pi_a(\overline{I})$.
		\end{itemize}
	\end{remark}
	
	Abusing notation, we denote $\mathbf{D}_{a^{(-1)}}=\mathbf{D}_{\frac{1}{a}}$ and $\mathbf{J}_{a^{(-1)}}=\mathbf{J}_{\frac{1}{a}}$. Following \cite{KravTremblay1}, we introduce some generalized derivatives and integrals associated with $a$.
	
	\begin{definition}
		The generalized $a$-derivatives are defined recursively by
		\begin{equation}\label{eq:aderivative}
			\mathbf{D}_a^{(0)}:=\mathbf{I},\quad  \mathbf{D}_a^{(m)}=\mathbf{D}_{a^{(-1)^{m-1}}}\mathbf{D}_a^{(m-1)},\; n\in \mathbb{N}.
		\end{equation}
		The $a$-integrals are defined as
		\begin{equation}\label{eq:aintegral}
			\mathbf{J}_a^{(0)}:=\mathbf{I} ,\quad \mathbf{J}_a^{(m)}:= \mathbf{J}_a^{(m-1)}\mathbf{J}_{a^{(-1)^{m}}}, \; m\in \mathbb{N}.
		\end{equation}
		Here, $\mathbf{I}$ denotes the identity operator.
	\end{definition}
	
	\begin{proposition}
		The following statements hold:
		\begin{itemize}
			\item[(i)] \begin{equation}\label{eq:generalizedderivativeformalpowers}
				\mathbf{D}_a^{(m)}\varphi_a^{(n)}=\begin{cases}
					n(n-1)\dots (n-m+1)\varphi_{a^{(-1)^{m}}}^{(n-m)}, & \text{ if }\; n\geq m,\\
					0, & \text{ otherwise}.
				\end{cases}
			\end{equation}
			\item[(ii)] \begin{equation}
				\mathbf{J}_a^{(m)}[1]=\frac{1}{m!}\varphi_a^{(m)}.
			\end{equation}
			\item[(iii)] If $\widehat{p}\in \Pi_a(\overline{I})$, then
			\begin{equation}\label{eq:ataylorpol}
				\widehat{p}=\sum_{k=0}^m\frac{\mathbf{D}_a^{(m)}\widehat{p}(x_0)}{m!}\varphi_a^{(m)}.
			\end{equation}
		\end{itemize}
	\end{proposition}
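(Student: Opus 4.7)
The plan is to dispatch the three parts sequentially, with (iii) falling out of (i) by a standard evaluate-at-$x_0$ argument.

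For (i), I would use strong induction on $m$ with $n$ arbitrary. The base case $m=1$ is precisely formula \eqref{eq:derivativeformalpower}, which written with the exponent convention reads $\mathbf{D}_a\varphi_a^{(n)}=n\,\varphi_{a^{(-1)^1}}^{(n-1)}$ for $n\geq 1$ and $0$ otherwise. For the inductive step, I use the recursive definition $\mathbf{D}_a^{(m)}=\mathbf{D}_{a^{(-1)^{m-1}}}\mathbf{D}_a^{(m-1)}$. By hypothesis, $\mathbf{D}_a^{(m-1)}\varphi_a^{(n)}=n(n-1)\cdots(n-m+2)\,\varphi_{a^{(-1)^{m-1}}}^{(n-m+1)}$, and the impedance of the resulting formal power is $a^{(-1)^{m-1}}$, so applying $\mathbf{D}_{a^{(-1)^{m-1}}}$ is again a base-case step now for the impedance $a^{(-1)^{m-1}}$. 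It contributes the factor $(n-m+1)$ and swaps the impedance to $a^{(-1)^m}$, giving the claimed formula. The degenerate case $n<m$ is handled by noting that once the index reaches $0$, one differentiates the constant $\varphi^{(0)}\equiv 1$ and obtains zero.

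For (ii), I would again induct on $m$. The base case $m=1$ reduces to $\mathbf{J}_a^{(1)}[1]=\mathbf{J}_a[1]=\varphi_a^{(1)}$ by \eqref{eq:2ndformalpower}. For the inductive step, I would use the definition of $\mathbf{J}_a^{(m)}$ together with the definitional identity $\varphi_a^{(m)}=m(m-1)\,\mathbf{J}_a\mathbf{J}_{a^{-1}}\varphi_a^{(m-2)}$ from \eqref{eq:kformalpower} (and its $a\leftrightarrow 1/a$ analogue) to match the nested integrals on both sides. Equivalently, one can compose \eqref{eq:integralformalpower} with itself: repeatedly rewriting $\mathbf{J}_{a^{(-1)^{j}}}\varphi_{a^{(-1)^{j-1}}}^{(k)}=\varphi_{a^{(-1)^{j}}}^{(k+1)}/(k+1)$ telescopes the alternating composition defining $\mathbf{J}_a^{(m)}$ applied to $1$ into $\varphi_a^{(m)}/m!$, after collecting the product of the prefactors $1\cdot\frac{1}{2}\cdots\frac{1}{m}$.

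For (iii), write $\widehat{p}=\sum_{k=0}^{N}\alpha_k\varphi_a^{(k)}$ and apply $\mathbf{D}_a^{(j)}$ termwise using part (i):
\[
\mathbf{D}_a^{(j)}\widehat{p}=\sum_{k=j}^{N}\alpha_k\,k(k-1)\cdots(k-j+1)\,\varphi_{a^{(-1)^{j}}}^{(k-j)}.
\]
By construction $\varphi_b^{(0)}\equiv 1$ for any impedance $b$, while $\varphi_b^{(k)}(x_0)=0$ for $k\geq 1$ (both $\mathbf{J}_a$ and $\mathbf{J}_{a^{-1}}$ vanish at $x_0$). Evaluating at $x_0$ therefore collapses the sum to its $k=j$ contribution, yielding $\mathbf{D}_a^{(j)}\widehat{p}(x_0)=j!\,\alpha_j$. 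Solving for $\alpha_j$ and substituting back into the original expansion produces the generalized Taylor formula, with the understanding that the summation index in \eqref{eq:ataylorpol} ranges over $k$ from $0$ up to the degree of $\widehat{p}$.

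The main obstacle is the bookkeeping in (ii): one must carefully track the alternation of the impedance between $a$ and $a^{-1}$ at each nested level so that the composition defining $\mathbf{J}_a^{(m)}$ matches the nested structure inherent in $\varphi_a^{(m)}$. Once the index conventions are aligned, (i) and (iii) reduce to transparent calculations.
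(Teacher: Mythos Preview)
Your proposal is correct and follows essentially the same route as the paper's own proof: the paper dispatches (i) and (ii) by ``a standard induction argument'' and derives (iii) as ``a direct consequence of statement (i) together with the fact that $\varphi_a^{(j)}(x_0)=0$ if $j>0$,'' which is exactly your plan carried out in more detail. Your telescoping of \eqref{eq:integralformalpower} for (ii) and your evaluate-at-$x_0$ collapse for (iii) are the natural executions of what the paper leaves implicit.
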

	\begin{proof}
		Statements (i) and (ii) follow by a standard induction argument. Formula \eqref{eq:ataylorpol} is a direct consequence of statement (i) together with the fact that $\varphi_a^{(j)}(x_0)=0$ if $j>0$.
	\end{proof}

	Using relations \eqref{eq:derivativeformalpower}, an induction argument establishes the following equalities.

	\begin{equation}\label{eq:aDerivativealternativeformula}
		\mathbf{D}_a^{(m)}=\prod_{j=1}^{m}\mathbf{D}_{a^{(-1)^{(j-1)}}} \quad \text{and}\quad \mathbf{J}_a^{(m)}=\prod_{j=0}^{m-1}\mathbf{J}_{a^{(-1)^{(m-j)}}}
	\end{equation}

	\begin{proposition}\label{Prop:taylor}
		For $m\in \mathbb{N}$, the general solution of equation $\mathbf{D}_a^{(m)}u=g$, with $g\in L_{a^{(-1)^{m}}}^p(I)$, is given by
		\begin{equation}\label{eq:generaltaylorformula1}
			u=\mathbf{J}_a^{(m)}g+\sum_{k=0}^{m-1}\frac{\mathbf{D}_a^{(k)}u(x_0)}{k!}\varphi_a^{(k)}.
		\end{equation}
		Moreover, if $a\in W^{m-1,p}(I)$, then every function $u\in W^{m,p}(I)$ admits the generalized Taylor approximation
		\begin{equation}\label{eq:generaltaylorformula2}
			u=\mathbf{J}_a^{(m)}[\mathbf{D}_a^{(m)}u]+\sum_{k=0}^{m-1}\frac{\mathbf{D}_a^{(k)}u(x_0)}{k!}\varphi_a^{(k)}.
		\end{equation}
		If $x_0$ is a Lebesgue point of $\mathbf{D}_a^{(m)}u$,  the remainder term satisfies $\mathbf{J}_a^{(m)}[\mathbf{D}_a^{(m)}u] =O((x-x_0)^m)$ as $x\rightarrow x_0$.
	\end{proposition}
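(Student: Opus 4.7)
My plan is to prove the three claims in sequence: the general solution formula first by induction on $m$, the Taylor approximation as a direct substitution, and the remainder estimate by splitting at the Lebesgue point.

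For \eqref{eq:generaltaylorformula1}, I would induct on $m$. The base case $m=1$ is the fundamental theorem of calculus applied to $\mathbf{D}_a u = a^2 u' = g$, giving $u = u(x_0) + \mathbf{J}_a g$. For the inductive step, use the factorization $\mathbf{D}_a^{(m)} = \mathbf{D}_{\frac{1}{a}}^{(m-1)} \mathbf{D}_a$, which is immediate from \eqref{eq:aderivative} together with the identity $(1/a)^{(-1)^{k}} = a^{(-1)^{k+1}}$. Setting $w := \mathbf{D}_a u$ gives $\mathbf{D}_{\frac{1}{a}}^{(m-1)} w = g$, so the inductive hypothesis (applied with impedance $1/a$) expresses $w$ as $\mathbf{J}_{\frac{1}{a}}^{(m-1)} g$ plus a Taylor-type sum built from the values $\mathbf{D}_{\frac{1}{a}}^{(k)} w(x_0) = \mathbf{D}_a^{(k+1)} u(x_0)$ and the powers $\varphi_{\frac{1}{a}}^{(k)}$. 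Then integrating via $u = u(x_0) + \mathbf{J}_a w$ and using $\mathbf{J}_a \varphi_{\frac{1}{a}}^{(k)} = \varphi_a^{(k+1)}/(k+1)$ from \eqref{eq:integralformalpower} together with $\mathbf{J}_a \mathbf{J}_{\frac{1}{a}}^{(m-1)} = \mathbf{J}_a^{(m)}$ collapses the expression into \eqref{eq:generaltaylorformula1} after reindexing.

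For \eqref{eq:generaltaylorformula2}, the remaining work is to check that $g := \mathbf{D}_a^{(m)} u$ lies in $L^p_{a^{(-1)^m}}(I)$ and that each $\mathbf{D}_a^{(k)} u$, $0\le k\le m-1$, has a continuous representative so that the initial values in \eqref{eq:generaltaylorformula1} are well defined. Under $a\in W^{m-1,p}(I)$ the one-dimensional Sobolev embedding gives $a, 1/a\in L^{\infty}(I)$ for $m\ge 2$, so the weighted and unweighted $L^p$ norms are equivalent. A routine induction on $k$ then shows that $\mathbf{D}_a^{(k)} u = (a^2)^{(-1)^{k-1}} (\mathbf{D}_a^{(k-1)} u)'$ lies in $W^{m-k,p}(I) \hookrightarrow C(\overline I)$ for $k\le m-1$ and in $L^p(I)$ for $k=m$. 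Substituting $g=\mathbf{D}_a^{(m)} u$ into \eqref{eq:generaltaylorformula1} then yields \eqref{eq:generaltaylorformula2}.

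For the remainder estimate, split at $x_0$:
\[
\mathbf{J}_a^{(m)}\bigl[\mathbf{D}_a^{(m)} u\bigr](x) = \frac{g(x_0)}{m!}\,\varphi_a^{(m)}(x) + \mathbf{J}_a^{(m)}[g - g(x_0)](x).
\]
Boundedness of $a^{\pm 1}$ together with the recursion \eqref{eq:kformalpower} yields $\varphi_a^{(m)}(x) = O((x-x_0)^m)$ by an elementary induction on $m$. For the second term, the pointwise bound $|\mathbf{J}_b \phi(x)| \le \|1/b^2\|_{L^{\infty}(I)}\int_{x_0}^x |\phi(t)|\,dt$ for $b\in\{a,1/a\}$ applied at each of the $m$ nested integrations reduces matters to an iterated estimate on the Lebesgue-point remainder $F(x) := \int_{x_0}^x |g(t)-g(x_0)|\,dt = o(|x-x_0|)$. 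A short induction using the uniform $\varepsilon$-$\delta$ characterization of $o(|x-x_0|)$ shows that each further integration raises the exponent by one, giving $o(|x-x_0|^m)$ after $m$ steps. Combining both pieces yields the stated $O(|x-x_0|^m)$ bound. The delicate point is keeping the $o$-decay sharp through iterated integration: one must argue with the uniform $\varepsilon$-$\delta$ bound near $x_0$ at every level rather than merely a pointwise statement, to avoid degrading to $O(|x-x_0|^{m-1})$ at some intermediate step.
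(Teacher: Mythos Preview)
Your proposal is correct. The arguments for \eqref{eq:generaltaylorformula1} and \eqref{eq:generaltaylorformula2} follow essentially the same induction-and-substitution strategy as the paper, though you peel the operator from the bottom via the factorization $\mathbf{D}_a^{(m)} = \mathbf{D}_{1/a}^{(m-1)}\mathbf{D}_a$ and apply the inductive hypothesis with impedance $1/a$, whereas the paper peels from the top: it first characterizes the kernel of $\mathbf{D}_a^{(m)}$ by noting that $\mathbf{D}_a^{(m+1)}v=0$ forces $\mathbf{D}_a^{(m)}v$ to be constant, subtracts the particular solution $\mathbf{J}_a^{(m)}[c]=\tfrac{c}{m!}\varphi_a^{(m)}$, and invokes the hypothesis at level $m$. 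Both inductions are equally natural.

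The substantive difference is in the remainder estimate. You split $g=g(x_0)+(g-g(x_0))$, bound $\varphi_a^{(m)}(x)=O((x-x_0)^m)$ via boundedness of $a^{\pm 1}$, and then propagate the Lebesgue-point $o(|x-x_0|)$ decay through the $m$ nested integrations by a uniform $\varepsilon$--$\delta$ argument. The paper instead applies L'Hospital's rule $m-1$ times---using the continuity of $a$ (available since $a\in W^{m-1,p}(I)$) to evaluate the weight factors at $x_0$ at each step---until only a single average $\tfrac{1}{x-x_0}\int_{x_0}^{x}(a^2)^{(-1)^{m-1}}\mathbf{D}_a^{(m)}u\,dt$ remains, and then invokes the Lebesgue differentiation theorem. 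The paper's route is shorter and in fact computes the limit $\lim_{x\to x_0}(x-x_0)^{-m}\,\mathbf{J}_a^{(m)}[\mathbf{D}_a^{(m)}u](x)$ explicitly rather than merely establishing its finiteness; your route is more elementary, avoids L'Hospital, and makes the sharper $o((x-x_0)^m)$ decay of the Lebesgue-point correction visible.
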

	
	\begin{proof}
		From equalities \eqref{eq:aDerivativealternativeformula}, it follows that for any $g\in L_{a^{(-1)^{m}}}^p(\overline{I})$, the function $\mathbf{J}_a^{(m)}g$ belongs to the domain of $\mathbf{D}_a^{(m)}$ and $\mathbf{D}_a^{(m)}\mathbf{J}_a^{(m)}g=g$. First, we prove by induction that the unique solution of the homogeneous equation $\mathbf{D}_a^{(m)}v=0$ has the form $v=\sum_{k=0}^{m-1}\frac{\mathbf{D}_a^{(k)}u(x_0)}{k!}\varphi_a^{(k)}$. For $m=1$, this is immediate. Assume the formula holds for $1\leq k\leq m$, with $m\geq 2$. Suppose that $\mathbf{D}_a^{(m+1)}v=0$. Hence $(a^2)^{(-1)^{m}}D\mathbf{D}_a^{(m)}v=0$, which implies that $\mathbf{D}_a^{(m)}v=c$, with $c\in \mathbb{C}$. A particular solution is given by $\mathbf{J}_a^{(m)}[c]=\frac{c}{m!}\varphi_a^{(m)}$. Then $w=v-\mathbf{J}_a^{(m)}[c]$ is a solution of $\mathbf{D}_a^{(m)}v=0$, and by the induction hypothesis, $w=\sum_{k=0}^{m-1}\frac{\mathbf{D}_a^{(k)}u(x_0)}{k!}\varphi_a^{(k)}$.  Since $\mathbf{D}^{(m)}\mathbf{J}_a^{(m)}[c](x_0)=c$, we conclude that $v=\sum_{k=0}^{m}\frac{\mathbf{D}_a^{(k)}u(x_0)}{k!}\varphi_a^{(k)}$.
		
		For the non-homogeneous equation, consider  $v=u-\mathbf{J}_a^{(m)}g$, which is a solution of $\mathbf{D}_a^{(m)}v=0$. Thus, $v=\sum_{k=0}^{m-1}\frac{\mathbf{D}_a^{(k)}u(x_0)}{k!}\varphi_a^{(k)}$. Since $\mathbf{D}_a^{(j)}v(x_0)=\mathbf{D}_a^{(j)}u(x_0)-\mathbf{J}_a^{(m-j)}g(x_0)=\mathbf{D}_a^{(j)}u(x_0)$ for $j<m$, we conclude \eqref{eq:generaltaylorformula1}, as desired.
		
		Finally, if $a\in W^{m-1,p}(I)$ and $u\in W^{m,p}(I)$, then $\mathbf{D}_a^{(m)}u\in L_{a^{(-1)^{m}}}^p(I)$ and formula \eqref{eq:generaltaylorformula2} follows directly from \eqref{eq:generaltaylorformula1}. When $x_0$ is a Lebesgue point of $\mathbf{D}_a^{(m)}u$,  the Lesbesgue differentiation theorem \cite[Ch. 3., Th. 3. 21]{folland} together with the L'Hospital rule yield
		\begin{align*}
			\lim_{x\rightarrow x_0}\frac{1}{(x-x_0)^m}\mathbf{J}_a^{(m)}\mathbf{D}_a^{(m)}u(x)&=\frac{1}{m!}\prod_{k=2}^{m-1}{a^2(x_0)}^{(-1)^{m-k}}\lim_{x\rightarrow 0}\frac{1}{x-x_0}\int_{x_0}^x(a^2(t))^{(-1)^{m-1}}\mathbf{D}_a^{(m)}u(t)dt\\
			&=\frac{1}{m!}\prod_{k=1}^{m-1}{a^2(x_0)}^{(-1)^{m-k}}\mathbf{D}_a^{(m)}u(x_0).
		\end{align*}
		
		Thus, $\mathbf{J}_a^{(m)}\mathbf{D}_a^{(m)}u(x)=O((x-x_0)^m)$ as $x\rightarrow x_0$.
	\end{proof}
	
	\begin{remark}\label{Remark:Jam}
		Let $m,k\in \mathbb{N}$. By \eqref{eq:generalizedderivativeformalpowers}, $\varphi_{a^{(-1)^m}}^{(k)}=\frac{1}{(m+k)\dots (k+1)}\mathbf{D}_a^{(m)}\varphi_a^{(m+k)}$. Proposition \ref{Prop:taylor} implies that
		\begin{equation*}
			\mathbf{J}_a^{(m)}\varphi_{a^{(-1)^m}}^{(k)}=\frac{1}{(m+k)\dots (k+1)}\mathbf{J}_a^{(m)}\mathbf{D}_a^{(m)}\varphi_a^{(m+k)}=\frac{\varphi_a^{(m+k)}}{(m+k)\dots (k+1)}+\widehat{p},
		\end{equation*}
		where $\widehat{p}\in \Pi_a(\overline{I})$ is of a degree less than $m+k$. Consequently, $\mathbf{J}_a^{(m)}\varphi_{a^{(-1)^m}}^{(k)}\in \Pi_a(\overline{I})$ for all $k\in \mathbb{N}$. $\therefore \mathbf{J}_a^{(m)}\left(\Pi_{a^{(-1)^m}}(\overline{I})\right)\subset \Pi_a(\overline{I})$.
	\end{remark}
	
	\section{Completeness of the formal powers}\label{Sec: Completeness}
	Recall that a subset $M$ of a Banach space $X$ is called {\it complete} in $X$ if  $\operatorname{Span}M$ is dense in $X$. In the case of a sequence of vectors $\{u_k\}_{k=0}^{\infty}$, if the set is complete we say that it forms a {\it complete system}. An element $u\in \operatorname{Span}\{u_k\}_{k=0}^{\infty}$  can be written as $u=\sum_{k=0}^{N}\alpha_ku_k$, with $\alpha_0,\dots,\alpha_N\in \mathbb{C}$. For example, by the Weierstrass approximation theorem and the density of $C(\overline{I})$ in $L^p(I)$, is known that $\Pi(\overline{I})$ is complete in $L^p(I)$, for $1\leq p<\infty$.

	To prove the completeness of the formal powers $\{\varphi_a^{(k)}\}_{k=0}^{\infty}$ in $L^p(I)$, $1\leq p<\infty$, we recall some results related to Sturm-Liouville problems associated to Eq. \eqref{eq:impedanceeq}.
	
	First, suppose that $I=(\ell_1,\ell_2)$ and consider the operator $\mathbf{L}_a^D:\mathscr{D}_p(\mathbf{L}_a^D)\subset L_a^p(I) \rightarrow L_a^p(I)$ given by the restriction of $\mathbf{L}_a$ to the subspace $\mathscr{D}_p(\mathbf{L}_a^D):=\mathscr{D}_p(\mathbf{L}_a)\cap W^{1,1}_0(I)$ (that is, the operator subject to Dirichlet-to-Dirichlet boundary conditions $u(\ell_1)=u(\ell_2)=0$).
	
	\begin{lemma}\label{lemma:invertibledirichletoperator}
		Let $1\leq p<\infty$. The operator $\mathbf{R}_a^D:L_a^p(I)\rightarrow L_a^p(I)$ given by
		\begin{equation}\label{eq:rightinversedirichlet}
			\mathbf{R}_a^Dg= \mathbf{R}_ag+\frac{\mathbf{R}_ag(\ell_1)-\mathbf{R}_ag(\ell_2)}{\|a\|_{L^2(I)}}\varphi_a^{(1)}+\frac{\mathbf{R}_ag(\ell_1)\varphi_a^{(1)}(\ell_2)-\mathbf{R}_ag(\ell_2)\varphi_a^{(1)}(\ell_1)}{\|a\|_{L^2(I)}}\cdot 1,
		\end{equation}
		is a right inverse for $\mathbf{L}_a^D$.
	\end{lemma}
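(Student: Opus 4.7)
The plan is to verify directly that $\mathbf{R}_a^D g$ satisfies the three conditions defining a right inverse of $\mathbf{L}_a^D$: membership in the domain $\mathscr{D}_p(\mathbf{L}_a^D)$, the identity $\mathbf{L}_a \mathbf{R}_a^D g = g$, and vanishing at the endpoints $\ell_1,\ell_2$ (which together with $W^{1,1}$-regularity places the function in $W^{1,1}_0(I)$).

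First I would observe that each of the three summands in~\eqref{eq:rightinversedirichlet} lies in $\mathscr{D}_p(\mathbf{L}_a)$: for $\mathbf{R}_a g$ this is Remark~\ref{Remark:nonhomogenouseq}, while $\varphi_a^{(1)}$ and the constant $1 = \varphi_a^{(0)}$ are $\lambda=0$ solutions of~\eqref{eq:impedanceeq} by~\eqref{eq:Lbasis} and hence also belong to $\mathscr{D}_p(\mathbf{L}_a)$. Since the domain is a linear subspace, $\mathbf{R}_a^D g$ lies in $\mathscr{D}_p(\mathbf{L}_a)$. Applying $\mathbf{L}_a$ termwise then yields
\[
\mathbf{L}_a \mathbf{R}_a^D g \;=\; \mathbf{L}_a \mathbf{R}_a g \,+\, c_1 \mathbf{L}_a \varphi_a^{(1)} \,+\, c_0 \mathbf{L}_a \varphi_a^{(0)} \;=\; g + 0 + 0 \;=\; g,
\]
where $c_1,c_0$ denote the two constants multiplying $\varphi_a^{(1)}$ and $1$ in~\eqref{eq:rightinversedirichlet}, using $\mathbf{L}_a \mathbf{R}_a g = g$ from Remark~\ref{Remark:nonhomogenouseq} and the vanishing of $\mathbf{L}_a \varphi_a^{(j)}$ for $j=0,1$.

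For the Dirichlet conditions I would evaluate~\eqref{eq:rightinversedirichlet} at the endpoints. Setting $A := \mathbf{R}_a g(\ell_1)$, $B := \mathbf{R}_a g(\ell_2)$, $\alpha := \varphi_a^{(1)}(\ell_1)$, $\beta := \varphi_a^{(1)}(\ell_2)$, the required identities $A + c_1 \alpha + c_0 = 0$ and $B + c_1 \beta + c_0 = 0$ form a $2\times 2$ linear system for $(c_1,c_0)$ whose determinant equals
\[
\beta - \alpha \;=\; \int_{\ell_1}^{\ell_2} \frac{dt}{a^2(t)} \;=\; \bigl\|1/a\bigr\|_{L^2(I)}^{2},
\]
a strictly positive quantity because $1/a\in L^2(I)$. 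Hence the system is uniquely solvable, and its solution matches the coefficients appearing in~\eqref{eq:rightinversedirichlet}.

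Finally, since $\mathbf{R}_a^D g \in \mathscr{D}_p(\mathbf{L}_a) \subset W^{1,1}(I)$ is continuous on $\overline{I}$ with vanishing boundary values, it belongs to $W^{1,1}_0(I)$ and therefore to $\mathscr{D}_p(\mathbf{L}_a^D) = \mathscr{D}_p(\mathbf{L}_a) \cap W^{1,1}_0(I)$. Continuity of the operator $\mathbf{R}_a^D$ from $L_a^p(I)$ to itself then follows because $\mathbf{R}_a \in \mathcal{B}(L_a^p(I))$ and the evaluation maps $u\mapsto u(\ell_j)$ on $\mathscr{D}_p(\mathbf{L}_a)$ are continuous. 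The only non-routine ingredient is the positivity $\beta-\alpha>0$, which guarantees solvability of the boundary system; this is precisely the point at which the proper-impedance hypothesis $1/a \in L^2(I)$ is used.
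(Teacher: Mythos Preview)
Your proof is correct and follows essentially the same route as the paper: write $\mathbf{R}_a^Dg=\mathbf{R}_ag+c_1\varphi_a^{(1)}+c_0$, observe that $\mathbf{L}_a$ annihilates the added terms so $\mathbf{L}_a\mathbf{R}_a^Dg=g$, and then determine $c_0,c_1$ from the Dirichlet conditions via the nonvanishing determinant $\varphi_a^{(1)}(\ell_2)-\varphi_a^{(1)}(\ell_1)=\int_{\ell_1}^{\ell_2}a^{-2}(t)\,dt>0$. Your computation of this determinant as $\|1/a\|_{L^2(I)}^{2}$ is in fact sharper than what is written in the displayed formula~\eqref{eq:rightinversedirichlet}, where the denominator $\|a\|_{L^2(I)}$ appears to be a typo for $\|1/a\|_{L^2(I)}^{2}$; both your argument and the paper's skip over this discrepancy by asserting that the solved coefficients ``match'' the stated formula.
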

	\begin{proof}
		Given $g\in L_a^p(I)$, consider the function
		\[
		G=\mathbf{R}_ag+c_1\varphi_a^{(1)}+c_0
		\]
		which satisfies $G\in \mathscr{D}_p(\mathbf{L}_a)$ and $\mathbf{L}_aG=g$. Now, we look for $c_0,c_1\in \mathbb{C}$ such that $G(\ell_1)=G(\ell_2)=0$. This leads to the system of equations
		\begin{align*}
			-\mathbf{R}_ag(\ell_1) &= c_0+c_1\varphi_a^{(1)}(\ell_1),\\
			-\mathbf{R}_ag(\ell_2) &= c_0+c_1\varphi_a^{(1)}(\ell_2)
		\end{align*}
		The coefficient matrix has determinant
		\begin{align*}
			\begin{vmatrix}
				1 & \varphi_a^{(1)}(\ell_1),\\
				1 & \varphi_a^{(1)}(\ell_2)
			\end{vmatrix} &= \varphi_a^{(1)}(\ell_2)-\varphi_a^{(1)}(\ell_1)= \int_{x_0}^{\ell_2}\frac{dt}{a^2(t)}-\int_{x_0}^{\ell_1}\frac{dt}{a^2(t)}= \int_{\ell_1}^{\ell_2}\frac{dt}{a^2(t)}>0.
		\end{align*}
		Since $a$ is positive, then the system has a unique solution and a straightforward computation shows that these values of $c_0,c_1$ yield exactly the formula for $G=\mathbf{R}_a^Dg$ given in \eqref{eq:rightinversedirichlet}. Since
		\[
		\|\mathbf{R}_ag\|_{L^{\infty}(I)}\leq \|a^{-1}\|_{L^2(I)}\|a\|_{L^2(I)}^{\frac{p-1}{p}}\|g\|_{L_a^p(I)},
		\]
		we conclude that $\mathbf{R}_a^D\in \mathcal{B}(L_a^p(I))$.
	\end{proof}

	\begin{lemma}\label{lemma:denseoperator}
		For $1<p<\infty$, the operator $\mathbf{L}_a^D$ is densely defined on $L_a^p(I)$.
	\end{lemma}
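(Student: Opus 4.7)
The plan is to identify the domain $\mathscr{D}_p(\mathbf{L}_a^D)$ with the range $R(\mathbf{R}_a^D)$ of the right inverse from Lemma \ref{lemma:invertibledirichletoperator}, and then to establish density of that range in $L_a^p(I)$ via Hahn--Banach duality, using the symmetry of the Green kernel of the Dirichlet problem.

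First I would show $\mathscr{D}_p(\mathbf{L}_a^D) = R(\mathbf{R}_a^D)$. The inclusion $R(\mathbf{R}_a^D) \subseteq \mathscr{D}_p(\mathbf{L}_a^D)$ is the content of Lemma \ref{lemma:invertibledirichletoperator}. Conversely, for $u \in \mathscr{D}_p(\mathbf{L}_a^D)$ set $g := \mathbf{L}_a u \in L_a^p(I)$; then $v := u - \mathbf{R}_a^D g$ lies in $\mathscr{D}_p(\mathbf{L}_a^D)$ and satisfies $\mathbf{L}_a v = 0$, hence $v = \alpha + \beta \varphi_a^{(1)}$ for some $\alpha, \beta \in \mathbb{C}$. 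The Dirichlet conditions together with $\varphi_a^{(1)}(\ell_2)-\varphi_a^{(1)}(\ell_1) = \int_{\ell_1}^{\ell_2} dt/a^2 > 0$ (already used in the previous lemma) force $\alpha = \beta = 0$, so $u = \mathbf{R}_a^D g \in R(\mathbf{R}_a^D)$.

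Next I would realize $\mathbf{R}_a^D$ as an integral operator with a symmetric kernel. Fubini applied to the Volterra definition of $\mathbf{R}_a$, combined with the rank-two boundary correction appearing in \eqref{eq:rightinversedirichlet}, yields a representation $\mathbf{R}_a^D g(x) = \int_I G(x, t) g(t) a^2(t)\, dt$ with $G$ bounded and continuous on $\overline{I}\times\overline{I}$. The crucial property is the symmetry $G(x, t) = G(t, x)$, which I would deduce from the standard Green's-function construction $G(x, t) = u_1(\min(x, t)) u_2(\max(x, t))/W$, where $u_1, u_2$ are solutions of $\mathbf{L}_a u = 0$ with $u_1(\ell_1) = 0$ and $u_2(\ell_2) = 0$ (each a combination of $1$ and $\varphi_a^{(1)}$) and $W$ is their constant $a$-Wronskian. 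Equivalently, the symmetry reflects the formal self-adjointness $\int_I (\mathbf{L}_a u)\bar v \, a^2 dx = \int_I u\, \overline{\mathbf{L}_a v} \, a^2 dx$ for $u, v \in \mathscr{D}_p(\mathbf{L}_a^D)$, obtained by integrating by parts and using the Dirichlet conditions to annihilate the boundary terms.

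Finally I would close the argument via Hahn--Banach. Let $h \in L_a^{p'}(I) \cong (L_a^p(I))^*$ annihilate $\mathscr{D}_p(\mathbf{L}_a^D) = R(\mathbf{R}_a^D)$, so that $\int_I (\mathbf{R}_a^D g)(x)h(x) a^2(x)\, dx = 0$ for every $g \in L_a^p(I)$. Fubini together with $G(x, t) = G(t, x)$ rewrites this as $\int_I g(t)(\mathbf{R}_a^D h)(t) a^2(t)\, dt = 0$ for every such $g$, whence $\mathbf{R}_a^D h = 0$ in $L_a^{p'}(I)$. Since formula \eqref{eq:rightinversedirichlet} is independent of $p$, the same proof yields $\mathbf{R}_a^D \in \mathcal{B}(L_a^{p'}(I))$ with $\mathbf{L}_a^D\mathbf{R}_a^D = \mathbf{I}$ on $L_a^{p'}(I)$; applying $\mathbf{L}_a^D$ to $\mathbf{R}_a^D h = 0$ forces $h = 0$. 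The annihilator of $\mathscr{D}_p(\mathbf{L}_a^D)$ being trivial, Hahn--Banach concludes that $\mathscr{D}_p(\mathbf{L}_a^D)$ is dense in $L_a^p(I)$. The hardest part is justifying the symmetry of $G$ under the minimal regularity $a, a^{-1} \in L^2(I)$; the Green's-function route via two linearly independent zero-energy solutions with complementary Dirichlet conditions appears more transparent than a direct computation from \eqref{eq:rightinversedirichlet}.
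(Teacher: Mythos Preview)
Your proof is correct and follows the same duality strategy as the paper, but the paper's execution is more streamlined. Both arguments take $h\in L_a^{p'}(I)$ annihilating the domain, form $V=\mathbf{R}_a^D h$, and deduce $V=0$ (hence $h=0$) from the symmetry of the Green operator. The difference is how that symmetry is obtained: you construct the Green kernel $G$ explicitly and invoke Fubini, whereas the paper bypasses the kernel entirely and uses a single integration by parts. For $\Phi=\mathbf{R}_a^D\phi$ with $\phi\in C_0^\infty(I)$ and $V=\mathbf{R}_a^D h$, the Dirichlet boundary conditions give
\[
0=\int_I a^2\,\Phi\, h=\int_I a^2\,\Phi\,\mathbf{L}_aV=\int_I a^2\,(\mathbf{L}_a\Phi)\,V=\int_I a^2\,\phi\, V,
\]
whence $V=0$. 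This is exactly the formal self-adjointness you mention parenthetically, applied directly; it avoids your self-identified ``hardest part'' (justifying the symmetry of $G$ under the minimal hypothesis $a,a^{-1}\in L^2$) and also makes your preliminary identification $\mathscr{D}_p(\mathbf{L}_a^D)=R(\mathbf{R}_a^D)$ unnecessary, since testing against the subfamily $\{\mathbf{R}_a^D\phi:\phi\in C_0^\infty(I)\}\subset\mathscr{D}_p(\mathbf{L}_a^D)$ already suffices. Your Green-function route is a perfectly valid alternative and has the merit of making the resolvent structure explicit, but the paper's integration-by-parts shortcut is both shorter and requires no additional justification.
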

	
	\begin{proof}
		Let $v\in L_a^{p'}(I)$, with $p'=\frac{p}{p-1}$. Suppose that
		\begin{equation}\label{eq:auxiliar8}
			\int_{I}a^2(x)u(x)v(x)dx=0\qquad \forall u\in \mathscr{D}(\mathbf{L}_a^D).
		\end{equation}
		Let $\phi\in C_0^{\infty}(I)$ be arbitrary. By Lemma \ref{lemma:invertibledirichletoperator}, $\Phi=\mathbf{R}_a^D\phi \in \mathscr{D}_p(\mathbf{L}_a^D)$ is such that $\mathbf{L}_a\Phi=\phi$. Similarly, $V=\mathbf{R}_a^Dv\in \mathscr{D}_{p'}(\mathbf{L}_a^D)$ and $\mathbf{L}_aV=v$. Integration by parts and the boundary conditions in \eqref{eq:auxiliar1} yield
		\begin{equation}
			0=\int_{I}a^2\Phi v=\int_Ia^2\Phi (\mathbf{L}_aV)=\int_{I}a^2(\mathbf{L}_a\Phi) V=\int_{I}a^2\phi V.
		\end{equation}
		Since this relation holds for all $\phi\in C_0^{\infty}(I)$, we obtain that $V=0$ a.e. in $I$, and consequently $v=\mathbf{L}_aV=0$. Since $L_a^{p'}(I)$ is isometrically isomorphic to the dual of $L_a^p(I)$, we conclude that $\mathscr{D}(\mathbf{L}_a^D)$ is dense in $L_a^p(I)$. 
	\end{proof}
	
	\begin{lemma}\label{lemma:densesubspace2}
		For $1<p<\infty$, the subspace $\mathbf{R}_a^D(C_0(I))$ is dense in $\mathscr{D}_p(\mathbf{L}_a^D)$. Consequently, $\mathbf{R}_a^D(C_0^{\infty}(I))$ is dense in $L_a^p(I)$.
	\end{lemma}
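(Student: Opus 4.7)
The plan is to identify $\mathbf{R}_a^D$ as a topological isomorphism between $L_a^p(I)$ and $\mathscr{D}_p(\mathbf{L}_a^D)$ endowed with the graph norm, and then transport density along this bijection. The second assertion will then follow by combining the first with Lemma \ref{lemma:denseoperator} and the fact that the graph norm dominates the $L_a^p$-norm.

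First I would show that $\mathbf{R}_a^D : L_a^p(I) \to \mathscr{D}_p(\mathbf{L}_a^D)$ is a bijection. Lemma \ref{lemma:invertibledirichletoperator} already gives $\mathbf{L}_a^D \mathbf{R}_a^D = \mathbf{I}$, so $\mathbf{R}_a^D$ is injective. For surjectivity, fix $u \in \mathscr{D}_p(\mathbf{L}_a^D)$, put $g = \mathbf{L}_a u$, and consider $v = u - \mathbf{R}_a^D g$. Then $\mathbf{L}_a v = 0$ and $v(\ell_1) = v(\ell_2) = 0$. The general solution of $\mathbf{L}_a v = 0$ has the form $c_0 + c_1 \varphi_a^{(1)}$ (since $\mathbf{L}_a \varphi_a^{(0)} = \mathbf{L}_a \varphi_a^{(1)} = 0$), and the Dirichlet conditions yield a linear system whose coefficient matrix has determinant $\varphi_a^{(1)}(\ell_2) - \varphi_a^{(1)}(\ell_1) = \int_{\ell_1}^{\ell_2} a^{-2}\,dt > 0$, precisely the computation already performed in Lemma \ref{lemma:invertibledirichletoperator}. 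This forces $c_0 = c_1 = 0$, hence $v \equiv 0$ and $u = \mathbf{R}_a^D g$. Endowing $\mathscr{D}_p(\mathbf{L}_a^D)$ with the graph norm $\|u\|_{L_a^p} + \|\mathbf{L}_a u\|_{L_a^p}$, the bijection is bounded in both directions since $\mathbf{R}_a^D \in \mathcal{B}(L_a^p(I))$ and $\mathbf{L}_a \mathbf{R}_a^D g = g$.

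For the first assertion, since $C_0(I)$ contains $C_c^\infty(I)$, which is dense in $L_a^p(I)$ (smooth compactly supported functions are dense in $L^p$ of any finite Borel measure, and $a^2\,dx$ is finite because $a \in L^2(I)$), it follows that $C_0(I)$ itself is dense in $L_a^p(I)$. By continuity of the isomorphism, $\mathbf{R}_a^D(C_0(I))$ is then dense in $\mathbf{R}_a^D(L_a^p(I)) = \mathscr{D}_p(\mathbf{L}_a^D)$ with respect to the graph norm. The same argument with $C_0^\infty(I)$ in place of $C_0(I)$ yields that $\mathbf{R}_a^D(C_0^\infty(I))$ is dense in $\mathscr{D}_p(\mathbf{L}_a^D)$ in graph norm; since the graph norm dominates $\|\cdot\|_{L_a^p}$, this density persists in $L_a^p$-norm, and Lemma \ref{lemma:denseoperator} then closes the chain by providing the density of $\mathscr{D}_p(\mathbf{L}_a^D)$ in $L_a^p(I)$.

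I do not foresee any serious obstacle; the argument is entirely structural. The only point requiring a small verification is the triviality of $\ker \mathbf{L}_a^D$, and this is already implicit in the determinant computation of Lemma \ref{lemma:invertibledirichletoperator}.
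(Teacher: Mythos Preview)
Your argument is correct and follows essentially the same strategy as the paper: both proofs rest on the identity $u=\mathbf{R}_a^D\mathbf{L}_a^Du$ for $u\in\mathscr{D}_p(\mathbf{L}_a^D)$, the density of $C_0(I)$ (resp.\ $C_0^{\infty}(I)$) in $L_a^p(I)$, and the boundedness of $\mathbf{R}_a^D$, together with Lemma~\ref{lemma:denseoperator}. The only difference is packaging: you frame the argument as transporting density along a topological isomorphism onto $\mathscr{D}_p(\mathbf{L}_a^D)$ equipped with the graph norm, whereas the paper gives a direct $\varepsilon$-argument in the $L_a^p$-norm; you also spell out the verification of $\ker\mathbf{L}_a^D=\{0\}$ (needed for $u=\mathbf{R}_a^D\mathbf{L}_a^Du$), which the paper uses without comment.
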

	\begin{proof}
		Let $g\in \mathscr{D}_p(\mathbf{L}_a)$ and $\varepsilon>0$.  Since $a^2(x)dx$ is a finite Radon measure on $I$, it follows that $C_0(I)$ is dense in $L_a^p(I)$ \cite[Ch. 7, Prop. 7.9]{folland}. Because $\mathbf{L}_ag\in L_a^p(I)$, there exists $\psi\in C_0(I)$ such that $\|\mathbf{L}_ag-\psi\|_{L_a^p(I)}<\frac{\varepsilon}{M}$, where $M$ is the operator-norm of $\mathbf{R}_a^D$. Since $g=\mathbf{R}_a^D\mathbf{L}_ag$, we have
		\[
		\|g-\mathbf{R}_a^D\psi\|_{L_a^p(I)}\leq M \|\mathbf{L}_ag-\psi\|_{L_a^p(I
			)}<\varepsilon.
		\]
		Thus, $\mathbf{R}_a^D(C_0(I))$ is dense in $\mathscr{D}_p(\mathbf{L}_a^D)$. The density of $\mathbf{R}_a^D(C_0(I))$ in $L_a^p(I)$ follows from Lemma \ref{lemma:denseoperator}. Since $C_0^{\infty}(I)$ is dense in $C_0(I)$ with the uniform norm and the embedding $C_0(I)\hookrightarrow L_a^p(I)$ is continuous, it follows that  $\mathbf{R}_a^D(C_0^{\infty}(I))$ is dense in $L_a^p(I)$.
	\end{proof}

	We recall the following result for the Sturm-Liouville problem with Dirichlet-to-Dirichlet condition on $L_a^2(I)$.
	
	\begin{theorem}\label{Th:Egeinexpasion}
		The Sturm-Liouville problem consisting of Eq. \eqref{eq:impedanceeq} with the Dirichlet-to-Dirichlet conditions
		\[
		u(\ell_1)=u(\ell_2)=0,
		\]
		admits a countable set of simple eigenvalues $\{\lambda_n\}_{n=1}^{\infty}$
		with corresponding normalizing (with respect to the norm of $L_a^2(I)$) eigenfunctions $\{\phi_n\}_{n=0}^{\infty}$, which form orthonormal basis for $L_a^2(I)$.
		
		Furthermore, if $u\in \mathscr{D}_2(\mathbf{L}_a^D)$ , then the Fourier series expansion
		\[
		u(x)=\sum_{n=1}^{\infty}\tilde{u}_n\phi_n(x),\;\;\; \text{where}\quad  \tilde{u}_n:=\langle u,\phi_n\rangle_{L_a^2(I)},\; n\in \mathbb{N},
		\]
		converges uniformly on $\overline{I}$.
	\end{theorem}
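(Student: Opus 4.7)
The plan is to deduce everything from the spectral theorem applied to the resolvent-like operator $\mathbf{R}_a^D$ constructed in Lemma~\ref{lemma:invertibledirichletoperator}, and then transfer the conclusions to the eigenvalue problem for $\mathbf{L}_a^D$. Concretely, I would first verify that $\mathbf{R}_a^D\in\mathcal{B}(L_a^2(I))$ is \emph{compact and self-adjoint} with trivial kernel. For compactness, the key observation is that the bound $\|\mathbf{R}_a g\|_{L^\infty(I)} \leq \|a^{-1}\|_{L^2(I)}\|a\|_{L^2(I)}\|g\|_{L_a^2(I)}$ already established in Lemma~\ref{lemma:invertibledirichletoperator}, together with the estimate
\[
|\mathbf{R}_a g(x)-\mathbf{R}_a g(y)| \;\leq\; \|a\|_{L^2(I)}\,\|g\|_{L_a^2(I)}\,\Bigl|\int_y^x \tfrac{dt}{a^2(t)}\Bigr|,
\]
shows that the image of the unit ball under $\mathbf{R}_a$ (hence under $\mathbf{R}_a^D$, which differs by a finite-rank correction) is uniformly bounded and equicontinuous on $\overline{I}$, so Arzelà--Ascoli gives compactness in $C(\overline{I})\hookrightarrow L_a^2(I)$. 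For self-adjointness, I would verify the symmetry of $\mathbf{L}_a^D$ on its domain by integration by parts (the boundary terms $[a^2 u'\bar v]_{\ell_1}^{\ell_2}$ vanish thanks to the Dirichlet conditions), and then use $\mathbf{R}_a^D$ as the two-sided inverse of $\mathbf{L}_a^D$ on $L_a^2(I)$ to transfer symmetry. Injectivity of $\mathbf{R}_a^D$ is immediate from $\mathbf{L}_a^D\mathbf{R}_a^D=\mathbf{I}$.

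The Hilbert--Schmidt-style spectral theorem then yields a countable family $\{\mu_n\}$ of real eigenvalues of $\mathbf{R}_a^D$ tending to $0$, together with an orthonormal basis $\{\phi_n\}$ of $L_a^2(I)$ of corresponding eigenfunctions, all lying in $\mathscr{D}_2(\mathbf{L}_a^D)$. Setting $\lambda_n=-1/\mu_n$ produces the required eigenpairs of the Sturm--Liouville problem $\mathbf{L}_a\phi_n = -\lambda_n\phi_n$ with $\phi_n(\ell_1)=\phi_n(\ell_2)=0$. Simplicity of each $\lambda_n$ follows from ODE uniqueness: any two eigenfunctions with eigenvalue $\lambda_n$ both vanish at $\ell_1$, and so by the Cauchy problem uniqueness in Theorem~2 both are scalar multiples of the solution with $\mathbf{D}_a u(\ell_1)=1$.

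For the uniform convergence statement, suppose $u\in\mathscr{D}_2(\mathbf{L}_a^D)$ and set $v=\mathbf{L}_a u\in L_a^2(I)$. Then $u=\mathbf{R}_a^D v$, and for the Fourier coefficients we have $\tilde{u}_n=\langle\mathbf{R}_a^Dv,\phi_n\rangle_{L_a^2}=\mu_n\tilde{v}_n=-\tilde{v}_n/\lambda_n$. Writing the tail as
\[
u(x)-\sum_{n=1}^N \tilde{u}_n\phi_n(x) \;=\; \mathbf{R}_a^D\!\Bigl(\sum_{n>N}\tilde{v}_n\phi_n\Bigr)(x),
\]
and invoking the bound $\|\mathbf{R}_a^D g\|_{L^\infty(I)}\leq M\|g\|_{L_a^2(I)}$ together with Parseval $\sum_{n>N}|\tilde{v}_n|^2\to 0$, I obtain uniform convergence on $\overline{I}$.

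The most delicate point will be the compactness step, since the standard Sobolev embedding into $C(\overline{I})$ is not available when $a$ is only $L^2$; the argument must be carried out by hand using the absolute continuity of the indefinite integral of the $L^1$ function $1/a^2$ in order to obtain equicontinuity. The remaining steps are essentially formal once this is in place.
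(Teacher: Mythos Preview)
Your argument is correct and self-contained. The paper, by contrast, does not give an independent proof at all: it simply observes that under the hypothesis $a^2,\,a^{-2}\in L^1(I)$ the Dirichlet problem is regular and then cites \cite[Th.~2.7.4]{miklavic} for the spectral part and \cite[Cor.~4.3.3]{benewitz} for the uniform expansion. Your route is therefore genuinely different in that it reproves those cited facts in situ, and it does so economically by exploiting the operator $\mathbf{R}_a^D$ already built in Lemma~\ref{lemma:invertibledirichletoperator}: compactness via Arzel\`a--Ascoli (using only absolute continuity of $x\mapsto\int^x a^{-2}$), self-adjointness transferred from the symmetry of $\mathbf{L}_a^D$, and the uniform-convergence step handled by the clean bound $\|\mathbf{R}_a^D\|_{L_a^2\to L^\infty}<\infty$. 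What you gain is a proof internal to the paper that dovetails with its own constructions; what the paper gains by citing is brevity and the slightly stronger background results (e.g.\ eigenvalue asymptotics) that the references also provide but which are not needed here. One cosmetic remark: what you call the ``Hilbert--Schmidt-style spectral theorem'' is just the spectral theorem for compact self-adjoint operators; no Hilbert--Schmidt estimate is required.
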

	\begin{proof}
		Since $a^2,\frac{1}{a^2}\in L_1(I)$, the problem with Dirichlet-to-Dirichlet conditions is regular, so the first part of the result follows from \cite[Th. 2.7.4]{miklavic}. The expansion theorem is a consequence of \cite[Cor. 4.3.3]{benewitz}.
	\end{proof}
	\begin{theorem}\label{Th:CompletenessformalLp}
		For $1\leq p<\infty$, the formal powers $\{\varphi_a^{(k)}\}_{k=0}^{\infty}$ are a complete system in $L_a^p(I)$.
	\end{theorem}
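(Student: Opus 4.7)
The plan is to pass through the Dirichlet Sturm--Liouville problem of Theorem~\ref{Th:Egeinexpasion} and combine its eigenfunction expansion with the uniform convergence of the SPPS series of Theorem~\ref{Th:SPPS}. The key preliminary observation is that the partial sums of the SPPS series for $C_a(\rho,\cdot)$ and $S_a(\rho,\cdot)$ are themselves elements of $\Pi_a(\overline{I})$ and converge uniformly on $\overline{I}$. Since every Dirichlet eigenfunction $\phi_n$ is a linear combination of $C_a(\rho_n,\cdot)$ and $S_a(\rho_n,\cdot)$ with $\lambda_n=\rho_n^2$, it follows that each $\phi_n$ is the uniform limit on $\overline{I}$ of a sequence of formal polynomials. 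Because $a^2\,dx$ is a finite measure, uniform convergence on $\overline{I}$ implies convergence in $L_a^p(I)$ for every $1\le p<\infty$, so each $\phi_n$ lies in the $L_a^p$-closure of $\Pi_a(\overline{I})$.

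For $p=2$ the orthonormal basis property of $\{\phi_n\}$ in $L_a^2(I)$ from Theorem~\ref{Th:Egeinexpasion} immediately yields $\overline{\Pi_a(\overline{I})}^{L_a^2}=L_a^2(I)$. To obtain the range $1\le p<2$, I would use the continuous embedding $L_a^2(I)\hookrightarrow L_a^p(I)$, which follows from H\"older's inequality and the finiteness of $a^2\,dx$, together with the density of $L_a^2(I)$ in $L_a^p(I)$ (since $C_c(I)\subset L_a^2(I)\cap L_a^p(I)$ and $C_c(I)$ is $L_a^p$-dense). A two-step approximation--first approximating $f\in L_a^p(I)$ by $g\in L_a^2(I)$ in $L_a^p$, then approximating $g$ by a formal polynomial in $L_a^2$, which by the embedding is also an $L_a^p$-approximation--completes the case $p<2$.

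The case $p>2$ cannot be handled by such an embedding (the norms go the wrong way), and this is the main obstacle. The plan for this range is to invoke Lemma~\ref{lemma:densesubspace2}, which gives the $L_a^p$-dense subspace $\mathbf{R}_a^D(C_0(I))$. For $u=\mathbf{R}_a^D\phi$ with $\phi\in C_0(I)\subset L_a^2(I)$ (using again the finiteness of the measure), one has $u\in\mathscr{D}_2(\mathbf{L}_a^D)$, so by Theorem~\ref{Th:Egeinexpasion} its Fourier expansion $u=\sum_n\widetilde u_n\phi_n$ converges uniformly on $\overline{I}$. An $\varepsilon/2$ argument--truncate the Fourier series to within $\varepsilon/2$ in the uniform norm, then use Step~1 to approximate each of the finitely many $\phi_n$ appearing in the truncation by a formal polynomial in the uniform norm, with total error controlled by $\varepsilon/2$--shows that $u$ is a uniform, hence $L_a^p$, limit of elements of $\Pi_a(\overline{I})$. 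Density of $\mathbf{R}_a^D(C_0(I))$ in $L_a^p(I)$ then finishes the case $p>2$.

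The technical heart of the argument is thus the interaction between two types of convergence in the range $p>2$: the $L_a^2$-theory provides a basis of eigenfunctions, but we need the stronger uniform convergence of the Fourier expansion (from the expansion part of Theorem~\ref{Th:Egeinexpasion}) and the uniform convergence of the SPPS series (Theorem~\ref{Th:SPPS}) in order to convert abstract $L_a^2$-completeness into $L_a^p$-completeness. The remaining verifications--continuity of the embedding $L_a^2\hookrightarrow L_a^p$, density of $C_c(I)$, and the $\varepsilon/2$ bookkeeping--are routine.
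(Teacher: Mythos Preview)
Your proposal is correct and follows essentially the same route as the paper: approximate Dirichlet eigenfunctions uniformly by partial sums of their SPPS expansions (Theorem~\ref{Th:SPPS}), then reach a general $f$ by passing through the uniformly convergent eigenfunction expansion of a function in $\mathscr{D}_2(\mathbf{L}_a^D)$ (Theorem~\ref{Th:Egeinexpasion}), with Lemma~\ref{lemma:densesubspace2} supplying such functions densely in $L_a^p$. The only difference is organizational: you split into the three cases $p=2$, $1\le p<2$, and $p>2$, whereas the paper observes that your argument for $p>2$ in fact works verbatim for every $p>1$ (Lemma~\ref{lemma:densesubspace2} is stated for $1<p<\infty$), so the separate treatment of $p=2$ and the embedding step for $1<p<2$ are unnecessary; only $p=1$ needs the embedding $L_a^2\hookrightarrow L_a^1$.
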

	\begin{proof}
		First, consider $p>1$. Let $f\in L_a^p(I)$ and $\varepsilon>0$. By Lemma \ref{lemma:densesubspace2}, there exists $\psi\in C_0(I)$ with $\Psi=\mathbf{R}_a^D\psi\in \mathscr{D}_p(\mathbf{L}_a^D)$ such that
		
		\begin{equation}\label{eq:auxiliar0}
			\|f-\Psi\|_{L_a^p(I)}<\frac{\varepsilon}{2}.
		\end{equation}
		Note that $\Psi \in C(\overline{I})\subset L_a^2(I)$ with $\mathbf{L}_a\Psi =\psi \in C_0(I)\subset L_a^2(I)$, so $\Psi\in \mathscr{D}_2(\mathbf{L}_a^D)$. Thus, by the second part of Theorem \ref{Th:Egeinexpasion}, there exist $\phi_1,\dots, \phi_N$ eigenfunctions of the Sturm-Liouville problem of Eq. \eqref{eq:impedanceeq} with the Dirichlet-to-Dirichlet conditions such that
		\begin{equation}\label{eq:auxliar1}
			\left\|\Psi-\sum_{n=1}^{N}\widetilde{\Psi}_n\phi_n\right\|_{L^{\infty}(I)}<\frac{\varepsilon}{4\|a\|_{L^2(I)}},\quad \text{where }\; \widetilde{\Psi}_n=\langle \Psi,\phi_n\rangle_{L_a^2(I)}, \; n=1,\dots, N.
		\end{equation}
		Since for each eigenvalue $\lambda_n=\rho_n^2$,  the pair $\{e_a(\rho_n,x),e_a(-\rho_n,x)\}$ forms a fundamental set of solutions for Eq. \eqref{eq:impedanceeq}, every eigenfunction $\phi_n$  is a linear combination of $e_a(\rho_n,x)$ and $e_a(-\rho_n,x)$, an according to Theorem \ref{Th:SPPS} they can be expanding as a uniformly convergent series of formal powers $\{\varphi_a^{(k)}\}_{k=0}^{\infty}$. Consequently, for each $n=1,\dots, N$, there exist constants $\alpha_{0,1},\dots, \alpha_{M_N,N}\in \mathbb{C}$ such that
		\begin{equation}\label{eq:auxiliar3}
			\left\| \phi_n-\sum_{k=0}^{M_N}\alpha_{k,n}\varphi_a^{(k)}\right\|_{L^{\infty}(I)}<\frac{\varepsilon}{4\|a\|_{L^2(I)}CN},
		\end{equation}
		where $C=\max\{1,|\widetilde{\Psi}_1|,\dots, |\widetilde{\Psi}_n|\}$. Set $\widehat{p}=\sum_{n=1}^{N}\sum_{k=0}^{M_n}\widetilde{\Psi}_n\alpha_{k,n}\varphi_a^{(k)}\in \Pi_a(\overline{I})$. From inequalities \eqref{eq:auxliar1} and \eqref{eq:auxiliar3}, we obtain:
		\begin{align*}
			\|\Psi-\widehat{p}\|_{L^{\infty}(I)} & \leq \left\|\Psi-\sum_{n=1}^N\widetilde{\Psi}_n\phi_n\right\|_{L^{\infty}(I)}+\sum_{n=1}^N|\widetilde{\Psi}_n|\left\|\phi_n-\sum_{k=0}^{M_n}\alpha_{k,n}\varphi_a^{(k)}\right\|_{L^{\infty}(I)} <\frac{\varepsilon}{2\|a\|_{L^2(I)}}.
		\end{align*}
		Thus,
		\begin{align*}
			\|f-\widehat{p}\|_{L_a^p(I)}\leq\|f-\Psi\|_{L_a^p(I)}+\|\Psi-\widehat{p}\|_{L_a^p(I)}<\varepsilon.
		\end{align*}
		Therefore, $\Pi_a(\overline{I})$ is dense in $L_a^p(I)$. For $p=1$, note that the embedding $L_a^2(I)\hookrightarrow L_a^1(I)$ is continuous and  since $C_0(I)\subset L_a^2(I)\subset L_a^1(I)$, then $L_a^2(I)$ is dense in $L_a^1(I)$. Therefore, $\Pi_a(\overline{I})$ is dense in $L_a^1(I)$.
	\end{proof}

	\begin{theorem}\label{Th:Completenessformalpowerssobolev}
		Let $1\leq p<\infty$.
		\begin{itemize}
			\item[(i)] If $a,\frac{1}{a}\in L^{\infty}(I)$, then the formal powers $\{\varphi_a^{(k)}\}_{k=0}^{\infty}$ are a complete system in $W^{1,p}(I)$.
			\item[(ii)] For $m\in \mathbb{N}$, $m\geq 2$, if $a\in W^{m-1,p}(I)$, then the formal powers $\{\varphi_a^{(k)}\}_{k=0}^{\infty}$ are a complete system in $W^{m,p}(I)$.
		\end{itemize}
		
	\end{theorem}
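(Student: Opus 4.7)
The strategy is to leverage Theorem~\ref{Th:CompletenessformalLp} (completeness of $\{\varphi_b^{(k)}\}$ in $L^p_b(I)$ for any proper impedance $b$) together with the decomposition $u = u(x_0) + \mathbf{J}_a\mathbf{D}_a u$ supplied by Proposition~\ref{Prop:taylor} in the case $m=1$. Part (i) follows from one application of this recipe, and part (ii) by induction on $m$, alternating between the impedances $a$ and $1/a$ at each step. In both cases the lifting device is Remark~\ref{Remark:operatorIa}(i), which ensures that $\mathbf{J}_a$ sends formal polynomials associated with $1/a$ into formal polynomials associated with $a$.

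For part (i), the hypothesis $a, 1/a \in L^\infty(I)$ makes $L^p_a(I)$, $L^p(I)$ and $L^p_{1/a}(I)$ coincide with equivalent norms. Given $u \in W^{1,p}(I)$, the function $\mathbf{D}_a u = a^2 u'$ lies in $L^p_{1/a}(I)$. By Theorem~\ref{Th:CompletenessformalLp} applied to the proper impedance $1/a$, pick $\widehat q \in \Pi_{1/a}(\overline I)$ with $\|\mathbf{D}_a u - \widehat q\|_{L^p_{1/a}(I)}$ arbitrarily small, and set $P := u(x_0) + \mathbf{J}_a\widehat q \in \Pi_a(\overline I)$ (Remark~\ref{Remark:operatorIa}(i)). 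The Taylor identity yields $u - P = \mathbf{J}_a(\mathbf{D}_a u - \widehat q)$ with derivative $(\mathbf{D}_a u - \widehat q)/a^2$, so the uniform bounds on $a^{\pm 2}$ control both $\|u - P\|_{L^p(I)}$ and $\|(u - P)'\|_{L^p(I)}$, giving $\|u-P\|_{W^{1,p}(I)} \to 0$.

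For part (ii), I would argue by induction on $m \ge 2$. The base case $m = 2$ needs only $a \in W^{1,p}(I)$; continuity on $\overline I$ together with positivity forces $a, 1/a \in L^\infty(I)$, so part (i) applies directly to $\mathbf{D}_a u \in W^{1,p}(I)$ with impedance $1/a$. For the inductive step with $m \ge 3$, the Sobolev embedding $W^{k,p}(I) \hookrightarrow L^\infty(I)$ for $k \ge 1$ and the Banach-algebra property of $W^{m-1,p}(I)$ on a bounded interval yield $a^{\pm 2}, 1/a \in W^{m-1,p}(I)$. Then $f := \mathbf{D}_a u = a^2 u' \in W^{m-1,p}(I)$, and applying the inductive hypothesis to $f$ with impedance $1/a \in W^{m-2,p}(I)$, choose $\widehat q \in \Pi_{1/a}(\overline I)$ that approximates $f$ in $W^{m-1,p}$. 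Setting $P := u(x_0) + \mathbf{J}_a\widehat q \in \Pi_a(\overline I)$, one obtains $u - P = \mathbf{J}_a(f - \widehat q)$ with $(u-P)' = (f - \widehat q)/a^2$, and the algebra estimate $\|h/a^2\|_{W^{m-1,p}} \le C\|h\|_{W^{m-1,p}}$ closes the induction.

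The main technical obstacle is the bookkeeping around the Banach-algebra and reciprocal estimates: one must verify carefully that $1/a$ and $a^{\pm 2}$ belong to $W^{m-1,p}(I)$ with quantitative norm control, which relies on $a$ being bounded below (obtained from continuity and positivity on the compact $\overline I$) and on $W^{k,p}(I)$ being a Banach algebra for $k \ge 1$. Beyond this routine but necessary verification, no step is genuinely delicate: the completeness of $\Pi_{1/a}(\overline I)$ is Theorem~\ref{Th:CompletenessformalLp}, the passage $\mathbf{J}_a\Pi_{1/a}(\overline I)\subset\Pi_a(\overline I)$ is Remark~\ref{Remark:operatorIa}(i), and the exact decomposition is Proposition~\ref{Prop:taylor}.
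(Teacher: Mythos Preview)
Your proof of part~(i) is essentially identical to the paper's: both use the one-step Taylor identity $u=u(x_0)+\mathbf{J}_a\mathbf{D}_au$, approximate $\mathbf{D}_au$ in $L^p$ by elements of $\Pi_{1/a}(\overline I)$ via Theorem~\ref{Th:CompletenessformalLp}, and lift through $\mathbf{J}_a$ using Remark~\ref{Remark:operatorIa}(i).

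For part~(ii) your argument is correct but follows a genuinely different route. The paper does \emph{not} induct on $m$; instead it invokes the full $m$-fold Taylor formula of Proposition~\ref{Prop:taylor},
\[
u=\mathbf{J}_a^{(m)}[\mathbf{D}_a^{(m)}u]+\sum_{k=0}^{m-1}\frac{\mathbf{D}_a^{(k)}u(x_0)}{k!}\varphi_a^{(k)},
\]
approximates $\mathbf{D}_a^{(m)}u$ once at the $L^p$ level by $\widehat p_n\in\Pi_{a^{(-1)^m}}(\overline I)$, and then lifts in a single stroke via the boundedness of $\mathbf{J}_a^{(m)}:L^p(I)\to W^{m,p}(I)$ together with Remark~\ref{Remark:Jam} (which guarantees $\mathbf{J}_a^{(m)}\bigl(\Pi_{a^{(-1)^m}}(\overline I)\bigr)\subset\Pi_a(\overline I)$). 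Your inductive scheme, by contrast, peels off one derivative at a time, alternating between the impedances $a$ and $1/a$, and uses only the first-order lift $\mathbf{J}_a$ and Remark~\ref{Remark:operatorIa}(i). The trade-off: the paper's one-shot argument is shorter and avoids explicit Banach-algebra estimates, but it relies on the higher-order machinery $\mathbf{J}_a^{(m)}$ and Remark~\ref{Remark:Jam}; your induction is conceptually cleaner (it literally iterates part~(i)) and makes the $a\leftrightarrow 1/a$ symmetry transparent, at the cost of the algebra/reciprocal bookkeeping you correctly flag as the main technical point. Both approaches ultimately rest on the same regularity facts for $a^{\pm2}$, just packaged differently.
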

	\begin{proof}
		\begin{itemize}
			\item[(i)] Let $u\in W^{1,p}(I)$. First, note that $u\in \mathscr{D}_p(\mathbf{D}_a)$ and 
			\begin{equation}\label{eq:auxiliar4}
				u=\mathbf{I}_a\left[\mathbf{D}_au\right]+u(x_0).
			\end{equation}
			By Theorem \ref{Th:CompletenessformalLp}, there exists a sequence $\{\widehat{p}_n\}_{n=0}^{\infty}\subset \Pi_{\frac{1}{a}}(\overline{I})$ such that $\widehat{p}_n\rightarrow \mathbf{D}_au$ in $L^p(I)$. By Remark \ref{Remark:operatorIa}, $\mathbf{I}_a\widehat{p}_n\in \Pi_a(\overline{I})$ for all $n\in \mathbb{N}_0$ and $\mathbf{I}_a\widehat{p}_n\rightarrow \mathbf{I}_a[\mathbf{D}_au]$ in $W^{1,p}(I)$. Then the sequence
			\[
			\widehat{q}_n=\mathbf{I}_a\widehat{p}_n+u(x_0)\in \Pi_a(\overline{I}),\quad n\in \mathbb{N}_0,
			\]
			satisfies that $\widehat{q}_n\rightarrow u$ in $W^{1,p}(I)$. Consequently, $\Pi_a(\overline{I})$ is dense in $W^{1,p}(I)$.
			
			\item[(ii)] Given $u\in W^{m,p}(I)$, by Proposition \eqref{Prop:taylor}, $u$ can be written as
			\[
			u=\mathbf{J}_a^{(m)}[\mathbf{D}_a^{(m)}u]+\sum_{k=0}^{m}\frac{\mathbf{D}_a^{(k)}u(x_0)}{k!}\varphi_a^{(k)}.
			\]
			Theorem \ref{Th:CompletenessformalLp} implies that there exists a sequence $\{\widehat{p}_n\}_{n=0}^{\infty}\subset \Pi_{a^{(-1)^m}}(\overline{I})$ with $\widehat{p}_n\rightarrow \mathbf{D}_a^{(m)}u$ in $L^p(I)$. Consider
			\[
			\widehat{q}_n:=\mathbf{J}_a^{(m)}[\widehat{p}_n]+\sum_{k=0}^{m-1}\frac{\mathbf{D}_a^{(k)}u(x_0)}{k!}\varphi_a^{(k)},\;\; n\in \mathbb{N}_0.
			\]
			
			By Remark \ref{Remark:Jam}, $\{\widehat{q}_n\}_{n=0}^{\infty}\subset \Pi_a(\overline{I})$ and since $\mathbf{J}_a^{(m)}\in \mathcal{B}(L^p(I), W^{m,p}(I))$, then $\widehat{q}_n\rightarrow u$ in $W^{m,p}(I)$. $\therefore$ $\Pi_a(\overline{I})$ is complete in $W^{m,p}(I)$, as desired.
			
		\end{itemize}
		
	\end{proof}
	\begin{corollary}\label{coro:polynomialaprox}
		Let $m\in \mathbb{N}$ and $1\leq p <\infty$. For any $u\in W^{m,p}(I)$, there exists a sequence $\{p_n\}_{n=0}^{\infty}\subset \Pi(\overline{I})$ such that
		\begin{equation*}\label{eq:polinomialaprox}
			p_n^{(k)}\rightarrow u^{(k)} \text{ uniformly on } I, k=0,\dots, m-1, \;\; \text{and }\;\; p^{(m)}\rightarrow u^{(m)} \text{ in } L^p(I).
		\end{equation*}
		In particular, $\Pi(\overline{I})$ is complete in $W^{m,p}(I)$.
	\end{corollary}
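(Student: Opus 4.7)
The corollary is essentially a specialization of Theorem \ref{Th:Completenessformalpowerssobolev} to the constant impedance $a\equiv 1$, combined with the one-dimensional Sobolev embedding. My plan is to verify that the hypotheses of both parts of Theorem \ref{Th:Completenessformalpowerssobolev} reduce to the classical polynomial setting when $a\equiv 1$, and then to boost $W^{m,p}$-approximation to uniform approximation of lower-order derivatives.

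First I would observe that for $a\equiv 1$ on $\overline{I}$: the operator $\mathbf{D}_a$ coincides with $d/dx$, so $\mathbf{D}_a^{(m)}=d^m/dx^m$ and $\mathbf{J}_a^{(m)}$ becomes iterated integration from $x_0$; the formal powers become $\varphi_1^{(k)}(x)=(x-x_0)^k$; and consequently $\Pi_a(\overline{I})=\Pi(\overline{I})$. Since $a$ and $1/a$ are identically $1$, they lie in $L^\infty(I)$ and in every $W^{k,p}(I)$. Therefore, for $m=1$, case (i) of Theorem \ref{Th:Completenessformalpowerssobolev} applies, and for $m\geq 2$, case (ii) applies. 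This yields, for any $u\in W^{m,p}(I)$, a sequence $\{p_n\}\subset\Pi(\overline{I})$ such that $p_n\to u$ in $W^{m,p}(I)$.

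From $p_n\to u$ in $W^{m,p}(I)$ it follows that $p_n^{(k)}\to u^{(k)}$ in $W^{1,p}(I)$ for every $k\in\{0,\dots,m-1\}$, and $p_n^{(m)}\to u^{(m)}$ in $L^p(I)$. Invoking the one-dimensional Sobolev embedding $W^{1,p}(I)\hookrightarrow C(\overline{I})$, which holds continuously for any $1\leq p<\infty$ on the bounded interval $I$, we upgrade the convergence $p_n^{(k)}\to u^{(k)}$ in $W^{1,p}(I)$ to uniform convergence on $\overline{I}$ for each $k=0,\dots,m-1$. This yields exactly the conclusion of the corollary, and the completeness of $\Pi(\overline{I})$ in $W^{m,p}(I)$ is immediate from the $W^{m,p}$ convergence.

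There is essentially no main obstacle: the argument is a bookkeeping reduction of Theorem \ref{Th:Completenessformalpowerssobolev} to the trivial impedance, plus a standard Sobolev embedding to promote $L^p$ convergence of derivatives of order $\leq m-1$ to uniform convergence. The only point to double-check is that the case $m=1$ is covered (which it is, by part (i), since $1,1/1\in L^\infty$), and that for $m\geq 2$ the regularity hypothesis $a\in W^{m-1,p}(I)$ is trivially satisfied by the constant function.
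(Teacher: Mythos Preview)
Your proof is correct and follows essentially the same approach as the paper: specialize Theorem~\ref{Th:Completenessformalpowerssobolev} to $a\equiv 1$ so that $\Pi_a(\overline{I})=\Pi(\overline{I})$, and then invoke the Sobolev embedding (the paper phrases it as $W^{m,p}(I)\hookrightarrow C^{m-1}(\overline{I})$, which is equivalent to your derivative-by-derivative use of $W^{1,p}(I)\hookrightarrow C(\overline{I})$).
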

	\begin{proof}
		For $a\equiv 1$, the formal powers coincide with the usual ones. Thus, the result follows from Theorem \ref{Th:Completenessformalpowerssobolev} and the fact that the embedding $W^{m,p}(I)\hookrightarrow C^{m-1}(\overline{I})$ is continuous \cite[p. 217]{brezis}.
	\end{proof}

	\section{Transmutation operators for $a\in W^{1,\infty}(-\ell,\ell)$}
	
	\subsection{Transmutation property}
	
	In this section, we assume that $I=(-\ell,\ell)$, with  $\ell>0$, that $x_0=0$, and  that $a\in W^{1,\infty}(-\ell,\ell)
	$ satisfies the normalization condition that $a(0)=1$. In this case, the operators $\mathbf{I}_a,\mathbf{R}_a, \mathbf{L}_a$ and $\mathbf{D}_a,$ are well defined on the spaces $L^p(-\ell,\ell)$, $W^{1,p}(-\ell,\ell)$ and $W^{2,p}(-\ell,\ell)$ for $1\leq p\leq \infty$.
	
	It is known that for $\frac{a'}{a}\in L^{\infty}(-\ell,\ell)$, the solution $e_a(\rho,x)$ admits the integral representation
	\begin{equation}\label{eq:integralsole}
		e_a(\rho,x)=e^{i\rho x}-i\rho\int_{-x}^{x}K_a(x,t)e^{i\rho t}dt,\qquad -\ell\leq x\leq \ell,
	\end{equation}
	where $K_a(x,t)$ is the Fourier transform of $\frac{e^{i\rho x}-e_a(\rho,x)}{i\rho}$ (see \cite{carroll} and \cite[Sec. 2.1]{mineimpedance1}). Furthermore, the kernel $K_a(x,t)$ satisfies some regularity conditions.

	\begin{theorem}\label{Th:integralrepresentation}
		The integral kernel  $K_a\in W^{1,\infty}(\Omega)$,  where $\Omega=(-\ell,\ell)\times (-\ell,\ell)$, and  satisfies the Goursat conditions
		\begin{equation}\label{eq:Goursatcondition}
			K_a(x,x)=1-\frac{1}{a(x)},\quad K_a(x,-x)=0,\qquad -\ell\leq x\leq \ell.
		\end{equation}
	
\end{theorem}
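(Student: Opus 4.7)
I would proceed in three steps: derive a Goursat boundary value problem that the kernel $K_a$ must satisfy; solve this problem by successive approximations in characteristic coordinates to obtain a function in $W^{1,\infty}(\Omega)$; and identify the constructed solution with the $K_a$ of the statement via uniqueness. The two Goursat conditions in the theorem would emerge as the Cauchy data on the characteristic lines $t=\pm x$, and the $W^{1,\infty}$ regularity of $a$ would propagate to the same class for $K_a$.

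\textbf{Step 1 (derivation of the Goursat problem).} Starting from the distributional identity $(a^2 e_a')'+\rho^2 a^2 e_a=0$, I would plug in the integral representation and differentiate the Volterra integral via Leibniz's rule, collecting boundary terms at $t=\pm x$. To dispose of the factor $\rho^{2}$ multiplying the remaining integral, I would integrate by parts twice in $t$ using $-\rho^{2}e^{i\rho t}=\partial_{tt}e^{i\rho t}$. The identity would then collapse to
\[
\alpha(x)e^{i\rho x}+\beta(x)e^{-i\rho x}+\int_{-x}^{x}\gamma(x,t)e^{i\rho t}\,dt=0\qquad\text{for every }\rho\in\mathbb{C},
\]
with $\gamma(x,t)=-\partial_x\!\bigl[a^2(x)\partial_xK_a(x,t)\bigr]+a^2(x)\partial_{tt}K_a(x,t)$ and $\alpha,\beta$ certain first-order linear ODE expressions in $K_a(x,\pm x)$. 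Expanding both sides as absolutely convergent power series in $\rho$ (using Theorem \ref{Th:SPPS}) and matching coefficients forces $\alpha=\beta=\gamma=0$. The vanishing of $\gamma$ is the hyperbolic PDE
\[
\partial_x\!\bigl[a^2(x)\partial_xK_a(x,t)\bigr]=a^2(x)\partial_{tt}K_a(x,t)\quad\text{a.e.\ in }\Omega.
\]
The ODE $\alpha=0$ reads $2a^{2}(x)\tfrac{d}{dx}K_a(x,x)=(a^{2}(x))'\bigl(1-K_a(x,x)\bigr)$, and with the normalization $K_a(0,0)=0$ (deduced by evaluating the representation at $x=0$ together with $\mathbf{D}_ae_a(\rho,0)=i\rho$) integrates to $K_a(x,x)=1-1/a(x)$. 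The ODE $\beta=0$ is homogeneous linear in $K_a(x,-x)$ with zero initial value, hence $K_a(x,-x)\equiv 0$.

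\textbf{Step 2 (existence and regularity).} In characteristic coordinates $\xi=x+t$, $\eta=x-t$ the PDE becomes
\[
\partial_{\xi}\partial_{\eta}K_a+\frac{a'}{2a}\bigl(\partial_{\xi}+\partial_{\eta}\bigr)K_a=0,
\]
with data $K_a|_{\xi=0}=0$ and $K_a|_{\eta=0}=1-1/a(\xi/2)$. Integrating once in $\xi$ (using the vanishing data on $\xi=0$) and then in $\eta$ converts the problem into a Volterra-type integral system for the triple $(K_a,\partial_\xi K_a,\partial_\eta K_a)$, whose integral operator has its operator norm controlled by $\|a'/a\|_{L^{\infty}}$. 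Picard iteration then produces iterates dominated by a majorant of the form $\sum_n (M\xi\eta)^{n}/n!$; the limit is bounded and Lipschitz in each variable separately, so after undoing the change of variables the resulting $K_a$ lies in $W^{1,\infty}(\Omega)$.

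\textbf{Identification and main obstacle.} Reversing the calculation of Step 1 shows that the $K_a$ produced in Step 2 makes the right-hand side of the integral representation into a solution of \eqref{eq:impedanceeq} satisfying the initial conditions \eqref{eq:initialconditionssole}; by uniqueness of the Cauchy problem it must coincide with $e_a(\rho,\cdot)$, and by injectivity of the compactly supported Fourier transform in $t$ the constructed kernel coincides with the one in the statement. The main technical obstacle is the low regularity: since $a\in W^{1,\infty}$ has no weak second derivative, the whole argument must avoid any appearance of $a''$. This is why it is essential to keep the impedance operator in its factored form $(a^{2}\partial_x)\partial_x$ rather than expand it, and to carry out the Picard estimates in Sobolev rather than classical norms, so that the regularity obtained is precisely $W^{1,\infty}$ and no more.
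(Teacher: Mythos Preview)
The paper does not give a self-contained proof of this theorem; its entire proof is the one-line citation ``See \cite{mineimpedance1}, Subsection 2.6, Remark 8.'' Your proposal supplies precisely what that reference carries out: derive the Goursat problem for $K_a$ from the integral representation, solve it by Picard iteration in characteristic coordinates with estimates controlled by $\|a'/a\|_{L^\infty}$, and identify the constructed kernel with the Paley--Wiener kernel via uniqueness of the Cauchy problem for \eqref{eq:impedanceeq}. So your route is the expected one and the sketch is correct, including the key observation that keeping the operator in factored form $\partial_x(a^2\partial_x)$ and iterating on the integral system for $(K_a,\partial_\xi K_a,\partial_\eta K_a)$ avoids any appearance of $a''$ and yields exactly $W^{1,\infty}$.

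One point worth stating explicitly: as written, Step 1 presupposes the regularity you are trying to establish (you differentiate the Volterra integral and integrate by parts in $t$ before knowing $K_a\in W^{1,\infty}$). You clearly intend Step 1 to be heuristic, with the rigorous argument beginning at Step 2 and the ``reversal'' in the identification paragraph being the actual derivation; saying so at the outset would remove the appearance of circularity.
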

\begin{proof}
	See \cite{mineimpedance1}, Subsection 2.6, Remark 8.
\end{proof}

Set
\begin{equation}\label{eq:transmutationoperator}
	\mathbf{T}_au(x):= u(x)-\int_{-x}^{x}K_a(x,t)u'(t)dt, \quad u\in W^{1,p}(-\ell,\ell), \; -\ell\leq x\leq \ell.
\end{equation}
The operator is well defined for $u\in W^{1,p}(-\ell,\ell)$, $1\leq p\leq \infty$.

Hence, integral representation \eqref{eq:integralsole} can be expressed as
\begin{equation}\label{eq:transmutationexponential}
	e_a(\rho,x)=\mathbf{T}_a[e^{i\rho x}].
\end{equation}
Consequently, 	\begin{equation}\label{eq:transmutationcosinesine}
	C_a(\rho,x)=\mathbf{T}_a[\cos(\rho x)],\quad S_a(\rho,x)=\mathbf{T}_a\left[\frac{\sin(\rho x)}{\rho}\right].
\end{equation}
We establish the first analytical properties of the operator $\mathbf{T}_a$.
\begin{proposition}\label{Prop:boundedoperatorT}
	For $1\leq p\leq \infty$, the operator $\mathbf{T}_a$ belongs to $ \mathcal{B}(W^{1,p}(-\ell,\ell))$, and its derivative is given by
	\begin{equation}\label{eq:derivativetransmutation}
		D\mathbf{T}_au(x)=\frac{u'(x)}{a(x)}-\int_{-x}^{x}\partial_xK_a(x,t)u'(t)dt.
	\end{equation}
	In the case when $u\in W^{2,p}(-\ell,\ell)$, if $v=\mathbf{T}_au$, then $v(0)=u(0)$ and $v'(0)=u'(0)$, that is,  $\mathbf{T}_a$ preserves the initial conditions at $x=0$.
\end{proposition}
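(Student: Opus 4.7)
The plan is to establish all three assertions by direct computation from the definition \eqref{eq:transmutationoperator}, combining the regularity $K_a \in W^{1,\infty}(\Omega)$ from Theorem \ref{Th:integralrepresentation} with the Goursat conditions \eqref{eq:Goursatcondition}.

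First I would dispatch the boundedness. Write $\mathbf{T}_a u = u - F$ where $F(x):=\int_{-x}^x K_a(x,t)u'(t)\,dt$. Since $K_a\in L^{\infty}(\Omega)$, H\"older's inequality gives
\[
|F(x)| \leq \|K_a\|_{L^{\infty}(\Omega)}(2\ell)^{1/p'}\|u'\|_{L^p(-\ell,\ell)},
\]
so $F\in L^{\infty}(-\ell,\ell)\hookrightarrow L^p(-\ell,\ell)$ with norm controlled by $\|u\|_{W^{1,p}}$. Combined with $\|u\|_{L^p}\leq \|u\|_{W^{1,p}}$, this yields $\|\mathbf{T}_a u\|_{L^p}\leq C\|u\|_{W^{1,p}}$. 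Once \eqref{eq:derivativetransmutation} is established, the analogous estimate, using $\|1/a\|_{L^{\infty}}$ and $\|\partial_x K_a\|_{L^{\infty}}$, furnishes the bound on $\|D\mathbf{T}_a u\|_{L^p}$ and completes the proof that $\mathbf{T}_a\in \mathcal{B}(W^{1,p}(-\ell,\ell))$.

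The central step is the derivative formula. I would apply the Leibniz rule for integrals with variable limits to $F$, obtaining
\[
F'(x)=K_a(x,x)u'(x)+K_a(x,-x)u'(-x)+\int_{-x}^{x}\partial_xK_a(x,t)u'(t)\,dt.
\]
Substituting the Goursat identities $K_a(x,x)=1-1/a(x)$ and $K_a(x,-x)=0$ gives $F'(x)=(1-1/a(x))u'(x)+\int_{-x}^{x}\partial_xK_a(x,t)u'(t)\,dt$, and \eqref{eq:derivativetransmutation} follows from $D\mathbf{T}_a u = u'-F'$. The main obstacle is the rigorous justification of this Leibniz rule, since $K_a$ only has essentially bounded (rather than continuous) partial derivatives. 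I plan to handle this either by mollifying $K_a$ with a sequence of smooth kernels whose $W^{1,\infty}$ norms remain uniformly bounded and passing to the limit, or else by verifying the identity in the distributional sense: testing against $\phi\in C_0^{\infty}(-\ell,\ell)$, splitting the domain of integration into the triangles $\{(x,t):0\leq t\leq x\}$ and $\{(x,t):x\leq t\leq 0\}$, and applying Fubini together with the absolute continuity of $K_a$ along the axial slices.

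Finally, the initial conditions reduce to evaluation at $x=0$. The integral over the degenerate interval $[0,0]$ in \eqref{eq:transmutationoperator} vanishes, giving $v(0)=u(0)$; from \eqref{eq:derivativetransmutation}, $v'(0)=u'(0)/a(0)-0=u'(0)$ by the normalization $a(0)=1$. The pointwise meaning of $u'(0)$ and $v'(0)$ is justified by the one-dimensional Sobolev embedding $W^{2,p}(-\ell,\ell)\hookrightarrow C^1([-\ell,\ell])$.
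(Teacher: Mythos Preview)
Your proposal is correct and follows essentially the same route as the paper: bound the integral term as a Volterra operator with $L^\infty$ kernel, differentiate via the Leibniz rule and simplify using the Goursat conditions \eqref{eq:Goursatcondition}, then bound the derivative term analogously and read off the initial conditions from $a(0)=1$. Your added care about justifying the Leibniz rule for $K_a\in W^{1,\infty}(\Omega)$ (by mollification or distributional verification) is a point the paper simply takes for granted, so that discussion is a welcome refinement rather than a deviation.
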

\begin{proof}
	Consider the operator $\mathbf{K}_1u(x):=\int_{-x}^{x}K_a(x,t)u(t)dt$, $u\in W^{1,p}(-\ell,\ell)$. Since $\mathbf{K}_1$ is a Volterra integral operator with kernel $K\in L^{\infty}(\Omega)$, hence $\mathbf{K}_1\in \mathcal{B}(L^p(-\ell,\ell))$. For any $u\in W^{1,p}(-\ell,\ell)$, we have
	\[
	\|\mathbf{T}_au\|_{L^p(-\ell,\ell)}=\|u-\mathbf{K}_1u'\|_{L^p(-\ell,\ell)}\leq \|u\|_{L^p(-\ell,\ell)}+\|\mathbf{K}_1\|_{\mathcal{B}(L^p(-\ell,\ell))}\|u'\|_{L^p(-\ell,\ell)}.
	\]
	For the derivative, using the fact that $K_a\in W^{1,\infty}(\Omega)$ and applying the Leibniz rule, we obtain
	\begin{align*}
		D\mathbf{T}_au(x)& = u'(x)-K_a(x,x)u'(x)-K_a(x,-x)u'(-x)-\int_{-x}^{x}\partial_xK_a(x,t)u'(t)dt\\
		&= u'(x)-\left(1-\frac{1}{a(x)}\right)u'(x)-\int_{-x}^{x}\partial_xK_a(x,t)u'(t)dt\\
		&= \frac{u'(x)}{a(x)}-\int_{-x}^{x}\partial_xK_a(x,t)u'(t)dt.
	\end{align*}
	The second equality is due to the Goursat conditions \eqref{eq:Goursatcondition}. This establishes \eqref{eq:derivativetransmutation}. Now, consider the Volterra operator $\mathbf{K}_2v(x)=\int_{-x}^{x}\partial_xK_a(x,t)v(t)dt$. Since $\partial_xK_a\in L^{\infty}(\Omega)$, it follows that $\mathbf{K}_2\in \mathcal{B}(L^p(-\ell,\ell))$. Thus, 
	\begin{align*}
		\left\|D\mathbf{T}_au\right\|_{L^p(-\ell,\ell)} & = \left\| \frac{u'}{a}-\mathbf{K}_2u'\right\|_{L^p(-\ell,\ell)}\\
		&\leq \left\|a^{-1}\right\|_{L^{\infty}(-\ell,\ell)}\|u\|_{L^p(-\ell,\ell)}+\|\mathbf{K}_2\|_{\mathcal{B}(L^p(-\ell,\ell))}\|u'\|_{L^p(-\ell,\ell)}.
	\end{align*}
	Consequently, for $1\leq p <\infty$,
	\[
	\|\mathbf{T}_au\|_{W^{1,p}(-\ell,\ell)}^p \leq \left[\left(1+\|\mathbf{K}_1\|_{\mathcal{B}(L^p(-\ell,\ell))}\right)^p+\left(\left\|a^{-1}\right\|_{L^{\infty}(-\ell,\ell)}+\|\mathbf{K}_2\|_{\mathcal{B}(L^p(-\ell,\ell))}\right)^p \right] \|u\|_{W^{1,p}(-\ell,\ell)},
	\]
	which implies that $\mathbf{T}_a\in \mathcal{B}(W^{1,p}(-\ell,\ell))$. Similarly for $p=\infty$.
	
	Finally, if $v=\mathbf{T}_au$, $u\in W^{2,p}(-\ell,\ell)$, hence $ v(0)=u(0)$, and by \eqref{eq:derivativetransmutation}, $v'(0)=u'(0)$.
\end{proof}

The following result generalizes the corresponding statement established for $a\in C^1[-\ell,\ell]$ in \cite{mineimpedance1}.

\begin{proposition}\label{Prop:mappingproperty}
	The following relations hold:
	\begin{equation}\label{eq:mappingproperty}
		\mathbf{T}_a[x^k]=\varphi_{a}^{(k)},\quad \forall k\in \mathbb{N}_0.
	\end{equation}
\end{proposition}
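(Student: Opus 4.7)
The plan is to exploit the transmutation identity $e_a(\rho,x)=\mathbf{T}_a[e^{i\rho x}]$ from \eqref{eq:transmutationexponential} together with the SPPS expansion of $e_a(\rho,x)$ from Theorem \ref{Th:SPPS}, and read off the identity \eqref{eq:mappingproperty} by matching coefficients in $\rho$. The first task is to justify applying $\mathbf{T}_a$ termwise to the Taylor series of the exponential; the second is to invoke uniqueness of Taylor coefficients.

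First, fix $\rho\in\mathbb{C}$ and set $S_N(x):=\sum_{k=0}^N\frac{(i\rho)^k}{k!}x^k$. As polynomials in $x$, the partial sums $S_N$ converge to $e^{i\rho x}$ uniformly on $[-\ell,\ell]$ together with their derivatives, hence $S_N\to e^{i\rho x}$ in $W^{1,\infty}(-\ell,\ell)$, and consequently in $W^{1,p}(-\ell,\ell)$ for every $p$. By Proposition \ref{Prop:boundedoperatorT}, $\mathbf{T}_a\in\mathcal{B}(W^{1,p}(-\ell,\ell))$, so I may pass to the limit:
\begin{equation*}
\sum_{k=0}^{\infty}\frac{(i\rho)^k}{k!}\,\mathbf{T}_a[x^k]
\;=\;\mathbf{T}_a[e^{i\rho x}]
\;=\;e_a(\rho,x),
\end{equation*}
the series converging in $W^{1,p}(-\ell,\ell)$, in particular pointwise on $[-\ell,\ell]$.

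Second, Theorem \ref{Th:SPPS} provides the SPPS representation
\begin{equation*}
e_a(\rho,x)\;=\;\sum_{k=0}^{\infty}\frac{(i\rho)^k}{k!}\,\varphi_a^{(k)}(x),
\end{equation*}
with the series converging uniformly and absolutely in $\rho$ on compact subsets of $\mathbb{C}$. Fixing $x\in[-\ell,\ell]$, both expansions represent the same entire function of $\rho$ as a convergent power series centered at $\rho=0$. By the uniqueness of Taylor coefficients of an entire function, $\mathbf{T}_a[x^k](x)=\varphi_a^{(k)}(x)$ for every $k\in\mathbb{N}_0$ and every $x\in[-\ell,\ell]$, which is exactly \eqref{eq:mappingproperty}.

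The only subtlety worth dwelling on is the interchange of $\mathbf{T}_a$ with the infinite sum defining $e^{i\rho x}$; this is precisely where the continuity of $\mathbf{T}_a$ on $W^{1,p}(-\ell,\ell)$, established in Proposition \ref{Prop:boundedoperatorT}, is essential. All remaining steps reduce to comparing two convergent power series expansions of the same analytic function.
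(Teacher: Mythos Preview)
Your proof is correct and follows essentially the same approach as the paper: apply $\mathbf{T}_a$ termwise to the Taylor series of $e^{i\rho x}$ using its continuity on $W^{1,p}(-\ell,\ell)$ (Proposition~\ref{Prop:boundedoperatorT}), compare with the SPPS expansion of $e_a(\rho,x)$ from Theorem~\ref{Th:SPPS}, and equate Taylor coefficients in $\rho$. Your write-up is simply more explicit about the justification of the termwise limit and the uniqueness-of-coefficients step.
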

\begin{proof}
	Since the Taylor series of $e^{i\rho x}$ converges in $W^{1,\infty}(-\ell,\ell)$, by the integral representation \eqref{eq:transmutationexponential} and the continuity of the operator $\mathbf{T}_a$, we have
	\[
	e(\rho,x)=\mathbf{T}_a\left[\sum_{k=0}^{\infty}\frac{(i\rho)^kx^k}{k!}\right]=\sum_{k=0}^{\infty}\frac{(i\rho)^k\mathbf{T}_a[x^k]}{k!}.
	\]
	Comparing this series with the SPPS series \eqref{eq:SPPSseries}, as Taylor series in $\rho$, we obtain \eqref{eq:mappingproperty}.
\end{proof}

\begin{theorem}\label{Th:transmutationop}
	For $1\leq p <\infty$, the operator $\mathbf{T}_a$ satisfies the following transmutation property:
	\begin{equation}\label{eq:transmutationproperty}
		\mathbf{L}_a\mathbf{T}_au=\mathbf{T}_aD^2u,\quad \forall u\in W^{3,p}(-\ell,\ell).
	\end{equation}
\end{theorem}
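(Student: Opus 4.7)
My plan is to verify the identity first on the classical monomials $x^k$---where it follows directly from Proposition~\ref{Prop:mappingproperty} and the $\mathbf{L}_a$-basis relations \eqref{eq:Lbasis}---and then extend to $W^{3,p}(-\ell,\ell)$ by density using the polynomial approximation provided by Corollary~\ref{coro:polynomialaprox}.

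For each $k\in\mathbb{N}_0$ I would combine $\mathbf{T}_a[x^k]=\varphi_a^{(k)}$ with $\mathbf{L}_a\varphi_a^{(k)}=k(k-1)\varphi_a^{(k-2)}$ to obtain
\begin{equation*}
	\mathbf{L}_a\mathbf{T}_a[x^k]=k(k-1)\varphi_a^{(k-2)}=\mathbf{T}_a[k(k-1)x^{k-2}]=\mathbf{T}_aD^2[x^k]
\end{equation*}
(with both sides vanishing for $k=0,1$), so by linearity the identity holds for every polynomial $p\in \Pi(\overline{I})$.

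To pass to $W^{3,p}$, both sides must be continuous as operators $W^{3,p}\to L^p$. The right-hand side is immediate: $D^2\colon W^{3,p}\to W^{1,p}$ is bounded and $\mathbf{T}_a\in \mathcal{B}(W^{1,p}(-\ell,\ell))$ by Proposition~\ref{Prop:boundedoperatorT}. For the left-hand side, under the standing hypothesis $a\in W^{1,\infty}(-\ell,\ell)$ Proposition~\ref{Prop:Regularityofsol} identifies $\mathscr{D}_p(\mathbf{L}_a)$ with $W^{2,p}(-\ell,\ell)$, and $\mathbf{L}_a$ is continuous from $W^{2,p}$ to $L^p$. It therefore suffices to estimate $\mathbf{T}_a p$ in the $W^{2,p}$-norm by the $W^{3,p}$-norm of $p$.

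This last estimate is the main obstacle, since differentiating the formula for $D\mathbf{T}_a$ in Proposition~\ref{Prop:boundedoperatorT} would require $\partial_x^2K_a$, which the hypothesis $a\in W^{1,\infty}$ does not supply. The idea is to bootstrap from the identity already proved on polynomials: expanding $\mathbf{L}_au=u''+\frac{2a'}{a}u'$ and applying it to the polynomial $p$ yields
\begin{equation*}
	(\mathbf{T}_a p)''=\mathbf{T}_aD^2p-\frac{2a'}{a}(\mathbf{T}_a p)',
\end{equation*}
from which $\|\mathbf{T}_a p\|_{W^{2,p}(-\ell,\ell)}\leq C\|p\|_{W^{3,p}(-\ell,\ell)}$ follows because $a,\,a^{-1},\,a'\in L^\infty(-\ell,\ell)$ and $\mathbf{T}_a$ is bounded on $W^{1,p}$. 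Choosing $p_n\to u$ in $W^{3,p}$ via Corollary~\ref{coro:polynomialaprox} then makes $\{\mathbf{T}_ap_n\}$ Cauchy in $W^{2,p}$ with limit $\mathbf{T}_au\in W^{2,p}=\mathscr{D}_p(\mathbf{L}_a)$, and passing to the limit in $\mathbf{L}_a\mathbf{T}_a p_n=\mathbf{T}_aD^2 p_n$ yields \eqref{eq:transmutationproperty}.
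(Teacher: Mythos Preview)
Your argument is correct. The overall architecture---verify \eqref{eq:transmutationproperty} on monomials via \eqref{eq:mappingproperty} and \eqref{eq:Lbasis}, then extend by polynomial density from Corollary~\ref{coro:polynomialaprox}---coincides with the paper's. The difference lies in how you pass to the limit. The paper avoids estimating $\mathbf{T}_ap$ in $W^{2,p}$ altogether: it rewrites $\mathbf{L}_a v=\mathbf{T}_aD^2u$ (with $v=\mathbf{T}_au$) in the equivalent integral form
\[
v=\mathbf{R}_a\mathbf{T}_aD^2u+v(0)\varphi_a^{(0)}+v'(0)\varphi_a^{(1)},
\]
using Remark~\ref{Remark:nonhomogenouseq}, and passes to the limit there, which only requires the already-known continuity of $\mathbf{T}_a$ on $W^{1,p}$ and of $\mathbf{R}_a$ on $L^p$. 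Your route instead bootstraps a $W^{2,p}$ bound for $\mathbf{T}_ap$ from the polynomial identity itself, via $(\mathbf{T}_ap)''=\mathbf{T}_aD^2p-\tfrac{2a'}{a}(\mathbf{T}_ap)'$. This is a legitimate alternative; it uses the explicit form of $\mathbf{L}_a$ under $a\in W^{1,\infty}$ and yields the extra information that $\mathbf{T}_a$ maps $W^{3,p}$ boundedly into $W^{2,p}$, at the cost of invoking the identification $\mathscr{D}_p(\mathbf{L}_a)=W^{2,p}$. The paper's integral-form argument is slightly more self-contained and would transfer more readily to weaker regularity on $a$, but both are sound.
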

\begin{proof}
	We begin by proving the relation \eqref{eq:transmutationoperator} for $p\in \Pi[-\ell,\ell]$. Let $p(x)=\sum_{k=0}^{N}a_kx^k$. By relations \eqref{eq:Lbasis} and the mapping property \eqref{eq:mappingproperty}, we have
	\begin{align*}
		\mathbf{L}_a\mathbf{T}_ap(x) &= \mathbf{L}_a\left(\sum_{k=0}^{N}a_k\mathbf{T}_a[x^k]\right)= \sum_{k=0}^{N}\alpha_k\mathbf{L}_a\varphi_{a}^{(k)}(x)\\
		&= \sum_{k=2}^{N}k(k-1)\alpha_k\varphi_{a}^{(k-2)}= \sum_{k=2}^{N}k(k-1)\alpha_k\mathbf{T}_a[x^{k-2}]= \mathbf{T}_aD^2p(x).
	\end{align*}
	For a general $u\in W^{3,p}(-\ell,\ell)$,  let $v= \mathbf{T}_au$, and note that  \eqref{eq:transmutationproperty} is equivalent to
	\[
	\mathbf{L}_av= \mathbf{T}_aD^2u.
	\]
	By Remark \eqref{Remark:nonhomogenouseq} and since $a(0)=1$, this is equivalent to
	\begin{equation}\label{eq:auxiliar1}
		v= \mathbf{R}_a\mathbf{T}_aD^2u+v(0)\varphi_{a}^{(0)}+v'(0)\varphi_{a}^{(1)}.
	\end{equation}
	We prove relation \eqref{eq:auxiliar1}. Let $\{p_n\}_{n=0}^{\infty}\subset \Pi[-\ell,\ell]$ such that $p_n^{(j)}\rightarrow u^{(j)}$ uniformly on $[-\ell,\ell]$ for $j=0,1,2,$ and $p_n'''\rightarrow u'''$ in $L^p(-\ell,\ell)$.  Since \eqref{eq:transmutationproperty} is valid for each $p_n$, hence
	\[
	\mathbf{T}_ap_n= \mathbf{R}_a\mathbf{T}_aD^2p_n+p_n(0)\varphi_{a}^{(0)}+p_n'(0)\varphi_{a}^{(1)}
	\]
	The continuity of $\mathbf{T}_a$ in $W^{1,p}(-\ell,\ell)$ and $\mathbf{R}_a$ in $L^p(-\ell,\ell)$ yields
	\begin{align*}
		v &=  \lim_{n\rightarrow \infty} \mathbf{T}_ap_n \\
		&= \lim_{n\rightarrow \infty}\left(\mathbf{R}_a\mathbf{T}_aD^2p_n+p_n(0)\varphi_{a}^{(0)}+p_n'(0)\varphi_{a}^{(1)}\right) \\
		&=\mathbf{R}_a\mathbf{T}_aD^2u+u(0)\varphi_{a}^{(0)}+u'(0)\varphi_{a}^{(1)}\\
		&= \mathbf{R}_a\mathbf{T}_aD^2u+v(0)\varphi_{a}^{(0)}+v'(0)\varphi_{a}^{(1)},
	\end{align*}
	because $v(0)=u(0)$ and $v'(0)=u'(0)$. Therefore, the transmutation property\eqref{eq:transmutationproperty} is valid for all $u\in W^{3,p}(-\ell,\ell)$, as desired.
\end{proof}

\subsection{Invertibility of the transmutation operator}

Let $y\in W^{1,p}(-\ell,\ell)$. Suppose that $u\in W^{1,p}(-\ell,\ell)$ satisfies the equation $y=\mathbf{T}_au$. Integration by parts yields
\begin{align*}
	y(x) & = u(x)-K_a(x,t)u'(t)\big{|}_{-x}^{x}+\int_{-x}^{x}\partial_tK_a(x,t)u(t)dt \\
	&= u(x)-K_a(x,x)u(x)+K_a(x,-x)u(-x)+\int_{-x}^{x}\partial_t K_a(x,t)u(t)dt\\
	&= \frac{u(x)}{a(x)}+\int_{-x}^{x}\partial_tK_a(x,t)u(t)dt,
\end{align*}
where the last equality follows from the Goursat conditions \eqref{eq:Goursatcondition}. This leads to the Volterra integral equation
\[
a(x)y(x)= u(x)+\int_{-x}^{x}a(x)\partial_tK_a(x,t)u(t)dt
\]
where the kernel $a\partial_tK_a\in L^{\infty}(\Omega)$. Since $ay\in L^{\infty}(-\ell,\ell)$, this Volterra equation of the second kind admits a unique solution $u\in L^{\infty}(-\ell,\ell)$ given by
\begin{equation}\label{eq:invertibilityrelation}
	u(x)=a(x)y(x)+\int_{-x}^{x}L(x,t)[a(t)y(t)]dt,
\end{equation}
where $L\in L^{\infty}(R)$ (see, e.g., \cite[Ch. 10, Sub. 2.5]{kolmogorov} or \cite[Ch. IV]{vladimirov}). In particular, we obtain that $\mathbf{T}_au=0$ implies $u=0$, i.e., the operator $\mathbf{T}_a$ is injective.
We denote by $\mathbf{V}_a$ the operator on the right-hand side of \eqref{eq:invertibilityrelation}. It is the composition of a Volterra integral operator with kernel $L \in L^{\infty}(R)$ and the multiplication operator by $a$. Hence, $\mathbf{V}_a \in \mathcal{B}(L^p(-\ell,\ell))$. Since $\mathbf{T}_a[x^k] = \varphi_a^{(k)}(x)$ for all $k \in \mathbb{N}_0$, we obtain
\[
x^k=\mathbf{V}_a\varphi_a^{(k)}(x),\quad \forall k\in \mathbb{N}_0.
\]

Consequently, if $p(x) = \sum_{k=0}^{N} \alpha_k x^k$, then
\[
p(x) = \sum_{k=0}^{N} \alpha_k \mathbf{V}_a \varphi_a^{(k)}(x).
\]
Therefore, the operators $\mathbf{T}_a: \Pi[-\ell,\ell] \to \Pi_a[-\ell,\ell]$ and $\mathbf{V}_a: \Pi_a[-\ell,\ell] \to \Pi[-\ell,\ell]$ are linear isomorphisms, and they satisfy
\begin{equation}\label{eq:inverseoperator}
	\mathbf{T}_a \mathbf{V}_a = \mathbf{I}_{\Pi_a[-\ell,\ell]} \quad \text{and} \quad \mathbf{V}_a \mathbf{T}_a = \mathbf{I}_{\Pi[-\ell,\ell]}.
\end{equation}

\begin{lemma}
	There exists $C>0$ such that
	\begin{equation}\label{eq:invertibilityinequality}
		\|\mathbf{T}_ap\|_{W^{1,p}(-\ell,\ell)}\geq C\|p\|_{W^{1,p}(-\ell,\ell)}\qquad \forall p\in \Pi[-\ell,\ell].
	\end{equation}
\end{lemma}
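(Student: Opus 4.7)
The plan is to promote the $L^p$-invertibility relation \eqref{eq:invertibilityrelation} to the $W^{1,p}$-level by establishing an analogous inversion for derivatives. Let $q\in\Pi[-\ell,\ell]$ (renaming the polynomial to avoid the conflict with the Sobolev exponent $p$) and set $y=\mathbf{T}_a q\in\Pi_a[-\ell,\ell]$. From \eqref{eq:invertibilityrelation} we already have $q=\mathbf{V}_a y$ with $\mathbf{V}_a\in\mathcal{B}(L^p(-\ell,\ell))$, so
\[
\|q\|_{L^p(-\ell,\ell)}\leq M_1\,\|y\|_{L^p(-\ell,\ell)},\qquad M_1:=\|\mathbf{V}_a\|_{\mathcal{B}(L^p(-\ell,\ell))}.
\]
What remains is a matching inequality $\|q'\|_{L^p}\leq M_2\|y'\|_{L^p}$; combining the two will give \eqref{eq:invertibilityinequality}.

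To produce the derivative bound I would start from formula \eqref{eq:derivativetransmutation}, which applied to $u=q$ yields
\[
y'(x)=\frac{q'(x)}{a(x)}-\int_{-x}^{x}\partial_x K_a(x,t)\,q'(t)\,dt.
\]
Multiplying by $a(x)$ and rearranging, $q'$ satisfies the Volterra integral equation of the second kind
\[
q'(x)=a(x)y'(x)+\int_{-x}^{x}a(x)\partial_x K_a(x,t)\,q'(t)\,dt.
\]
By Theorem \ref{Th:integralrepresentation} and the hypothesis $a\in W^{1,\infty}(-\ell,\ell)$, the kernel $a(x)\partial_x K_a(x,t)$ belongs to $L^{\infty}(\Omega)$. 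Standard Volterra theory (see \cite[Ch.\ IV]{vladimirov} or \cite[Ch.\ 10, Sub.\ 2.5]{kolmogorov}) then produces a resolvent kernel $\widetilde{L}\in L^{\infty}(\Omega)$ for which
\[
q'(x)=a(x)y'(x)+\int_{-x}^{x}\widetilde{L}(x,t)\,a(t)\,y'(t)\,dt.
\]
The right-hand side defines a bounded operator $\widetilde{\mathbf{V}}_a\in\mathcal{B}(L^p(-\ell,\ell))$ sending $y'$ to $q'$, so $\|q'\|_{L^p}\leq M_2\|y'\|_{L^p}$ with $M_2:=\|\widetilde{\mathbf{V}}_a\|_{\mathcal{B}(L^p(-\ell,\ell))}$.

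Setting $M:=\max\{M_1,M_2\}$ and adding the two estimates gives
\[
\|q\|_{L^p(-\ell,\ell)}+\|q'\|_{L^p(-\ell,\ell)}\leq M\bigl(\|y\|_{L^p(-\ell,\ell)}+\|y'\|_{L^p(-\ell,\ell)}\bigr),
\]
from which \eqref{eq:invertibilityinequality} follows with a constant $C>0$ depending only on $M$, because the $W^{1,p}$-norm is equivalent to $\|\cdot\|_{L^p}+\|D\cdot\|_{L^p}$. I do not foresee a genuine obstacle: the derivative formula \eqref{eq:derivativetransmutation} already absorbs the Goursat conditions through Proposition \ref{Prop:boundedoperatorT}, and the $L^{\infty}$-regularity of $\partial_x K_a$ supplied by Theorem \ref{Th:integralrepresentation} is precisely what is needed to run the standard Volterra inversion a second time, mirroring the argument that produced $\mathbf{V}_a$ from the original integral representation.
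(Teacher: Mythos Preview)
Your argument is correct, and it takes a genuinely different route from the paper. Both proofs begin identically by using $\mathbf{V}_a\in\mathcal{B}(L^p(-\ell,\ell))$ to control $\|q\|_{L^p}$ by $\|\mathbf{T}_aq\|_{L^p}$. For the derivative, however, the paper does not set up a second Volterra inversion from scratch: instead it invokes the Darboux structure, observing via \eqref{eq:derivativeformalpower} that $Dx^k=k\mathbf{V}_{\frac{1}{a}}\varphi_{\frac{1}{a}}^{(k-1)}=\mathbf{V}_{\frac{1}{a}}\mathbf{D}_a\varphi_a^{(k)}$, hence by linearity $Dp=\mathbf{V}_{\frac{1}{a}}\mathbf{D}_a\mathbf{T}_ap$, and then bounds $\|Dp\|_{L^p}$ by $\|\mathbf{V}_{\frac{1}{a}}\|_{\mathcal{B}(L^p)}\|a^2\|_{L^\infty}\|D\mathbf{T}_ap\|_{L^p}$. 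Your approach is arguably more self-contained: it mirrors exactly the construction of $\mathbf{V}_a$ from the integral representation, using \eqref{eq:derivativetransmutation} and the $L^\infty$-regularity of $\partial_xK_a$ from Theorem~\ref{Th:integralrepresentation}, and needs no reference to the Darboux impedance $\frac{1}{a}$ or its associated operator $\mathbf{V}_{\frac{1}{a}}$. The paper's approach, on the other hand, foreshadows and exploits the intertwining with the Darboux-transformed equation that becomes central in the subsequent sections, and avoids constructing a new resolvent kernel. Both yield the same conclusion with comparable effort.
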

\begin{proof}
	Let $p(x)=\sum_{k=0}^{N}\alpha_kx^k$. Since $\mathbf{V}_a\in \mathcal{B}(L^p(-\ell,\ell))$, we have
	\begin{equation*}
		\|p\|_{L^p(-\ell,\ell)}=\|\mathbf{V}_a\mathbf{T}_ap\|_{L^p(-\ell,\ell)}\leq \|\mathbf{V}_a\|_{\mathcal{B}(L^p(-\ell,\ell))}\|\mathbf{T}_ap\|_{L^p(-\ell,\ell)}.
	\end{equation*}
	To analyze the derivative, consider the operator $\mathbf{V}_{\frac{1}{a}}\in \mathcal{B}(L^p(-\ell,\ell))$. From equation \eqref{eq:derivativeformalpower}, for $k\geq 1$, we obtain
	\begin{align*}
		Dx^k=kx^{k-1}=k\mathbf{V}_{\frac{1}{a}}\varphi_{\frac{1}{a}}^{(k-1)}(x)=\mathbf{V}_{\frac{1}{a}}\mathbf{D}_a\varphi_a^{(k)}.
	\end{align*}
	Since $D1=0=\mathbf{V}_{\frac{1}{a}}\mathbf{D}_a\varphi_a^{(0)}$, it follows  by linearity that
	\[
	Dp=\mathbf{V}_{\frac{1}{a}}\mathbf{D}_a\mathbf{T}_ap.
	\]
	Consequently,
	\[
	\|Dp\|_{L^p(-\ell,\ell)}\leq \|\mathbf{V}_{\frac{1}{a}}\|_{\mathcal{B}(L^p(-\ell,\ell))}\|\mathbf{D}_a\mathbf{T}_ap\|_{L^p(-\ell,\ell)}\leq \|\mathbf{V}_{\frac{1}{a}}\|_{\mathcal{B}(L^p(-\ell,\ell))}\|a^2\|_{L^{\infty}(-\ell,\ell)}\|D\mathbf{T}_ap\|_{L^p(-\ell,\ell)}.
	\]
	Hence
	\[
	\|p\|_{W^{1,p}(-\ell,\ell)} \leq M \|\mathbf{T}_ap\|_{W^{1,p}(-\ell,\ell)},
	\]
	where $M=\max\{\|\mathbf{V}_a\|_{\mathcal{B}(L^p(-\ell,\ell))},\|\mathbf{V}_{\frac{1}{a}}\|_{\mathcal{B}(L^p(-\ell,\ell))}\|a^2\|_{L^{\infty}(-\ell,\ell)}\}$. Therefore, inequality  \eqref{eq:invertibilityinequality} holds with constant $C=\frac{1}{M}$.
\end{proof}

\begin{theorem}
	For $1\leq p<\infty$, $\mathbf{T}_a\in \mathcal{G}(W^{1,p}(-\ell,\ell))$.
\end{theorem}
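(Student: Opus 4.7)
The plan is to assemble three ingredients already established in the excerpt: the continuity of $\mathbf{T}_a$ on $W^{1,p}(-\ell,\ell)$ (Proposition \ref{Prop:boundedoperatorT}), the coercivity estimate $\|\mathbf{T}_a p\|_{W^{1,p}} \geq C\|p\|_{W^{1,p}}$ valid on the subspace $\Pi[-\ell,\ell]$ (the preceding Lemma), and the two density statements: $\Pi[-\ell,\ell]$ is dense in $W^{1,p}(-\ell,\ell)$ (Corollary \ref{coro:polynomialaprox}), and $\Pi_a[-\ell,\ell]$ is dense in $W^{1,p}(-\ell,\ell)$ (Theorem \ref{Th:Completenessformalpowerssobolev}(i), which applies because $a,1/a\in L^{\infty}(-\ell,\ell)$).

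First I would upgrade the coercivity estimate from polynomials to the whole space. Given any $u\in W^{1,p}(-\ell,\ell)$, invoke Corollary \ref{coro:polynomialaprox} to pick a sequence $\{p_n\}\subset \Pi[-\ell,\ell]$ with $p_n\to u$ in $W^{1,p}(-\ell,\ell)$. By the boundedness of $\mathbf{T}_a$, also $\mathbf{T}_ap_n\to \mathbf{T}_au$ in $W^{1,p}(-\ell,\ell)$. Applying \eqref{eq:invertibilityinequality} to each $p_n$ and passing to the limit yields the global lower bound
\[
\|\mathbf{T}_au\|_{W^{1,p}(-\ell,\ell)}\geq C\|u\|_{W^{1,p}(-\ell,\ell)}\qquad \forall u\in W^{1,p}(-\ell,\ell).
\]

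From this lower bound, $\mathbf{T}_a$ is automatically injective and has closed range: if $\mathbf{T}_au_n\to v$ in $W^{1,p}(-\ell,\ell)$, the coercivity forces $\{u_n\}$ to be Cauchy, hence $u_n\to u$ for some $u\in W^{1,p}(-\ell,\ell)$, and continuity gives $\mathbf{T}_au=v$, so $v$ lies in the range. Next I would show the range is dense: by Proposition \ref{Prop:mappingproperty}, the range of $\mathbf{T}_a$ contains every $\varphi_a^{(k)}$, hence contains $\Pi_a[-\ell,\ell]$, which is dense in $W^{1,p}(-\ell,\ell)$ by Theorem \ref{Th:Completenessformalpowerssobolev}(i). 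A closed, dense subspace must be the whole space, so $\mathbf{T}_a$ is surjective.

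Thus $\mathbf{T}_a$ is a continuous bijection of $W^{1,p}(-\ell,\ell)$ onto itself. Boundedness of the inverse follows immediately from the coercivity estimate, which gives $\|\mathbf{T}_a^{-1}v\|_{W^{1,p}}\leq C^{-1}\|v\|_{W^{1,p}}$ (the open mapping theorem would also do the job, but is not needed). Therefore $\mathbf{T}_a\in \mathcal{G}(W^{1,p}(-\ell,\ell))$. There is no real obstacle here once the preceding lemma is in hand; the only point requiring care is that the two density results play distinct roles—polynomial density transports the coercivity estimate to the whole domain, while formal-power density supplies surjectivity—so one must check both hypotheses ($a\in W^{1,\infty}$ suffices for both).
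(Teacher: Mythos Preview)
Your argument is correct and complete. It differs from the paper's in organisation rather than in substance: the paper works directly with the candidate inverse $\mathbf{V}_a$ introduced just before the lemma, uses \eqref{eq:invertibilityinequality} to show $\mathbf{V}_a$ is bounded in the $W^{1,p}$-norm on $\Pi_a[-\ell,\ell]$, extends it by density (Theorem~\ref{Th:Completenessformalpowerssobolev}), and then concludes $\mathbf{T}_a^{-1}=\mathbf{V}_a$ from the identities \eqref{eq:inverseoperator} together with the density of $\Pi[-\ell,\ell]$ and $\Pi_a[-\ell,\ell]$. You instead extend the coercivity estimate itself to the whole space and run the standard ``injective $+$ closed range $+$ dense range $\Rightarrow$ bijective'' argument, never invoking $\mathbf{V}_a$ explicitly. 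Both routes consume exactly the same three ingredients (continuity of $\mathbf{T}_a$, the coercivity lemma, and the two density theorems), and both identify the same constant $C^{-1}$ as a bound for the inverse. The paper's version has the minor bonus of naming the inverse operator concretely as (the extension of) $\mathbf{V}_a$; yours is slightly more self-contained since it does not rely on the preceding Volterra-equation discussion.
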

\begin{proof}
	Let $\widehat{p}\in \Pi_a[-\ell,\ell]$. Setting $p=\mathbf{V}_a\widehat{p}\in \Pi[-\ell,\ell]$, inequality \eqref{eq:invertibilityinequality} implies
	\[
	\|\mathbf{V}_a\widehat{p}\|_{W^{1,p}(-\ell,\ell)}=\|p\|_{W^{1,p}(-\ell,\ell)}\leq \frac{1}{C}\|\mathbf{T}_ap\|_{W^{1,p}(-\ell,\ell)}=\frac{1}{C}\|\widehat{p}\|_{W^{1,p}(-\ell,\ell)}.
	\]
	Hence, the operator $\mathbf{V}$ is bounded with respect to the $W^{1,p}$-norm on the subspace $\Pi_a[-\ell,\ell]$. Since $\Pi_a[-\ell,\ell]$ is dense in $W^{1,p}(-\ell,\ell)$  by Theorem \ref{Th:Completenessformalpowerssobolev}, the operator $\mathbf{V}_a$ admits a unique bounded extension to the whole space $W^{1,p}(-\ell,\ell)$. Abusing notation, we denote this extension by $\mathbf{V}_a$.  By the relations in \eqref{eq:inverseoperator}, Corollary  \ref{coro:polynomialaprox}, and Theorem \ref{Th:Completenessformalpowerssobolev}, we conclude that $\mathbf{T}_a\in \mathcal{G}(W^{1,p}(-\ell,\ell)) $ with inverse $\mathbf{T}_a^{-1}=\mathbf{V}_a$.
\end{proof}

\subsection{Relations between $\mathbf{T}_a$ and the Darboux associated operator $\mathbf{T}_{\frac{1}{a}}$}
Now we establish a relation between the operator $\mathbf{T}_a$ of Eq. \eqref{eq:impedanceeq}
and the operator $\mathbf{T}_{\frac{1}{a}}$ associated with the corresponding Darboux equation.

\begin{theorem}\label{Th:RelationDarboux1}
	The following identity holds:
	\begin{equation}\label{eq:relationTaTdarboux}
		\mathbf{D}_a\mathbf{T}_au=\mathbf{T}_{\frac{1}{a}}Du\qquad \forall u\in W^{2,p}(-\ell,\ell).
	\end{equation}
\end{theorem}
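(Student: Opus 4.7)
The plan is to first verify the identity on polynomials using the mapping property of $\mathbf{T}_a$, and then extend it to all of $W^{2,p}(-\ell,\ell)$ by density and continuity.

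\textbf{Step 1 (polynomial case).} Let $p(x)=\sum_{k=0}^{N}\alpha_k x^k\in \Pi[-\ell,\ell]$. By the mapping property \eqref{eq:mappingproperty} applied to both $\mathbf{T}_a$ and $\mathbf{T}_{\frac{1}{a}}$, together with the differentiation rule \eqref{eq:derivativeformalpower}, I would compute
\[
\mathbf{D}_a\mathbf{T}_ap = \sum_{k=0}^{N}\alpha_k \mathbf{D}_a\varphi_a^{(k)} = \sum_{k=1}^{N}k\alpha_k \varphi_{\frac{1}{a}}^{(k-1)} = \sum_{k=1}^{N}k\alpha_k \mathbf{T}_{\frac{1}{a}}[x^{k-1}] = \mathbf{T}_{\frac{1}{a}}Dp.
\]
So \eqref{eq:relationTaTdarboux} holds on $\Pi[-\ell,\ell]$.

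\textbf{Step 2 (continuity of both sides as maps $W^{2,p}\to L^p$).} Since $a\in W^{1,\infty}(-\ell,\ell)$ is positive with $a(0)=1$, its continuity and positivity on the compact interval $[-\ell,\ell]$ force $a$ to be bounded away from zero, so $\frac{1}{a}\in W^{1,\infty}(-\ell,\ell)$ as well. By Proposition \ref{Prop:boundedoperatorT}, both $\mathbf{T}_a$ and $\mathbf{T}_{\frac{1}{a}}$ belong to $\mathcal{B}(W^{1,p}(-\ell,\ell))$. For the left-hand side, the composition $W^{2,p}\hookrightarrow W^{1,p}\xrightarrow{\mathbf{T}_a}W^{1,p}\xrightarrow{D}L^p\xrightarrow{a^2\cdot}L^p$ is bounded. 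For the right-hand side, $W^{2,p}\xrightarrow{D}W^{1,p}\xrightarrow{\mathbf{T}_{1/a}}W^{1,p}\hookrightarrow L^p$ is bounded. Hence both $u\mapsto \mathbf{D}_a\mathbf{T}_au$ and $u\mapsto \mathbf{T}_{\frac{1}{a}}Du$ are continuous linear maps from $W^{2,p}(-\ell,\ell)$ into $L^p(-\ell,\ell)$.

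\textbf{Step 3 (density argument).} By Corollary \ref{coro:polynomialaprox}, for any $u\in W^{2,p}(-\ell,\ell)$ there is a sequence $\{p_n\}\subset \Pi[-\ell,\ell]$ with $p_n\to u$ in $W^{2,p}(-\ell,\ell)$. Applying Step 1 to each $p_n$ gives $\mathbf{D}_a\mathbf{T}_ap_n=\mathbf{T}_{\frac{1}{a}}Dp_n$, and passing to the limit via the bounds of Step 2 yields \eqref{eq:relationTaTdarboux} for $u$.

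The only subtle point is the observation in Step 2 that $\frac{1}{a}$ inherits the regularity $W^{1,\infty}$ from $a$, which is needed in order to invoke Proposition \ref{Prop:boundedoperatorT} for the operator $\mathbf{T}_{\frac{1}{a}}$. Everything else reduces to the mapping property \eqref{eq:mappingproperty}, the known derivative formula \eqref{eq:derivativeformalpower}, and the density of polynomials in $W^{2,p}$.
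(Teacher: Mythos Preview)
Your proof is correct and follows essentially the same route as the paper: verify \eqref{eq:relationTaTdarboux} on polynomials via \eqref{eq:mappingproperty} and \eqref{eq:derivativeformalpower}, then extend by density of $\Pi[-\ell,\ell]$ in $W^{2,p}(-\ell,\ell)$ using continuity of both sides as maps into $L^p$. Your write-up is in fact a bit more careful than the paper's, since you make explicit that $\frac{1}{a}\in W^{1,\infty}(-\ell,\ell)$ (needed for $\mathbf{T}_{1/a}\in\mathcal{B}(W^{1,p})$) and you correctly state the domain as $W^{2,p}$ rather than $W^{1,p}$ when invoking continuity of $\mathbf{T}_{\frac{1}{a}}D$.
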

\begin{proof}
	By Eq. \eqref{eq:derivativeformalpower} and the fact that $\mathbf{D}_a\mathbf{T}_a1=0=\mathbf{T}_{\frac{1}{a}}D1$, the identity \eqref{eq:relationTaTdarboux} holds for all $p\in \Pi[-\ell,\ell]$. Since both operators $\mathbf{D}_a\mathbf{T}_a,\mathbf{T}_{\frac{1}{a}}D: W^{1,p}(-\ell,\ell)\rightarrow L^p(-\ell,\ell)$ are bounded, and since $\Pi[-\ell,\ell]$ is dense in $W^{2,p}(-\ell,\ell)$, the result follows by continuity.
\end{proof}

\begin{remark}
	Note that $\mathbf{T}_{\frac{1}{a}}D\in \mathcal{B}(W^{2,p}(-\ell,\ell),W^{1,p}(-\ell,\ell))$. Consequently, relation \eqref{eq:relationTaTdarboux} implies that $\mathbf{D}_a\mathbf{T}_{\frac{1}{a}}\in \mathcal{B}(W^{2,p}(-\ell,\ell),W^{1,p}(-\ell,\ell))$.
\end{remark}

\begin{corollary}
	The transmutation operators $\mathbf{T}_a$ and $\mathbf{T}_{\frac{1}{a}}$ satisfy the relation
	\begin{equation}
		\mathbf{T}_au=\mathbf{J}_a\mathbf{T}_{\frac{1}{a}}Du+u(0)\qquad \forall u\in W^{2,p}(-\ell,\ell).
	\end{equation}
\end{corollary}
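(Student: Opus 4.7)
The plan is to apply the operator $\mathbf{J}_a$ to both sides of the identity \eqref{eq:relationTaTdarboux} established in Theorem \ref{Th:RelationDarboux1}. Since $\mathbf{J}_a$ is a right-inverse of $\mathbf{D}_a$ and since $\mathbf{J}_a$ is defined as integration from $x_0 = 0$, for any $v\in \mathscr{D}_p(\mathbf{D}_a)$ we have the identity $\mathbf{J}_a\mathbf{D}_a v = v - v(0)$, which is the analogue of the fundamental theorem of calculus in this setting.

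Concretely, for $u\in W^{2,p}(-\ell,\ell)$, I would apply $\mathbf{J}_a$ to \eqref{eq:relationTaTdarboux}, yielding
\begin{equation*}
\mathbf{J}_a\mathbf{D}_a\mathbf{T}_a u = \mathbf{J}_a\mathbf{T}_{\frac{1}{a}}Du.
\end{equation*}
The left-hand side equals $\mathbf{T}_a u - (\mathbf{T}_au)(0)$. By Proposition \ref{Prop:boundedoperatorT}, the transmutation operator preserves the initial value at $0$, so $(\mathbf{T}_au)(0) = u(0)$. Rearranging produces exactly the claimed identity.

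The only points to verify are that the compositions involved are well-defined on $W^{2,p}(-\ell,\ell)$ (so that the calculation makes sense pointwise rather than just in a distributional sense) and that $\mathbf{T}_au$ indeed lies in the domain $\mathscr{D}_p(\mathbf{D}_a)$ needed to apply the identity $\mathbf{J}_a\mathbf{D}_a v = v - v(0)$. These facts follow from Proposition \ref{Prop:boundedoperatorT}, which guarantees $\mathbf{T}_a u \in W^{1,p}(-\ell,\ell)$ with an explicit formula for its derivative, and from the remark following Theorem \ref{Th:RelationDarboux1}. There is no real obstacle here: the statement is essentially a reformulation of Theorem \ref{Th:RelationDarboux1} obtained by integration, and the only subtle ingredient, namely that $\mathbf{T}_a$ fixes the value at the base point $0$, has already been recorded.
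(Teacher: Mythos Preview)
Your proof is correct and follows essentially the same route as the paper: apply $\mathbf{J}_a$ to both sides of \eqref{eq:relationTaTdarboux}, use the identity $\mathbf{J}_a\mathbf{D}_a v = v - v(0)$, and invoke the fact (from Proposition~\ref{Prop:boundedoperatorT}) that $(\mathbf{T}_a u)(0)=u(0)$. The paper's proof is a one-line reference to exactly these ingredients; your version simply spells out the well-definedness checks in more detail.
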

\begin{proof}
	The result follows from Eq. \eqref{eq:relationTaTdarboux} and the inversion formula for $\mathbf{D}_{\frac{1}{a}}$.
\end{proof}
\newline

From Theorem \eqref{Th:RelationDarboux1}, we obtain a second proof of the transmutation property \eqref{eq:transmutationproperty}.
\newline

\begin{proof}[2nd. Proof of Th. \ref{Th:transmutationop}]
	Let $u\in W^{3,p}(-\ell,\ell)$. Then both $\mathbf{D}_a\mathbf{T}_au$ and $\mathbf{T}_{\frac{1}{a}}Du$ belong to $W^{1,p}(-\ell,\ell)$, which allows the application of operator $\mathbf{D}_a$. Thus,
	\begin{align*}
		\mathbf{L}_a\mathbf{T}_a u &= \mathbf{D}_{\frac{1}{a}}\left(\mathbf{D}_a\mathbf{T}_au\right) =\mathbf{D}_{\frac{1}{a}}\left(\mathbf{T}_{\frac{1}{a}}Du\right)=\left(\mathbf{D}_{\frac{1}{a}}\mathbf{T}_{\frac{1}{a}}\right)Du=\mathbf{T}_aD^2u.
	\end{align*}
\end{proof}

\begin{proposition}
	For every $m\in \mathbb{N}$, the following relation holds:
	\begin{equation}\label{eq:transmutationgeneralized}
		\mathbf{D}_a^{(m)}\mathbf{T}_au=\mathbf{T}_{a^{(-1)^m}}D^mu,\quad \forall u \in W^{m+1,p}(-\ell,\ell).
	\end{equation}
\end{proposition}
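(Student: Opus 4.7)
My plan is a straightforward induction on $m$, with Theorem \ref{Th:RelationDarboux1} serving as both the base case and the engine of the inductive step. The base case $m=1$ is exactly Theorem \ref{Th:RelationDarboux1}, valid for $u \in W^{2,p}(-\ell,\ell)$. Note that since $a \in W^{1,\infty}(-\ell,\ell)$ with $a$ and $1/a$ bounded and $(1/a)' = -a'/a^2 \in L^\infty(-\ell,\ell)$, the function $1/a$ is also in $W^{1,\infty}(-\ell,\ell)$; hence Theorem \ref{Th:RelationDarboux1} may be invoked with impedance $a^{(-1)^m}$ for every $m \in \mathbb{N}$.

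For the inductive step, I would assume the identity for some $m \geq 1$ and take $u \in W^{m+2,p}(-\ell,\ell)$. Since in particular $u \in W^{m+1,p}$, the inductive hypothesis yields
\[
\mathbf{D}_a^{(m)} \mathbf{T}_a u = \mathbf{T}_{a^{(-1)^m}} D^m u.
\]
Because $D^m u \in W^{2,p}(-\ell,\ell)$, Theorem \ref{Th:RelationDarboux1} applied with impedance $b = a^{(-1)^m}$ (so that $1/b = a^{(-1)^{m+1}}$) to the function $D^m u$ gives
\[
\mathbf{D}_{a^{(-1)^m}} \mathbf{T}_{a^{(-1)^m}} (D^m u) = \mathbf{T}_{a^{(-1)^{m+1}}} D^{m+1} u.
\]
Applying $\mathbf{D}_{a^{(-1)^m}}$ to both sides of the inductive hypothesis and invoking the recursive definition $\mathbf{D}_a^{(m+1)} = \mathbf{D}_{a^{(-1)^m}} \mathbf{D}_a^{(m)}$ from \eqref{eq:aderivative} then yields
\[
\mathbf{D}_a^{(m+1)} \mathbf{T}_a u = \mathbf{T}_{a^{(-1)^{m+1}}} D^{m+1} u,
\]
as required.

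The main technicality to check is that it is legitimate to apply $\mathbf{D}_{a^{(-1)^m}}$ to the identity produced by the inductive hypothesis; that is, that both sides belong to the domain of this operator. This is guaranteed by the right-hand side: the regularity $D^m u \in W^{2,p}$ together with Proposition \ref{Prop:boundedoperatorT} (more precisely formula \eqref{eq:derivativetransmutation} applied with impedance $a^{(-1)^m}$) shows that $\mathbf{T}_{a^{(-1)^m}} D^m u$ has a derivative of the form $(D^m u)' / a^{(-1)^m} - \mathbf{K}_2[(D^m u)']$ lying in $W^{1,p}$, so that multiplication by $(a^2)^{(-1)^m} \in W^{1,\infty}$ followed by differentiation is well defined.

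If I preferred to avoid these domain verifications, an alternative route is to first establish \eqref{eq:transmutationgeneralized} for polynomials, where it becomes a direct consequence of the mapping property $\mathbf{T}_a[x^k] = \varphi_a^{(k)}$ from Proposition \ref{Prop:mappingproperty} combined with \eqref{eq:generalizedderivativeformalpowers} applied to each monomial, and then extend to $u \in W^{m+1,p}$ by density using Corollary \ref{coro:polynomialaprox} together with the continuity of the composed operators $\mathbf{D}_a^{(m)} \mathbf{T}_a$ and $\mathbf{T}_{a^{(-1)^m}} D^m$ from $W^{m+1,p}(-\ell,\ell)$ into $L^p(-\ell,\ell)$. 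The only real obstacle in either route is bookkeeping the alternating impedances $a^{(-1)^j}$ as the operator $\mathbf{D}_a^{(m)}$ is unfolded; once this is tracked carefully, the argument is mechanical.
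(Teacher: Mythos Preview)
Your proposal is correct and follows essentially the same approach as the paper: both argue by induction on $m$, using Theorem~\ref{Th:RelationDarboux1} as the base case and reapplying it with impedance $a^{(-1)^m}$ at the inductive step to pass from $\mathbf{D}_a^{(m)}$ to $\mathbf{D}_a^{(m+1)}$. Your explicit remark that $1/a\in W^{1,\infty}(-\ell,\ell)$ with $(1/a)(0)=1$, so that Theorem~\ref{Th:RelationDarboux1} is available for each $a^{(-1)^m}$, is a useful clarification that the paper leaves implicit.
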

\begin{proof}
	Theorems \ref{Th:RelationDarboux1} and \ref{Th:transmutationop} imply that \eqref{eq:transmutationgeneralized} holds for $m=1,2$. We now prove the general case by induction on $m$. Suppose that \eqref{eq:transmutationgeneralized} is valid for $m\geq 1$. Given $u\in W^{m+2,p}(-\ell,\ell)$, by the induction hypothesis, $\mathbf{T}_au$ belongs to the domain of $\mathbf{D}_a^{(m)}$ and $\mathbf{D}_a^{(m)}\mathbf{T}_au=\mathbf{T}_{a^{(-1)^m}}D^mu\in W^{1,p}(-\ell,\ell)$, so we can apply  $(a^2)^{{(-1)}^m}D$  to obtain
	\begin{align*}
		\mathbf{D}_a^{m+1}\mathbf{T}_au &= (a^2)^{{(-1)}^m}D\mathbf{D}_a^m\mathbf{T}_au =(a^2)^{{(-1)}^m}D\mathbf{T}_{a^{(-1)^m}}D^mu
	\end{align*}
	Note that equality \eqref{eq:relationTaTdarboux} can be rewritten as $(a^2)^{{(-1)}^m}D\mathbf{T}_{a^{(-1)^m}}=\mathbf{T}_{a^{(-1)^{m+1}}}D$ on $W^{2,p}(-\ell,\ell)$. Since $D^mu\in W^{2,p}(-\ell,\ell)$, it follows that
	\[
	\mathbf{D}_a^{m+1}\mathbf{T}_au=\mathbf{T}_{a^{(-1)^{m+1}}}D^{m+1}
	\]
\end{proof}

\subsection{Relations between the kernels}
Transmutation relation \eqref{eq:relationTaTdarboux} induces certain differential relations between the transmutation kernel $K_a$ and the kernel $K_{\frac{1}{a}}$ associated with the Darboux equation.

\begin{theorem}\label{Th:kernelsrelations}
	The transmutation kernels $K_a(x,t)$ and $K_{\frac{1}{a}}(x,t)$ satisfy the following relations in $\mathcal{R}=\{(x,t)\in \mathbb{R}^2\, |\, -\ell\leq x\leq \ell,\; |t|\leq |x|\}$:
	\begin{equation}\label{eq:kernelsrelation}
		\partial_xK_{\frac{1}{a}}(x,t)= -a^2(x)\partial_tK_a(x,t)\quad\text{and}\quad \partial_xK_a(x,t)=-\frac{1}{a^2(x)}\partial_tK_{\frac{1}{a}}(x,t).
	\end{equation}
	
	Moreover, if $a\in C^1[-\ell,\ell]$, relations \eqref{eq:kernelsrelation} hold in the whole rectangle $\Omega$.
\end{theorem}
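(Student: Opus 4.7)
The strategy is to read off both pointwise identities from the operator relation $\mathbf{D}_a\mathbf{T}_au=\mathbf{T}_{1/a}Du$ of Theorem \ref{Th:RelationDarboux1} by expanding both sides with the integral representations and exploiting the arbitrariness of $u$. For any $u\in W^{2,p}(-\ell,\ell)$, formula \eqref{eq:derivativetransmutation} combined with $\mathbf{D}_a=a^2D$ gives
\[
\mathbf{D}_a\mathbf{T}_au(x)=a(x)u'(x)-a^2(x)\int_{-x}^{x}\partial_xK_a(x,t)\,u'(t)\,dt,
\]
while integrating by parts in the definition of $\mathbf{T}_{1/a}$ applied to $Du$, and using the Goursat data $K_{1/a}(x,x)=1-a(x)$ and $K_{1/a}(x,-x)=0$ (obtained from \eqref{eq:Goursatcondition} with $a$ replaced by $1/a$), yields
\[
\mathbf{T}_{1/a}Du(x)=a(x)u'(x)+\int_{-x}^{x}\partial_tK_{1/a}(x,t)\,u'(t)\,dt.
\]
Equating these and cancelling the common term $a(x)u'(x)$ leaves
\[
\int_{-x}^{x}\bigl[a^2(x)\partial_xK_a(x,t)+\partial_tK_{1/a}(x,t)\bigr]u'(t)\,dt=0\qquad\forall\,u\in W^{2,p}(-\ell,\ell).
\]

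Every $v\in W^{1,p}(-\ell,\ell)$ arises as $u'$ for some $u\in W^{2,p}(-\ell,\ell)$, and the restriction of $W^{1,p}(-\ell,\ell)$ to $(-x,x)$ is dense in $L^p(-x,x)$. Combined with the fact that the bracketed quantity is in $L^\infty(\Omega)$ by Theorem \ref{Th:integralrepresentation}, the fundamental lemma of the calculus of variations forces it to vanish a.e.\ on $\mathcal{R}$, giving $\partial_xK_a=-a^{-2}\partial_tK_{1/a}$ there. The companion identity $\partial_xK_{1/a}=-a^2\partial_tK_a$ on $\mathcal{R}$ follows by running the same argument with the roles of $a$ and $1/a$ swapped, starting from $\mathbf{D}_{1/a}\mathbf{T}_{1/a}u=\mathbf{T}_aDu$ and using the original Goursat data $K_a(x,x)=1-1/a(x)$, $K_a(x,-x)=0$.

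For the extension to the full rectangle $\Omega$ under the hypothesis $a\in C^1[-\ell,\ell]$, both kernels $K_a$ and $K_{1/a}$ are classical $C^1$ solutions on $\Omega$ of Goursat-type hyperbolic PDEs (as in \cite{mineimpedance1}). Setting $F:=a^2\partial_xK_a+\partial_tK_{1/a}$ and $G:=a^{-2}\partial_xK_{1/a}+\partial_tK_a$, continuity gives $F,G\in C(\Omega)$, and the first part of the proof yields $F=G=0$ on $\mathcal{R}$. The PDEs satisfied by $K_a$ and $K_{1/a}$, together with the boundary traces on $t=\pm x$, should imply that $F$ and $G$ themselves solve a homogeneous hyperbolic Goursat problem on $\Omega$ with zero data on the characteristic lines; uniqueness of that problem then forces $F\equiv G\equiv 0$ on $\Omega$. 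The main obstacle is this extension step: one must identify the precise hyperbolic equation governing $K_a$ and $K_{1/a}$ on $\Omega$ in the $C^1$ setting and verify that the linear combinations $F,G$ inherit a compatible homogeneous problem with vanishing Goursat data, so that the uniqueness argument applies globally on $\Omega$ rather than only on $\mathcal{R}$.
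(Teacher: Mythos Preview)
Your argument for the relations on $\mathcal{R}$ is correct and is essentially the paper's argument, with only cosmetic differences: the paper derives the first identity in \eqref{eq:kernelsrelation} directly from $D\mathbf{T}_{1/a}v=a^2\mathbf{T}_aDv$ and then remarks that the second follows by symmetry, whereas you derive the second first and then swap. The integration-by-parts and density reasoning are the same.

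The gap is in the extension to $\Omega$, and your own final paragraph identifies it. The route you sketch---showing that $F$ and $G$ solve a homogeneous Goursat problem with zero characteristic data---is not the paper's route and is not obviously viable: $K_a$ and $K_{1/a}$ satisfy \emph{second-order} hyperbolic equations, while $F$ and $G$ mix first derivatives of two different kernels, so there is no evident decoupled equation for $F$ or $G$ alone to which a uniqueness theorem would apply on the complementary region $\{|t|>|x|\}$.

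The paper's argument for the extension is completely different and avoids this difficulty. When $a\in C^1[-\ell,\ell]$, a result from \cite{mineimpedance1} gives the explicit representation
\[
\mathbf{T}_a^{-1}v(x)=v(x)-\int_{-x}^{x}K_{1/a}(t,x)\,v'(t)\,dt,
\]
i.e.\ the inverse operator uses the Darboux kernel with its arguments \emph{transposed}. Inverting the relation $\mathbf{D}_{1/a}\mathbf{T}_{1/a}v=\mathbf{T}_aDv$ yields $\mathbf{T}_a^{-1}\mathbf{D}_{1/a}v=D\mathbf{T}_{1/a}^{-1}v$, and expanding both sides with these transposed-kernel representations (using Leibniz, integration by parts, and the Goursat conditions exactly as in the first half) produces the identity $a^{-2}(t)\partial_tK_{1/a}(t,x)=-\partial_xK_a(t,x)$ for $|t|\leq|x|$, which after relabelling is precisely \eqref{eq:kernelsrelation} on the region $|x|\leq|t|\leq\ell$. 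Together with the first part this covers all of $\Omega$. The key idea you are missing is this transposed-kernel representation of the inverse, which is what carries the argument from $\mathcal{R}$ to its complement.
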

\begin{proof}
	It is enough to prove the first equality. Let $v\in C^2[-\ell,\ell]$. Using relations \eqref{eq:derivativetransmutation} and \eqref{eq:relationTaTdarboux}, we have
	\begin{align*}
		a^2(x)\mathbf{T}_aDv(x)& = D\mathbf{T}_{\frac{1}{a}}v(x)= a(x)v'(x)-\int_{-x}^{x}\partial_xK_{\frac{1}{a}}(x,t)v'(t)dt.
	\end{align*}
	Integration by parts on $\mathbf{T}_aDv(x)$ gives
	\begin{align*}
		\mathbf{T}_aDv(x) &= v'(x)-\int_{-x}^{x}K_a(x,t)v''(t)dt\\
		&= v'(x)-K_a(x,t)v'(t)\big{|}_{-x}^{x}+\int_{-x}^{x}\partial_tK_a(x,t)v'(t)dt\\
		&= \frac{v'(x)}{a(x)}+\int_{-x}^{x}\partial_tK_a(x,t)v'(t)dt,
	\end{align*}
	where the last equality follows from the Goursat conditions \eqref{eq:Goursatcondition}. Comparing both expression for $D\mathbf{T}_{\frac{1}{a}}v(x)=a^2(x)\mathbf{T}_aDv(x)$, we conclude that
	\begin{equation*}
		\int_{-x}^{x}a^2(x)\partial_tK_a(x,t)v'(t)dt=	-\int_{-x}^{x}\partial_xK_{\frac{1}{a}}(x,t)v'(t)dt.
	\end{equation*}
	This relation holds for every $v\in C^2[-\ell,\ell]$. Fix $x\in [-\ell,\ell]$, and let $\phi\in C_0^{\infty}(-\ell,\ell)$ be an arbitrary test function with support contained in $[-|x|,|x|]$. Taking $\Phi\in C^{\infty}[-\ell,\ell]$ with $\Phi'=\phi$, we obtain
	\begin{equation*}
		\int_{-x}^{x}a^2(x)\partial_tK_a(x,t)\phi(t)dt=-\int_{-x}^{x}\partial_xK_{\frac{1}{a}}(x,t)\phi(t)dt.
	\end{equation*}
	Since $\phi$ was arbitrary, it follows that $\partial_xK_{\frac{1}{a}}(x,t)=-a^2(x)\partial_tK_a(x,t)$ for all $|t|\leq |x|$. Because $x\in [-\ell,\ell]$ was arbitrary, we conclude that \eqref{eq:kernelsrelation} holds, as desired.
	
	Now, assume that $a\in C^1[-\ell,\ell]$.	Let $v\in C^2[-\ell,\ell]$. According to \eqref{eq:relationTaTdarboux}, we have $\mathbf{D}_{\frac{1}{a}}\mathbf{T}_{\frac{1}{a}}v=\mathbf{T}_aDv$. Since both sides of the equality belong to $W^{1,p}(-\ell,\ell)$, we can apply the inverses of $\mathbf{T}_a$ and $\mathbf{T}_{\frac{1}{a}}$ to obtain
	\[
	\mathbf{T}_a^{-1}\mathbf{D}_{\frac{1}{a}}v(x)= D\mathbf{T}_{\frac{1}{a}}^{-1}v(x),\quad -\ell\leq x\leq \ell.
	\]
	By \cite[Th. 30]{mineimpedance1}, since $a\in C^1[-\ell,\ell]$, the inverse $\mathbf{T}_a^{-1}$ can be represented as
	\begin{equation}\label{eq:inverseasmooth}
		\mathbf{T}_av(x)=v(x)-\int_{-x}^{x}K_{\frac{1}{a}}(t,x)v'(t)dt.
	\end{equation}
	Applying Leibniz's rule and using the Goursat conditions \eqref{eq:Goursatcondition}, we get
	\begin{equation}\label{eq:auxiliar6}
		D\mathbf{T}_{\frac{1}{a}}^{-1}v(x)= \frac{v'(x)}{a(x)}-\int_{-x}^{x}\partial_x(K_a(t,x))v'(t)dt.
	\end{equation}
	On the other hand, integration by parts and using the Goursat conditions again, we obtain
	\begin{align*}
		\mathbf{T}_a^{-1}\mathbf{D}_{\frac{1}{a}}v(x) & = \frac{v'(x)}{a^2(x)}-\int_{-x}^{x}K_{\frac{1}{a}}(t,x)\frac{d}{dt}\left(\frac{v'(t)}{a^2(t)}\right)dt\\
		&= \frac{v'(x)}{a^2(x)}-K_{\frac{1}{a}}(t,x)\frac{v'(t)}{a^2(t)}\bigg{|}_{-x}^{x}+\int_{-x}^{x}\partial_t(K_{\frac{1}{a}}(t,x))\frac{v'(t)}{a^2(t)}dt\\
		&= \frac{v'(x)}{a^2(x)}-(1-a(x))\frac{v'(x)}{a^2(x)}+\int_{-x}^{x}\partial_t(K_{\frac{1}{a}}(t,x))\frac{v'(t)}{a^2(t)}dt\\
		&= \frac{v'(x)}{a(x)}+\int_{-x}^{x}\partial_t(K_{\frac{1}{a}}(t,x))\frac{v'(t)}{a^2(t)}dt.
	\end{align*}
	Comparing this expression with \eqref{eq:auxiliar6} yields
	\begin{equation*}
		\int_{-x}^{x}\frac{\partial_t(K_{\frac{1}{a}}(t,x))}{a^2(t)}v'(t)dt=-\int_{-x}^{x}\partial_x(K_a(t,x))v'(t)dt,
	\end{equation*}
	for all $-\ell\leq x\leq \ell$ and $v\in C^2[-\ell,\ell]$. Applying a procedure analogous to the first part of the proof, we obtain
	\begin{equation*}
		\frac{\partial_t(K_{\frac{1}{a}}(t,x))}{a^2(t)}=-\partial_x(K_a(t,x))\quad \text{for } \;\;\; |t|\leq |x|\leq \ell.
	\end{equation*}
	A straightforward computation shows that this equation is equivalent to
	\[
	\frac{\partial_xK_{\frac{1}{a}}(x,t)}{a^2(x)}=-\partial_tK_a(x,t)\quad \text{for}\;\;\; |x|\leq |t|\leq \ell.
	\]
	Therefore, by \eqref{eq:kernelsrelation}, this relation holds throughout the entire rectangle $\Omega$.
\end{proof}

System \eqref{eq:kernelsrelation} is also known as a generalized hyperbolic Cauchy-Riemann system, and its solutions are referred to as $a^2$-hyperbolic analytic \cite{pakhareva}. Actually, this system can be reformulated in terms of the so-called {\bf hyperbolic main Vekua equation} \cite{kravchenkorochontrembley}. From this, it is possible to obtain a representation of $K_{\frac{1}{a}}$ in terms of $K_a$.

\begin{proposition}
	If $a\in C^1[-\ell,\ell]$, then the following relation holds:
	\begin{equation}\label{eq:integralrelationdarbouxkernel}
		K_{\frac{1}{a}}(x,t)=-\int_0^xa^2(\xi)\partial_{\zeta} K_a(\xi,0)d\xi-\int_0^ta^2(x)\partial_{\xi}K_a(x,\zeta)d\zeta,\quad (x,t)\in \Omega.
	\end{equation}
\end{proposition}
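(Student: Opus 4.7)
The plan is to recover $K_{\frac{1}{a}}$ from the PDE relations \eqref{eq:kernelsrelation} in Theorem \ref{Th:kernelsrelations} by integrating along a broken axis-parallel path from the origin to $(x,t)$. Under the assumption $a\in C^1[-\ell,\ell]$, Theorem \ref{Th:kernelsrelations} guarantees that \eqref{eq:kernelsrelation} is valid on the whole rectangle $\Omega$, and Theorem \ref{Th:integralrepresentation} gives $K_{\frac{1}{a}}\in W^{1,\infty}(\Omega)$, so (by choosing a Lipschitz representative) $K_{\frac{1}{a}}$ is absolutely continuous along each axis-parallel segment contained in $\Omega$, which is what is needed to apply the fundamental theorem of calculus in one variable at a time.

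First I would pin down the value at the origin: using the Goursat condition $K_{\frac{1}{a}}(x,x)=1-a(x)$ (which is the analogue of \eqref{eq:Goursatcondition} for the Darboux impedance) together with the normalization $a(0)=1$, one gets $K_{\frac{1}{a}}(0,0)=0$. Next, along the horizontal segment from $(0,0)$ to $(x,0)$, I specialize the first identity of \eqref{eq:kernelsrelation} at $t=0$ to obtain $\partial_\xi K_{\frac{1}{a}}(\xi,0)=-a^2(\xi)\,\partial_\zeta K_a(\xi,0)$ and integrate in $\xi$ from $0$ to $x$, which yields
\begin{equation*}
K_{\frac{1}{a}}(x,0)=-\int_{0}^{x}a^{2}(\xi)\,\partial_\zeta K_a(\xi,0)\,d\xi.
\end{equation*}

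Then, along the vertical segment from $(x,0)$ to $(x,t)$, I rearrange the second identity of \eqref{eq:kernelsrelation} into $\partial_\zeta K_{\frac{1}{a}}(x,\zeta)=-a^2(x)\,\partial_\xi K_a(x,\zeta)$ and integrate in $\zeta$ from $0$ to $t$, obtaining
\begin{equation*}
K_{\frac{1}{a}}(x,t)-K_{\frac{1}{a}}(x,0)=-\int_{0}^{t}a^{2}(x)\,\partial_\xi K_a(x,\zeta)\,d\zeta.
\end{equation*}
Summing these two relations and using $K_{\frac{1}{a}}(0,0)=0$ gives exactly formula \eqref{eq:integralrelationdarbouxkernel}.

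The only subtle point—and the part that most needs attention—is the justification of the two univariate FTC steps, because $K_{\frac{1}{a}}$ is only known to lie in $W^{1,\infty}(\Omega)$ and its partial derivatives are a priori defined only almost everywhere on $\Omega$. This is why the $C^1$ hypothesis on $a$ is being invoked: it is precisely what extends the validity of \eqref{eq:kernelsrelation} from the triangle $\mathcal{R}$ to all of $\Omega$ (so that the vertical integration from $(x,0)$ to $(x,t)$ makes sense even when $|t|>|x|$), and together with the Lipschitz character of $K_{\frac{1}{a}}$ it lets us choose a representative that is absolutely continuous along the horizontal line $\{t=0\}$ and along each vertical line $\{x=\mathrm{const}\}$, so that the one-dimensional FTC applies to the chosen broken path. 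Once this is in place, the computation is an immediate integration.
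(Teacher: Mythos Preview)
Your argument follows the same broken-path integration idea that the paper uses, and the computation itself is correct. The paper, however, organizes the regularity issue differently: it first assumes $a\in C^2[-\ell,\ell]$, uses that then $K_a\in C^2(\overline{\Omega})$ and satisfies the wave-type equation $\partial_x(a^2\partial_xK_a)=a^2\partial_t^2K_a$, checks that the vector field $(P,Q)=\bigl(-a^2\partial_tK_a,\,-a^2\partial_xK_a\bigr)$ is closed (hence conservative on the simply connected $\Omega$), and obtains \eqref{eq:integralrelationdarbouxkernel} as a classical line integral along the same path $(0,0)\to(x,0)\to(x,t)$. It then passes from $C^2$ to $C^1$ by approximation: writing $q=2a'/a$, taking $q_n\in C^2$ with $q_n\to q$ uniformly, and invoking the result that $K_{a_n}\to K_a$ uniformly on $\Omega$ together with their first derivatives, so that one may take the limit inside the integral formula.

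Your route is more direct because it avoids the $C^2$ detour and the approximation step, relying instead on the $C^1$ part of Theorem \ref{Th:kernelsrelations} to have \eqref{eq:kernelsrelation} on all of $\Omega$. But the justification you give for the ``subtle point'' does not quite close the argument. Knowing that $K_{\frac{1}{a}}\in W^{1,\infty}(\Omega)$ guarantees a Lipschitz representative that is absolutely continuous along each axis-parallel segment; what it does \emph{not} give, by itself, is that the one-variable derivative of the restriction to $\{t=0\}$ (or to a fixed vertical line) coincides with the restriction of the right-hand side of \eqref{eq:kernelsrelation}, because that identity is only asserted a.e.\ on $\Omega$ and you are restricting to a set of planar measure zero. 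Note that the very statement of \eqref{eq:integralrelationdarbouxkernel} involves $\partial_\zeta K_a(\xi,0)$, which is a trace on a null set and is ill-defined if one only knows $\partial_tK_a\in L^\infty(\Omega)$. The clean fix is to observe that for $a\in C^1[-\ell,\ell]$ one actually has $K_a,K_{\frac{1}{a}}\in C^1(\overline{\Omega})$---this is precisely what the paper's approximation step delivers (uniform convergence of the first derivatives of $K_{a_n}$ to those of $K_a$). Once the partials are continuous, \eqref{eq:kernelsrelation} holds pointwise and both FTC steps along your broken path are classical. Alternatively, you could run your argument for a.e.\ $x$ and a.e.\ $t$ via Fubini and then conclude for all $(x,t)$ by continuity of both sides, but you would still need to make sense of $\partial_\zeta K_a(\xi,0)$ in the final formula.
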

\begin{proof}
	First, assume that $a\in C^2[-\ell,\ell]$. The system \eqref{eq:kernelsrelation} can be written as
	\[
	\nabla K_{\frac{1}{a}}(x,t)=(P(x,t),Q(x,t)) \text{ where  }\; P(x,t)=-a^2(x)\partial_tK_a(x,t), \; Q(x,t)=-a^2(x)\partial_xK_a(x,t).
	\]
	Since $a\in C^2[-\ell,\ell]$, according to \cite[Prop. 2]{mineimpedance1}, the kernel $K_a\in C^2(\overline{\Omega})$ and satisfies the equation $\partial_x(a^2(x)\partial_xK_a)=a^2(x)\partial_t^2K_a$ in $\Omega$. Hence
	\[
	\partial_xQ-\partial_tP= -\partial_x(a^2(x)\partial_xK_a)+a^2(x)\partial_t^2K_a=0.
	\]
	Since $\Omega$ is simply connected, the vector field $(P,Q)$ is conservative, and $K_{\frac{1}{a}}$ can be recovered as
	\[
	K_a(x,t)=\int_{\Gamma}\left\{-a^2(\xi)\partial_{\zeta}K_a(\xi,\zeta)d\xi-a^2(\xi)\partial_{\xi}K_a(\xi,\zeta)d\zeta \right\}+c
	\]
	where $c$ is a constant and $\Gamma$ is a rectifiable path joining  $(0,0)$ with $(x,t)$. Taking the polygonal path from $(0,0)$ to $(x,0)$ and then to $(x,t)$, and using that  $K_a(0,0)=0$, we obtain \eqref{eq:integralrelationdarbouxkernel}.
	
	For the case $a\in C^1[-\ell,\ell]$, consider the function $q=\frac{2a'}{a}\in C[-\ell,\ell]$. Take a sequence $\{q_n\}_{n=0}^{\infty}\in C^2[-\ell,\ell]$ such that $q_n\rightarrow q$ uniformly on $[-\ell,\ell]$. Define $a_n(x)=e^{-\frac{1}{2}\int_0^xq_n(s)ds}$. By \cite[Th. 6]{mineimpedance1}, the corresponding kernels $K_{a_n}$ converge uniformly on $\Omega$ together with their first derivatives to $K_a$.  Passing to the limit in the integral formula yields \eqref{eq:integralrelationdarbouxkernel}.
\end{proof}

\section{Transmutation operator for $a\in W^{1,2}$}

First, assume that $a\in W^{1,1}(0,\ell)$. By \eqref{eq:initialconditionssole}-\eqref{eq:conditionssine}, $e_a(\rho,x)=C_a(\rho,x)+i\rho S_a(\rho,x)$. According to \cite[Th. 22]{carroll} and \cite[Prop. 1]{mineimpedance1}, for fixed $x\in (0,\ell]$, the solutions $C_a(\rho,x)$ and $S_a(\rho,x)$ satisfy the following estimates for all $\rho\in \mathbb{C}$, :
\[
|C_a(\rho,x)-\cos(\rho x)|\leq \frac{c|\rho|xe^{|\operatorname{Im}\rho|x}}{1+|\rho|x}e^{Q_1(x)},\quad \left|S_a(\rho,x)-\frac{\sin(\rho x)}{\rho}\right|\leq \frac{cxe^{|\operatorname{Im}\rho|x}}{1+|\rho|x}e^{Q_1(x)}
\]
where $Q_1(x):=\int_0^x|q(t)|dt$ and $c$ is a constant independent of $x$ and $\rho$. From these estimates, we obtain 
\[
|e_a(\rho,x)-e^{i\rho x}|\leq \frac{2c|\rho|xe^{|\operatorname{Im}\rho|x}}{1+|\rho|x}e^{Q_1(x)} \quad \forall \rho \in \mathbb{C}.
\]
Consider the function $\varphi_x(\rho):=\frac{e^{i\rho x}-e_a(\rho,x)}{i\rho}$. Hence $\varphi_x\in \operatorname{Hol}(\mathbb{C})$ and is of exponential type $x$. Moreover, the following estimate holds:
\[
|\varphi_x(\rho)|\leq \frac{cxe^{|\operatorname{Im}\rho|x}}{1+|\rho|x}e^{Q_1(x)},
\]
from which we deduce that $\varphi_x|_{\mathbb{R}}\in L^2(\mathbb{R})$. By the Paley-Wiener theorem \cite[Th. 19.3]{rudin}, it follows that $\varphi_x(\rho)=\int_{-x}^xk_x(t)e^{i\rho t}dt$, for some $k_x\in L^2(-x,x)$. Defining $K_a(x,t):=k_x(t)$, we obtain the representation \eqref{eq:integralsole}. The kernel $K_a(x,t)$ is defined on the triangle $\mathcal{T}:=\{(x,t)\in \mathbb{R}^2\, |\, |t|\leq x\leq \ell\}$. Furthermore, by Plancherel's theorem
\begin{align*}
	\int_{-x}^{x}|K_a(x,t)|^2dt & = \frac{1}{2\pi}\int_{-\infty}^{\infty}|\varphi_x(\rho)|^2d\rho \\
	& \leq \frac{c^2x^2e^{2\|q\|_{L^1(0,\ell)}}}{2\pi}\int_{-\infty}^{\infty}\frac{d\rho}{(1+|\rho|x)^2}\\
	&= \frac{c^2xe^{2\|q\|_{L^1(0,\ell)}}}{\pi}\int_0^{\infty}\frac{dt}{(1+t)^2}\\
	& \leq \frac{c^2\ell e^{2\|q\|_{L^1(0,\ell)}}I}{\pi},
\end{align*}
where $I=\int_0^{\infty}\frac{dt}{(1+t)^2}$. Hence
\begin{align*}
	\iint_{\mathcal{T}}|K_a(x,t)|^2dtdx = \int_0^{\ell}\int_{-x}^{x}|K_a(x,t)|^2dtdx\leq \frac{c^2\ell^2e^{2\|q\|_{L^1(0,\ell)}}I}{\pi}<\infty.
\end{align*}
In summary:
\begin{proposition}
	If $a\in W^{1,1}(0,\ell)$, the solution $e_a(\rho,x)$ admits the representation \eqref{eq:integralsole} for all $x\in (0,\ell]$, with the integral kernel $K_a\in L^2(\mathcal{T})$. 
\end{proposition}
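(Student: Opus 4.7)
The plan would be to recover the integral representation and the $L^2$-property of the kernel by combining pointwise spectral estimates for $C_a$ and $S_a$ with the classical Paley-Wiener theorem, slicing in $x$ and then integrating.

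First, I would take as input the known bounds
\begin{align*}
|C_a(\rho,x) - \cos(\rho x)| &\leq \frac{c|\rho|x\,e^{|\operatorname{Im}\rho|x}}{1+|\rho|x}\,e^{Q_1(x)}, \\
\Bigl|S_a(\rho,x) - \tfrac{\sin(\rho x)}{\rho}\Bigr| &\leq \frac{cx\,e^{|\operatorname{Im}\rho|x}}{1+|\rho|x}\,e^{Q_1(x)},
\end{align*}
valid for $a\in W^{1,1}(0,\ell)$, as supplied by \cite{carroll,mineimpedance1}. Using the decomposition $e_a(\rho,x) = C_a(\rho,x) + i\rho S_a(\rho,x)$ and $e^{i\rho x}=\cos(\rho x)+i\sin(\rho x)$, a direct addition produces a bound of the same shape for $|e_a(\rho,x) - e^{i\rho x}|$, with constant $2c$.

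Next, for each fixed $x\in (0,\ell]$ I would introduce $\varphi_x(\rho) := (e^{i\rho x}-e_a(\rho,x))/(i\rho)$. Since $e_a(0,x)=1=e^{0}$, the apparent singularity at $\rho=0$ is removable, and $\varphi_x\in\operatorname{Hol}(\mathbb{C})$. The estimate above yields that $\varphi_x$ is of exponential type at most $x$, and on the real line satisfies
\begin{equation*}
|\varphi_x(\rho)| \leq \frac{cx\,e^{Q_1(x)}}{1+|\rho|x},\qquad \rho\in\mathbb{R},
\end{equation*}
which is square-integrable. The Paley-Wiener theorem then produces a unique $k_x\in L^2(-x,x)$ such that $\varphi_x(\rho) = \int_{-x}^{x}k_x(t)e^{i\rho t}\,dt$, and setting $K_a(x,\cdot):=k_x$ recovers representation \eqref{eq:integralsole}.

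To conclude that $K_a\in L^2(\mathcal{T})$, I would apply Plancherel's identity slicewise to obtain
\begin{equation*}
\int_{-x}^{x}|K_a(x,t)|^2\,dt \;=\; \frac{1}{2\pi}\int_{\mathbb{R}}|\varphi_x(\rho)|^2\,d\rho \;\leq\; M\,x,
\end{equation*}
with $M$ depending only on $\ell$ and $\|q\|_{L^1(0,\ell)}$ (here $q=2a'/a$), and then integrate in $x\in(0,\ell)$ via Fubini's theorem. The main obstacle is not the Paley-Wiener construction itself, which is essentially mechanical once exponential-type control is in hand, but rather justifying the input estimates on $C_a$ and $S_a$ under the weak hypothesis $a\in W^{1,1}(0,\ell)$; this is precisely where the integrable-potential analogues in \cite{carroll,mineimpedance1} are needed, since $q$ is only required to be $L^1$.
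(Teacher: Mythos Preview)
Your proposal is correct and follows essentially the same route as the paper: the same input estimates on $C_a$ and $S_a$, the same entire function $\varphi_x$, the Paley--Wiener theorem to produce $K_a(x,\cdot)\in L^2(-x,x)$, and Plancherel plus integration in $x$ to obtain $K_a\in L^2(\mathcal{T})$. Your explicit remark that $e_a(0,x)=1$ removes the singularity at $\rho=0$ is a detail the paper leaves implicit, and your slicewise bound $\int_{-x}^x|K_a(x,t)|^2\,dt\leq Mx$ matches the paper's computation before it crudely replaces $x$ by $\ell$.
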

In particular, the result is valid for $a\in W^{1,2}(0,\ell)$. In this case, we consider the operator $\mathbf{T}_a$ given by \eqref{eq:transmutationoperator}, acting from $W^{1,2}(-\ell,\ell)$ into $L^2(0,\ell)$. Since $K_a\in L^2(\mathcal{T})$, it follows that $\mathbf{T}_a\in \mathcal{B}(W^{1,2}(-\ell,\ell),L^2(0,\ell))$. Furthermore, due to the uniform convergence of the Taylor series of $e^{i\rho x}$, repeating the proof of Proposition \ref{Prop:mappingproperty} shows that the mapping property \eqref{eq:mappingproperty} continues to hold.

\begin{theorem}
	Suppose that $a\in W^{1,2}(0,\ell)$. Then the following statements hold:
	\begin{itemize}
		\item[(i)] Relation \eqref{eq:relationTaTdarboux} is valid for $u\in W^{2,2}(-\ell,\ell)$.
		\item[(ii)] The transmutation property \eqref{eq:transmutationproperty} is valid for $u\in W^{3,2}(-\ell,\ell)$. 
	\end{itemize}
\end{theorem}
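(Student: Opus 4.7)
The plan is a polynomial approximation argument combined with a closedness property of $\mathbf{D}_a$, in the spirit of the second proof of Theorem \ref{Th:transmutationop}, but now avoiding any use of pointwise differentiation formulas for the kernel $K_a$ (which we no longer have when $a$ is merely in $W^{1,2}$). The key tools are the mapping property $\mathbf{T}_a[x^k]=\varphi_a^{(k)}$ (which, as noted preceding the theorem, still holds in this setting), the identity $\mathbf{D}_a\varphi_a^{(k)}=k\varphi_{1/a}^{(k-1)}$ from \eqref{eq:derivativeformalpower}, and the boundedness $\mathbf{T}_a,\mathbf{T}_{1/a}\in \mathcal{B}(W^{1,2}(-\ell,\ell),L^2(0,\ell))$. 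Note that $1/a\in W^{1,2}(0,\ell)$ is automatic: by the Sobolev embedding $W^{1,2}(0,\ell)\hookrightarrow C[0,\ell]$, $a$ is continuous and positive, hence bounded away from $0$ and $\infty$ on $[0,\ell]$, so $(1/a)'=-a'/a^2\in L^2$.

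For part (i), I would first verify \eqref{eq:relationTaTdarboux} on monomials: by the mapping property and \eqref{eq:derivativeformalpower},
\[
\mathbf{D}_a\mathbf{T}_a[x^k]=\mathbf{D}_a\varphi_a^{(k)}=k\varphi_{1/a}^{(k-1)}=k\mathbf{T}_{1/a}[x^{k-1}]=\mathbf{T}_{1/a}D[x^k],
\]
and by linearity it extends to every $p\in\Pi[-\ell,\ell]$. Then, given $u\in W^{2,2}(-\ell,\ell)$, Corollary \ref{coro:polynomialaprox} yields polynomials $p_n\to u$ in $W^{2,2}(-\ell,\ell)$. The boundedness of $\mathbf{T}_a$ and $\mathbf{T}_{1/a}$ from $W^{1,2}(-\ell,\ell)$ into $L^2(0,\ell)$ gives $\mathbf{T}_ap_n\to \mathbf{T}_au$ and $\mathbf{T}_{1/a}Dp_n\to \mathbf{T}_{1/a}Du$ in $L^2(0,\ell)$. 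Consequently $\mathbf{D}_a\mathbf{T}_ap_n=\mathbf{T}_{1/a}Dp_n\to \mathbf{T}_{1/a}Du$ in $L^2(0,\ell)$.

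The crucial step is the closedness of $\mathbf{D}_a=a^2D$ on $L^2(0,\ell)$: since $a$ and $1/a$ lie in $C[0,\ell]\cap L^\infty(0,\ell)$, convergence in $L^2(0,\ell)$ implies convergence of the distributional derivatives, and multiplication by the continuous function $a^2$ preserves distributional convergence. Thus from $\mathbf{T}_ap_n\to\mathbf{T}_au$ and $\mathbf{D}_a\mathbf{T}_ap_n\to \mathbf{T}_{1/a}Du$ in $L^2(0,\ell)$, closedness gives $\mathbf{T}_au\in\mathscr{D}_2(\mathbf{D}_a)$ and $\mathbf{D}_a\mathbf{T}_au=\mathbf{T}_{1/a}Du$, proving (i). I expect this closedness argument to be the main obstacle, as it is the substitute for the pointwise Leibniz computation used in the $W^{1,\infty}$ case.

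For part (ii) I would simply iterate (i), as in the second proof of Theorem \ref{Th:transmutationop}. Given $u\in W^{3,2}(-\ell,\ell)$, we have $Du\in W^{2,2}(-\ell,\ell)$, so (i) applied with the roles of $a$ and $1/a$ swapped yields $\mathbf{D}_{1/a}\mathbf{T}_{1/a}Du=\mathbf{T}_aD^2u$. Combined with (i) applied to $u$, which gives $\mathbf{D}_a\mathbf{T}_au=\mathbf{T}_{1/a}Du\in W^{1,2}(0,\ell)\subset \mathscr{D}_2(\mathbf{D}_{1/a})$, we obtain
\[
\mathbf{L}_a\mathbf{T}_au=\mathbf{D}_{1/a}(\mathbf{D}_a\mathbf{T}_au)=\mathbf{D}_{1/a}\mathbf{T}_{1/a}Du=\mathbf{T}_aD^2u,
\]
which is \eqref{eq:transmutationproperty}.
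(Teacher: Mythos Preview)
Your argument is correct and follows the same overall strategy as the paper: verify the identity on polynomials via the mapping property and \eqref{eq:derivativeformalpower}, then extend by density using Corollary \ref{coro:polynomialaprox}. The only difference lies in how the limiting step is carried out. You pass to the limit directly in the relation $\mathbf{D}_a\mathbf{T}_ap_n=\mathbf{T}_{1/a}Dp_n$ and invoke closedness of $\mathbf{D}_a$ on $L^2(0,\ell)$ (which indeed follows from $a,a^{-1}\in L^\infty$ and the closedness of $D$). The paper instead integrates first, rewriting the polynomial identity as $\mathbf{T}_ap=\mathbf{J}_a\mathbf{T}_{1/a}p'+p(0)$, and then passes to the limit in this form, where every operator involved ($\mathbf{T}_a$, $\mathbf{T}_{1/a}$, $\mathbf{J}_a$) is bounded; differentiating the limiting identity recovers \eqref{eq:relationTaTdarboux}. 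The paper's integrated version sidesteps the need to state and verify closedness explicitly, while your closedness argument is equally valid and arguably more direct once that property is granted. For part (ii), both you and the paper proceed by iterating (i) exactly as in the second proof of Theorem \ref{Th:transmutationop}.
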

\begin{proof}
	It suffices to prove (i) (the property (ii) follows from (i) y the same arguments as in the second proof of Theorem \ref{Th:transmutationop}). From the relations \eqref{eq:mappingproperty} and  \eqref{eq:derivativeformalpower}, together with the linearity of $\mathbf{T}_a$, we have $\mathbf{D}_a\mathbf{T}_ap=\mathbf{T}_{\frac{1}{a}}p'$ for $p\in \Pi[-\ell,\ell]$, and since $\mathbf{T}_ap(0)=p(0)$, it follows that
	\[
	\mathbf{T}_ap=\mathbf{J}_a\mathbf{T}_{\frac{1}{a}}p'+p(0)\qquad \forall p\in \Pi[-\ell,\ell].
	\]
	Let $u\in W^{2,2}(-\ell,\ell)$. By Corollary \ref{coro:polynomialaprox}, there exists a sequence $\{p_n\}_{n=0}^{\infty}\subset \Pi[-\ell,\ell]$ such that $p_n^{(j)}\rightarrow u^{(j)}$ uniformly on $[-\ell,\ell]$ for $j=0,1$, and $p_n''\rightarrow u''$ in $L^2(-\ell,\ell)$. Since $p_n\rightarrow u$ in $W^{1,2}(-\ell,\ell)$, by continuity of $\mathbf{T}_a$ it follows that  $\mathbf{T}_ap_n\rightarrow \mathbf{T}_au$ in $L^2(0,\ell)$. On the other hand, $p_n'\rightarrow u'$ in $W^{1,2}(-\ell,\ell)$, so $\mathbf{J}_a \mathbf{T}_{\frac{1}{a}}p_n'\rightarrow \mathbf{J}_a\mathbf{T}_{\frac{1}{a}}u$, and since $p_n(0)\rightarrow u(0)$, we conclude that $\mathbf{T}_au=\mathbf{J}_a\mathbf{T}_{\frac{1}{a}}u'+u(0)$, from where we obtain \eqref{eq:relationTaTdarboux}.
\end{proof}

\section{Conclusions}

The construction of a complete system of functions, called formal powers, associated with the Sturm-Liouville equation with an integrable impedance, is presented. We establish the relationship between the formal powers corresponding to the original Sturm-Liouville equation and those of its Darboux-transformed counterpart. Moreover, approximation results via generalized Taylor polynomials and completeness in $L^p$ and Sobolev spaces are proved.

The completeness of the formal powers enables us to derive key properties for a pair of transmutation operators connecting the Sturm-Liouville equation and its Darboux transform, including the existence of continuous inverses. These results provide powerful tools for the analytical representation of solutions to the Sturm-Liouville equation, as well as for addressing direct and inverse spectral boundary value problems. This framework is exemplified in the case of equations such as the Schrödinger equation in \cite{kravchenkolibro,NSBF1, kravchenkotorbainverse}  
\newline

{\bf Acknowlegdments:}
The author thanks to Instituto de Matemáticas de la U.N.A.M. Unidad Querétaro (México), where this
work was developed, and the SECIHTI for their support through the program {\it Estancias Posdoctorales por
	México Convocatoria 2023 (I)}.
\newline

{\bf Conflict of interest:} This work does not have any conflict of interest.



\begin{thebibliography}{99}
	\bibitem{albeverio}  \textsc{S. Albeverio, R. Hryniv, Ya. Mykytyuk}, \textit{Inverse spectral problems for SturmLiouville operators in impedance form} , Journal of Functional Analysis 222 (2005), 143-177.
	
	
	\bibitem{benewitz} \textsc{C. Bennewitz, M. Brown, R. Weikard}, \textit{Spectral and Scattering Theory for Ordinary Differential Equations Vol. I: Sturm-Liouville Equations}, Springer, 2020.
	
	\bibitem{brezis} \textsc{H. Brezis}, \textit{Functional Analysis, Sobolev Spaces and Partial Differential Equations}, 1st. Edition, Springer, 2010.
	
	\bibitem{sppscampos} \textsc{H. Blancarte, H. Campos, K. V. Khmelnytskaya}, \textit{Spectral parameter power series for discontinuous coefficients}, Math. Meth. in the Appl. Sci. 38(10) (2015), 2000-2011.
	
	\bibitem{camposstandard} \textsc{H. Campos}, \textit{Standard transmutation operators for the one dimensional Schr\"odinger operator with a locally integrable potential}, J. Math. Anal. Appl. 453(1) (2017), 64-81.
	
	\bibitem{camposlbases} \textsc{H. Campos, V. V. Kravchenko, S. M. Torba}, \textit{Transmutations, L-bases and
		complete families of solutions of the stationary Schrödinger equation in the plane},
	J. Math. Anal. Appl. 389 (2012), no. 2, 1222-1238.
	
	\bibitem{carroll} \textsc{R. Carroll, F. Santosa}, \textit{Scattering Techniques for a One Dimensional Inverse Problem in Geophysics}, Math. Meth. in the Appl. Sci. 3
	(1981) 145–171.
	
	
	\bibitem{fage} \textsc{M. K. Fage, N. I. Nagnibida}, \textit{The Problem of the Equivalence of Ordinary Linear Differential Operators}. Nauka, Novosibirsk, USSR, 1987 (in Russian).
	
	\bibitem {folland}\textsc{G. B. Folland}, \textit{Real Analysis, Modern Techniques and Their Applications}, 2nd. Edition, New-York: Wiley, 1999.
	
	\bibitem{kravchenkocompleteness} \textsc{V. V. Kravchenko}, \textit{ On the completeness of systems of recursive integrals}, Communications in Mathematical Analysis, Conf. 03 (2011), 172–176.
	
	\bibitem{kravchenkolibro} \textsc{V. V. Kravchenko}, \textit{Direct and Inverse Sturm-Liouville Problems: A Method of Solution, Birkhäuser}, Series: Frontiers in Mathematics, 2020.
	
	\bibitem{josafath} \textsc{V. V. Kravchenko, J. A. Otero, S. M. Torba}, \textit{Analytic approximation of solutions of parabolic partial differential equations with variable coefficients}, Advances in Mathematical Physics (2017) v. 2017, Article ID 2947275, 5 pages, doi:10.1155/2017/2947275
	
	\bibitem{NSBF1} \textsc{V. V. Kravchenko, L.J. Navarro, S.M. Torba}, \textit{Representation of solutions to the one-dimensional Schrödinger equation in terms of Neumann
		series of Bessel functions.} Appl. Math. Comput. 314(1) (2017) 173–192.
	
	\bibitem{spps} \textsc{V. V. Kravchenko, R. M. Porter}, \textit{Spectral parameter power series for Sturm-Liouville problems}, Math. Methods Appl. Sci. 33 (2010)
	459–468.
	
	\bibitem{kravchenkorochontrembley} \textsc{V. V. Kravchenko, D. Rochon, S. Tremblay}, \textit{On the Klein-Gordon equation and hyperbolic pseudoanalytic function theory}, J. Phys. A: Math. Gen. 41 (2008) 065205, (18pp.).
	
	\bibitem{KravTremblay1} \textsc{V. V. Kravchenko,  S. Morelos,  S. Tremblay}, \textit{Complete systems of recursive integrals and Taylor series for solutions of Sturm-Liouville equations}. Mathematical Methods in the Applied Sciences (2012), v. 35, issue 6, 704–715
	
	\bibitem{kravchenkotorba1} \textsc{V. V. Kravchenko, S. M. Torba}, \textit{Analytic approximation of transmutation operators and related systems of functions}, Bolet\'in de la Sociedad Matem\'atica Mexicana, 22 (2016), 389-429.
	
	\bibitem{kravchenkotorba2} \textsc{V. V. Kravchenko, S. M. Torba}, \textit{Trasnmutations for Darboux transformed
		operators with applications}, J. Phys. A: Math. Theor. 45 (2012), no. 075201 (21
	pp.).
	
	\bibitem{kravchenkotorbahyperbolic} \textsc{V. V. Kravchenko, S. M. Torba}, \textit{Construction of transmutation operators and
		hyperbolic pseudoanalytic functions}, Complex Anal. Oper. Theory 9 (2015), 389-429.
	
	\bibitem{kravchenkotorbainverse} \textsc{V. V. Kravchenko, S. M. Torba}, \textit{A direct method for solving inverse Sturm-Liouville problems}, Inverse Problems 37 (2021) 015015
	
	\bibitem{mineimpedance1} \textsc{V. V. Kravchenko, V. A. Vicente-Ben\'{\i}tez}, \textit{Transmutation operators method for Sturm-Liouville equations in impedance form I: construction and analytical properties}, J Math Sci (2022) 266: 103-132. Doi: 10.1007/s10958-022-05875-z
	
	\bibitem{minedelta} \textsc{V. V. Kravchenko, V. A. Vicente-Ben\'{\i}tez},\textit{ Schr\"odinger equation with finitely many $\delta$-interactions: closed form, integral and series representations for the solutions}, Anal. Math. Phys. 14, 97 (2024) https://doi.org/10.1007/s13324-024-00957-4.
	
	\bibitem{kolmogorov} \textsc{A. N. Kolmogorov, S. V. Fomin}, \textit{Elements of the Theory of Functions and
		Functional Analysis, Vol. 2}, New York: Graylock Press, 1975
	
	\bibitem{marchenko} \textsc{V. A. Marchenko}, \textit{Sturm-Liouville operators and applications}, Birkh\"auser, Basel, 1986.
	
	\bibitem{miklavic} \textsc{M. Miklavcic}, \textit{Applied Functional Analysis and Partial Differential Equations},
	World Scientific, Singapore, 1998.
	
	\bibitem{tremblay} \textsc{M. Ouellet, S. Tremblay}, \textit{Supersymmetric generalized power functions}, J. Math. Phys. 61, 072101 (2020).
	
	\bibitem{pakhareva} \textsc{N. A. Pakhareva, I. N. Aleksandrovich}, \textit{Representation of $p_1(x)p_2(y)$-wave functions by a linear
		combination of wave functions and their derivatives} (Russian), Dokl. Akad. Nauk Ukrain. SSR Ser. A
	1983 no. 1, 23–27.
	
	\bibitem{povzner} \textsc{A. Ya. Povzner}, \textit{On differential equations of Sturm-Liouville type on a half-line}, Mat. Sb. 23, issue 1, 3-52, 1948 (in Russian).
	
	\bibitem{pruvnikov} \textsc{A. P. Prudnikov, Yu. A. Brychkov, O. I. Marichev}, \textit{Integrals and Series. vol. 2. Special Functions} (Gordon \& Breach Science Publishers,
	New York, 1986)
	
	\bibitem{rudin} \textsc{W. Rudin}, \textit{Real and Complex Analysis}, 3rd. Edition, New-York: McGraw Hill, 1980.
	
	\bibitem{shkalikov} \textsc{M. A. Savchuk, A. A.  Shkalikov}, \textit{ Sturm-Liouville operators with singular potentials},  Math. Notes 66, 741–753 (1999).
	
	\bibitem{minerunge} \textsc{V. V. Kravchenko, V. A. Vicente-Ben\'itez}, \textit{Runge property and approx-
		imation by complete systems of solutions for strongly elliptic equations}, Complex Variables and Elliptic Equations, 2022, vol. 67, No. 3, 661–682 DOI:
	10.1080/17476933.2021.1955870
	
	\bibitem{minevekua1} \textsc{V. A. Vicente-Ben\'itez}, \textit{Transmutation operators and complete systems of solutions for the radial Bicomplex Vekua equation},J. Math. Anal. Appl. 536 (2024).\\  https://doi.org/10.1016/j.jmaa.2024.128224
	
	\bibitem{vladimirov} \textsc{V. S. Vladimirov}, \textit{Equations of Mathematical Physics}, Marcel Dekker, New York,
	1971.
\end{thebibliography}
\end{document}